\documentclass[11pt]{amsart}
\usepackage{}
\usepackage{xcolor}
\usepackage{amssymb}
\usepackage{amsmath}
\usepackage{times}

\newcommand{\s}{\sigma}
\newcommand{\tl}{\tilde{\lambda}}
\newcommand{\la}{\lambda}
\newcommand{\de}{\delta}

\newtheorem{theorem}{Theorem}[section]
\newtheorem{lemma}[theorem]{Lemma}

\newtheorem{corollary}[theorem]{Corollary}

 \theoremstyle{definition}

\theoremstyle{remark}
\newtheorem{remark}[theorem]{Remark}

\numberwithin{equation}{section}

\begin{document}

\title[sums of Hessian operators]
{Pogorelov interior estimates for sum-of-Hessians equations}


\begin{abstract}
	In this paper, we develop a new approach to sum-of-Hessians equations involving multiple $k$-Hessian operators of distinct orders. By exploiting the concavity of sums of Hessian operators, we derive Pogorelov estimates for the corresponding equations under the dynamic semi-convexity condition. Moreover, when the highest order is $n-1$ or $n$, we establish such estimates for admissible solutions without relying on this condition, and they are therefore optimal. As an application, when the right-hand side is identically $1$, we prove that every entire admissible solution in $\mathbb{R}^n$ with quadratic growth is necessarily a quadratic polynomial.

\emph{Mathematical Subject Classification (2020):} 35J15, 35B45, 35B08.

\emph{Keywords:} Pogorelov estimates; Hessian equations; Entire solutions; Liouville.

\end{abstract}

\author{Weisong Dong}
\address{School of Mathematics, Tianjin University,
	Tianjin, 300354, China}
\email{dr.dong@tju.edu.cn}

\author{Sirui Xu}
\address{School of Mathematics, Tianjin University,
	Tianjin, 300354, China}
\email{srxu@tju.edu.cn}

\author{Ruijia Zhang}
\address{Department of Mathematics, Sun Yat-sen University, Guangzhou, 510275, China}
\email{zhangrj76@mail.sysu.edu.cn}
\maketitle

\section{Introduction}


A milestone in the regularity theory of Monge-Amp\`ere equations is the celebrated interior $C^2$ estimate established by Pogorelov \cite{Po1} in 1971 under homogeneous boundary conditions. This estimate was a crucial ingredient in his proof of the Liouville-type theorem for the Monge-Amp\`ere equation in \cite{Po2}. Now referred to as the \emph{Pogorelov-type interior $C^2$ estimate} (see also \cite{Yuan}), its necessity is highlighted by Pogorelov's counterexample \cite{Po3}, which shows that pure interior $C^2$ estimates generally fail when $n \geqslant 3$. In two dimensions, however, these estimates remain valid, as first proven by Heinz \cite{Heinz} (cf. \cite{CHO, Liu}).

Pogorelov-type interior $C^2$ estimates have also been extended to various classes of fully nonlinear equations. Notable generalizations include the $k$-Hessian equations \cite{CW,SUW}, the $p$-Monge-Ampère equations \cite{CJ, Dinew, Dong}, the Hessian quotient equations \cite{LT2}, and the dual $k$-Hessian equations \cite{CJ, CDH}. These estimates play a pivotal role not only in the regularity theory of these operators but also in establishing associated Liouville-type theorems.

As in \cite{LRW}, a Pogorelov-type interior $C^2$ estimate that is independent of global gradient bounds is essential for proving Liouville-type theorems for the $k$-Hessian equation$$\sigma_k (D^2 u) = 1 \quad \text{in } \mathbb{R}^n,$$where $\sigma_k$ denotes the $k$-th elementary symmetric function. In what follows, we briefly review the development of Liouville-type theorems for $k$-Hessian equations.

A classical result due to Jörgens \cite{Jo}, Calabi \cite{Ca}, and Pogorelov \cite{Po2} states that any smooth, strictly convex solution to the Monge-Amp\`ere equation $\det(D^2u) = 1$ in $\mathbb{R}^n$ must be a quadratic polynomial; see also \cite{CYau, TruW}. Since then, substantial effort has been devoted to extending this Liouville-type property to $k$-Hessian equations. A pioneering step in this direction was taken by Bao, Chen, Guan, and Ji \cite{BCGJ}, who established the result under the additional assumptions that the solutions are strictly convex and satisfy a quadratic growth condition.

It is natural to expect the Liouville property for $k$-Hessian equations to hold under weaker assumptions--namely, that the solution is either semi-convex (as conjectured in \cite{CY}) or merely $k$-admissible and satisfies a quadratic growth condition (as suggested in \cite{LRW}, in view of Warren's counterexample \cite{Warren}). Regarding the first direction, under an almost convexity assumption, the result was previously established by Chang and Yuan \cite{CY} for the $2$-Hessian equation $\sigma_2 (D^2u) = 1$ in $\mathbb{R}^n$. Recently, Shankar and Yuan \cite{SY} successfully removed this restriction, proving that any smooth, semi-convex solution must be a quadratic polynomial.
In the second direction, Li, Ren, and Wang \cite{LRW} established the Liouville-type theorem for the $k$-Hessian equation $\sigma_k (D^2u) = 1$ in $\mathbb{R}^n$ under the additional assumption that the solution is $(k+1)$-convex. This result was subsequently generalized by Chu and Dinew \cite{CD} and Zhang \cite{Zhang}. More precisely, Chu and Dinew assumed the relaxed condition $\sigma_{k+1}(D^2u) \geqslant -K$ (which corresponds to $K=0$ in \cite{LRW}), whereas Zhang instead incorporated the semi-convexity assumption $D^2u \geqslant -KI$.

We note that the methodologies employed in these two directions are fundamentally distinct. In the first direction, Shankar and Yuan established a Jacobi inequality for the shifted trace and subsequently utilized the Legendre-Lewy transformation to convert the $2$-Hessian equation into a uniformly elliptic one. In the second direction, a crucial step involves establishing a Pogorelov-type interior $C^2$ estimate that is independent of global gradient bounds, as demonstrated in \cite{LRW}. To derive a key concavity inequality for the $k$-Hessian operator, the authors in \cite{LRW} required the solution to be $(k+1)$-convex. Zhang \cite{Zhang} subsequently relaxed this $(k+1)$-convexity requirement to semi-convexity while still preserving the necessary concavity inequality. By contrast, Chu and Dinew \cite{CD} bypassed this path by deriving the Pogorelov-type interior $C^2$ estimate via a Pogorelov-type interior gradient estimate under the lower bound assumption $\sigma_{k+1}(D^2u) \geqslant -K$.

The purpose of this paper is to generalize the existing Pogorelov-type interior $C^2$ estimates to a wider class of fully nonlinear elliptic Hessian equations--namely, equations involving linear combinations of multiple $k$-Hessian operators under substantially weaker conditions.

Now, we introduce some notation.
Let $m, k,n \in \mathbb{N}$, $m<k \leqslant n$ and $n\geqslant 3$, $a=(a_1, \dots, a_m) \in \mathbb{R}^m$, and $\lambda \in \mathbb{R}^n$. Define
\begin{align}\label{sumH}
    F (\la) =\s_k^{(n)} (\la)+\sum_{r=1}^m a_r \s_{k-r}^{(n)}(\la),
\end{align}
where $\sigma_k^{(n)}$ denotes the $k$-th elementary symmetric function on $\mathbb{R}^n$,
with the convention that $\sigma_0^{(n)} = 1$ and $\sigma_i^{(n)} = 0$ for $i < 0$ or $i > n$.
Let $\la (D^2 u) = (\la_1, \dots, \la_n)$ denote the eigenvalues of the Hessian $D^2 u$ of a smooth function
$u : \Omega \rightarrow \mathbb{R}$, where $\Omega \subset \mathbb{R}^n$ is a domain.
For convenience, we define
\[F(D^2 u) := F(\la(D^2 u) ). \]

We consider the following equations involving sums of Hessian operators with vanishing Dirichlet boundary value in general form,
\begin{equation}\label{eq1}
\left\{\begin{aligned}
F(D^2 u) &= \psi(x,u, \nabla u) \quad &\text{in} \ &\Omega, \\
u&=0 \quad&\text{on} \ &\partial \Omega.
\end{aligned}\right.
\end{equation}
Here $\psi\in C^2(\overline{\Omega}\times \mathbb{R}\times \mathbb{R}^n)$ satisfies $\psi>\psi_0>0$ for a constant $\psi_0>0$ in $\overline{\Omega}$.
Besides, we investigate smooth entire solutions in $\mathbb{R}^n$ satisfying equation:
\begin{equation}
	\label{sigma-k}
	F(\lambda(D^2u)) = 1, \quad \text{in } \mathbb{R}^n,
\end{equation}
where $\lambda(D^2 u) = (\lambda_1, \ldots, \lambda_n)$ denotes the eigenvalues of the Hessian matrix $D^2 u$ of a smooth function $u$ on $\mathbb{R}^n$.

The sum-of-Hessians equation \eqref{eq1} can be viewed as a natural generalization of the $k$-Hessian equation \eqref{khessian}, which is deeply connected to numerous problems in differential geometry, convex geometry and complex geometry. The study of second-order a priori estimates for equation \eqref{khessian} has been a longstanding problem that remains open for $3\leqslant k \leqslant n-3$ (cf. \cite{GRW, RW1, RW2}). Recently, the sum-of-Hessians equation has attracted increasing attention. Harvey and Lawson \cite{HL} investigated special Lagrangian equations, which fall into this category and can be reduced to a class of sum-of-Hessians equations. Similar types of equations were also studied by Krylov \cite{Kr} and Dong \cite{D}, who established $C^{1,1}$ estimates by exploiting the concavity of the corresponding operators. More recently, Guan and Zhang \cite{GZ} considered curvature estimates for these equations on hypersurfaces. Beyond these applications, equations of this type appear in the study of the Fu-Yau equation and Mirror Symmetry within complex geometry (see, e.g., the references in \cite{GZ}).

Equations \eqref{eq1} and \eqref{sigma-k} are elliptic for solutions in an admissible set.
Recall that the admissible set for the $k$-Hessian equation, the G{\aa}rding cone $\Gamma_k^{n} \subset \mathbb{R}^n$ is an open symmetric convex cone with a vertex at the origin, defined by
	\begin{align}\label{gk}
	\Gamma_k^{n}=\lbrace \lambda\in\mathbb{R}^{n} \;|\; \s_i^{(n)} (\lambda)>0, \;\forall\; i= 1,\ldots,k\rbrace.
	\end{align}
The key observation here is that the operator \eqref{sumH} can be interpreted as a $k$-Hessian operator in a higher-dimensional space $\mathbb{R}^{n+m}$, provided that the following Real Root Hypothesis holds.
\begin{enumerate}
    \item[(RR)] \label{RR}
    The polynomial of degree $m$
    \[
    P(t)=t^m + \sum_{r=1}^m (-1)^r a_r t^{m-r}
    \]
    has $m$ real roots $y_i \in \mathbb{R}$, $i=1,\dots,m$.
\end{enumerate}

Hypothesis (RR) was proposed by Li-Ren-Wang \cite{LRW2} to coincide with the Quotient Concavity condition. In that work, they investigated curvature estimates for closed convex hypersurfaces satisfying a sum-of-Hessians curvature equation. We provide an alternative interpretation of this hypothesis by placing sum-of-Hessian operators within a higher-dimensional space. 

Denote $y = (y_1, \dots, y_m)$.
We study the problem within the admissible set of \eqref{eq1}, defined by
\[
\Gamma_k^{n+(m)}=\lbrace\lambda \in \mathbb{R}^{n} \;|\;(\la, y) \in \Gamma_k^{n+m}\rbrace.
\]
Clearly, $\Gamma_k^{n+(m)}$ is a convex set, and $F$ is elliptic in this set since 
$\Gamma_k^{n+(m)} \times \{y\} \subset \Gamma_k^{n+m}$. 
Moreover, in the set $\Gamma_k^{n+(m)}$,
\begin{align}\label{sl}
    \sigma_l^{(n+m)}(\lambda, y)
    = \sigma_l^{(n)}(\lambda)
      + \sum_{r=1}^m a_r \sigma_{l-r}^{(n)}(\lambda)
    > 0, 
    \quad l = 1, \dots, k.
\end{align}

To derive the estimates, we require the solution to satisfy one of the following Conditions: 
\begin{enumerate}
    \item \label{con1}
    $\lambda \in \Gamma_k^{n+(m)}$ and $y_i \geqslant  0$ for all $i = 1, \dots, m$;
    
    \item \label{con2}
    $\lambda \in \Gamma_{k-1}^n$ and $a_i \geqslant 0$ for all $i = 1, \dots, m$.
\end{enumerate}

When $m=0$, \eqref{sumH} reduces to the $k$-Hessian equation
\begin{align}\label{khessian}
\sigma_k(\lambda) = \psi(x,u,\nabla u).
\end{align}

When $m=1$, \eqref{sumH} reduces to the following sum-of-Hessians equation:
\begin{align}\label{sum1}
\sigma_k(\lambda) + a\,\sigma_{k-1}(\lambda)
= \psi(x,u,\nabla u),
\end{align}
which always satisfies hypothesis (RR).
In this case, Condition \eqref{con1} is equivalent to Condition \eqref{con2}, since $\sigma_{k-1} > 0$ when $\lambda \in \Gamma_k^{n+(1)}$ (see Lemma \ref{sigmak} (5)). 
The equation \eqref{sum1} has been widely studied in \cite{LR,LR2,LYZ,JX} under the above conditions.

For general $m$, we note that Condition \eqref{con2} implies 
$\lambda \in \Gamma_k^{n+(m)}$ via \eqref{sl}, which is the first requirement of Condition \eqref{con1}. This means that 
\begin{align}\label{LRWcon}
    \tilde\Gamma_k^n = \Gamma_{k-1}^n \cap\lbrace\lambda \in \mathbb{R}^n \;|\; \sigma_k^{(n)}(\lambda)
      + \sum_{r=1}^m a_r\sigma_{k-r}^{(n)}(\lambda)>0\rbrace
\end{align}
is an admissible set. As pointed out by Li-Ren-Wang \cite{LRW2}, \eqref{LRWcon} guarantees that $F$ is elliptic under Hypothesis (RR). Otherwise, Condition \eqref{con1} implies $a_i \geqslant 0$, but is not equivalent to $\lambda \in \Gamma_{k-1}^n$ in general. Hypothesis (RR) and Condition \eqref{con2} can be regarded as generalizations of the conditions in Theorem 4 of \cite{LRW2}. Condition \eqref{con2} relaxes the convexity requirement of solutions in \cite{LRW2} to $\Gamma_{k-1}^n$, while Condition \eqref{con1} describes a different setting in which the same results may still hold.

As in the case of the $k$-Hessian equation (\cite{GRW, LRW, RW1, RW2, Zhang}), we exploit the concavity for the operator \eqref{sumH}.
Let $\lambda^{\downarrow} \in \mathbb{R}^n$ denote the decreasing rearrangement of $\lambda \in \mathbb{R}^n$, defined by
\[
\lambda^{\downarrow} := (\lambda_1, \ldots, \lambda_n), \quad \text{where } 
\lambda_1 \geqslant \cdots \geqslant \lambda_n.
\]
Motivated by the work of Zhang \cite{Zhang}, we establish a concavity inequality for the sum-of-Hessians operator $F$ in Lemma \ref{Fconcavity} for $k=n-1, n$ or $k\leqslant n-2$ under a dynamic semi-convexity condition, i.e., $(D^2 u)_{\min} \geqslant - \delta \Delta u$. This inequality can be regarded as a Jacobi inequality valid for sufficiently large $\ln u_{11}$, at
points where $u_{11}$ attains its maximum. For the 2-Hessian equation, the analogous Jacobi inequality concerning $\ln \Delta u$ was previously established by Shankar-Yuan in \cite{SY2} for $n=4$ and for $n\geqslant 5$ under the dynamic semi-convexity condition. In particular, when $k=n-1$, we extend the concavity inequality for the $(n-1)$-Hessian operator established by Ren-Wang \cite{RW1} (see also \cite{Tu}) to the general sum-of-Hessians operator, drawing on observations of Zhang \cite{Zhang} and Lu-Tsai \cite{LT}. When $m = 0$, the inequality in Lemma \ref{Fconcavity} reduces to the case in Lemma \ref{key}, which was first proved by Zhang in \cite{Zhang} under the assumptions that $\lambda^{\downarrow} \in \Gamma_k^n$ and $\lambda_n \geqslant -K$ for some constant $K > 0$. This inequality is crucial for establishing Pogorelov estimates for \eqref{eq1} and characterizing the rigidity of \eqref{sigma-k}. Namely, we have the following theorem.

\begin{theorem}\label{thm1}
Assume that $\psi \in C^2(\overline{\Omega} \times \mathbb{R} \times \mathbb{R}^n)$ satisfies $\psi > \psi_0 > 0$ for some constant $\psi_0 > 0$ in $\overline{\Omega}$. Then, for any smooth solution of the Dirichlet problem for equation \eqref{eq1} satisfying Hypothesis (RR)~and Condition \eqref{con1} (or Condition \eqref{con2}), the following estimate holds:
\[
(-u)^{\alpha} \Delta u \leqslant C
\]
under one of the following conditions:
\begin{enumerate}
    \item[(A)] $k = n-1$ or $k = n$;
    \item[(B)] $2\leqslant k \leqslant n-2$, and 
    $(D^2 u)_{\min} \geqslant - \delta \Delta u$ for some sufficiently small $\delta > 0$.
\end{enumerate}
Here $\alpha > 0$ and $C > 0$ depend only on $n$, $k$, $m$, $\delta$, $\psi_0$, $||a||$, $||\psi||_{C^2}$, and $||u||_{C^1}$, and are independent of $\Omega$.
\end{theorem}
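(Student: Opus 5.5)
We follow the classical Pogorelov scheme, as adapted in \cite{LRW, Zhang}. The concavity inequality of Lemma \ref{Fconcavity} is the structural input. Consider the test function
\[
\phi = \alpha \ln(-u) + \ln u_{11} + \frac{\beta}{2}|\nabla u|^2 ,
\]
where $u_{11}$ denotes the largest eigenvalue of $D^2u$, and $\alpha,\beta$ are large constants to be chosen. Since $u<0$ in $\Omega$ and $u=0$ on $\partial\Omega$, $\phi$ attains its maximum at an interior point $x_0$. Rotate coordinates so that $D^2u(x_0)$ is diagonal with $\lambda_1=u_{11}\geqslant\cdots\geqslant\lambda_n$. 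If $\lambda_1$ has multiplicity, perturb in the standard way, e.g.\ by replacing $u_{11}$ by $u_{11}$ plus a small smooth quadratic perturbation, or by using the viscosity comparison of \cite{Zhang}. Because $\lambda\in\Gamma_k^{n+(m)}\subset\Gamma_1$ componentwise in the sense of \eqref{sl}, we have $\Delta u\leqslant n\lambda_1$, and $|\lambda_i|\leqslant C\lambda_1$. It therefore suffices to bound $(-u)^\alpha\lambda_1$.

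At $x_0$ write $F^{ij}=\partial F/\partial u_{ij}$, which is diagonal there. The first-order condition is $\alpha u_i/u + u_{11i}/u_{11} + \beta u_iu_{ii}=0$. The second-order condition is $0\geqslant F^{ii}\phi_{ii}$. Differentiate the equation twice in $x_1$: we get $F^{ii}u_{ii11}+F^{pq,rs}u_{pq1}u_{rs1}=\psi_{11}$, where $\psi_{11}\geqslant -C-C\lambda_1^2$ after using the dependence of $\psi$ on $\nabla u$. The term $\beta F^{ii}u_{ii}^2$ then appears, and it dominates $C\lambda_1^2$ because $F^{11}\lambda_1^2\gtrsim \lambda_1\sum F^{ii}$ in suitable cases. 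We also use $F^{ii}u_{ii}=kF-\sum_r r a_r\sigma_{k-r}$, which is bounded, and $\sum F^{ii}\geqslant c>0$. The latter follows from realizing $F$ as $\sigma_k^{(n+m)}(\lambda,y)$ restricted to $\mathbb{R}^n$ via (RR), together with Condition \eqref{con1} or \eqref{con2}, which ensures $\sigma_{k-1}^{(n+m-1)}(\lambda|i,y)$ controls things. Altogether this yields
\[
0\geqslant -\frac{F^{pq,rs}u_{pq1}u_{rs1}}{\lambda_1}+\frac{2\sum_{i>1}F^{ii}u_{11i}^2}{\lambda_1^2}-\frac{F^{ii}u_{11i}^2}{\lambda_1^2}+\beta F^{ii}\lambda_i^2-\frac{\alpha F^{ii}u_i^2}{u^2}+\frac{\alpha}{u}F^{ii}u_{ii}-C\beta-C\sum F^{ii}.
\]

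The key step is to use Lemma \ref{Fconcavity} to absorb the bad third-derivative terms. The lemma states that, when $\lambda_1$ is large, under (A), or under (B) with the dynamic semi-convexity $\lambda_n\geqslant-\delta\Delta u$,
\[
-F^{pq,rs}u_{pq1}u_{rs1}+2\sum_{i>1}\frac{F^{ii}u_{11i}^2}{\lambda_1}\geqslant (1+\epsilon)\frac{F^{11}u_{111}^2}{\lambda_1}-C\sum F^{ii}
\]
for some $\epsilon>0$. This controls the $i=1$ term, and leaves $\epsilon F^{11}u_{111}^2/\lambda_1^2$ as a good term. For $i\geqslant2$ the third-derivative terms are absent from the bad side. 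For $i=1$ we use the first-order condition: $u_{111}/\lambda_1 = -\alpha u_1/u-\beta u_1\lambda_1$, so $\alpha^2F^{11}u_1^2/u^2\leqslant C\epsilon^{-1}(\ldots)$ is handled in the standard case split. In case 1, $F^{11}\geqslant c\sum F^{ii}$ after cancellation against $\epsilon$. In case 2, $F^{11}$ is small, and then $\beta F^{ii}\lambda_i^2$ over indices with comparable $\lambda_i$ is used together with the bound $F^{11}\lambda_1\geqslant c\,\sigma_k$ from Lemma \ref{sigmak}. The $\alpha F^{ii}u_i^2/u^2$ terms for $i\geqslant 2$ are bounded via the first-order condition, $u_{11i}^2/\lambda_1^2\leqslant 2\alpha^2u_i^2/u^2+2\beta^2u_i^2\lambda_i^2$. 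The good term $2F^{ii}u_{11i}^2/\lambda_1^2$ dominates $\alpha F^{ii}u_i^2/u^2$ once $\alpha$ is large relative to constants, following \cite{LRW}. Combining these gives $\beta c\,\lambda_1^{2}F^{11}\leqslant C\alpha^2/u^2\cdot\sum F^{ii}$. With $F^{11}\lambda_1\gtrsim\sum F^{ii}$-type bounds, this yields $(-u)^2\lambda_1\leqslant C$, and then $(-u)^\alpha\lambda_1\leqslant C$. No constant depends on $\Omega$, since only $\|u\|_{C^1}$ and $\psi$ enter.

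The main obstacle is the interplay between the concavity inequality and the lower bound $F^{11}\lambda_1\geqslant c\sum_i F^{ii}$, or at least $F^{11}\lambda_1^2\geqslant c\lambda_1$, in the sum-of-Hessians setting. I would prove it by passing to $\sigma_k^{(n+m)}(\lambda,y)$. There, $F^{11}=\sigma_{k-1}^{(n+m-1)}(\lambda|1,y)$, and the known $k$-Hessian estimates $\lambda_1\sigma_{k-1}(\lambda|1)\geqslant c\,\sigma_k$ apply. This requires $(\lambda,y)\in\Gamma_k^{n+m}$ with $\lambda_1$ still the largest entry, which holds once $\lambda_1$ exceeds $\max y_i$. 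Under Condition \eqref{con2}, I would instead argue directly with $\lambda\in\Gamma_{k-1}^n$ and $a_r\geqslant0$, termwise for each $\sigma_{k-r}$.
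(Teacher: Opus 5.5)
Your overall scheme matches the paper: the test function $\alpha\ln(-u)+\ln\lambda_1+\frac{\beta}{2}|\nabla u|^2$, a perturbation to handle multiplicity of $\lambda_1$, Andrews' formula, and a concavity inequality to absorb $F^{11}u_{111}^2$. There is, however, a genuine gap in Case (A) with $k=n-1$.

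\textbf{The gap in Case (A), $k=n-1$.} You assert that Lemma \ref{Fconcavity} holds ``under (A)''. That lemma requires $\lambda_n\geqslant-\delta\lambda_1$ with $\delta$ small, and admissible solutions with $k=n-1$ need not satisfy this. For example, take $m=0$ and $\lambda=(N,\dots,N,-t)$ with $t=(N-N^{2-n})/(n-1)$. This vector lies in $\Gamma_{n-1}^n$, has $\sigma_{n-1}=1$, and has $\lambda_n\approx-\lambda_1/(n-1)$. The same defect breaks the bound $\lambda_1-\lambda_p+1\leqslant(1+2\delta)\lambda_1$, which is what produces the weight $(1+2\delta)\lambda_1$ and absorbs the mixed terms $F^{pp}u_{11p}^2/\lambda_1^2$.

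The paper treats (A) separately.
\begin{itemize}
\item For $k=n$ it observes that $\sigma^{(n+m)}_{1;1\cdots n-1}(\hat\lambda)=\lambda_n+\sum_i y_i>0$. So $u$ is semi-convex and the (B) argument applies.
\item For $k=n-1$ it first proves $\lambda_n\geqslant-\lambda_1/(n-1)-C$, using $\lambda_{n-1}+\lambda_n+\sum_i y_i>0$ and $\sigma^{(n+m)}_{n-1}>0$. This gives $\lambda_1-\lambda_p+1\leqslant\frac{n}{n-1}\lambda_1+C$, so the mixed terms are still absorbed because $2(n-1)/n>1$ for $n\geqslant3$.
\item It then proves a different concavity inequality, Lemma \ref{key n-1}, with weights $2/(\lambda_1-\lambda_i+1)$. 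Its proof is a separate argument in the regime $\lambda_n<-\delta\lambda_1$.
\end{itemize}
Nothing in your proposal replaces this.

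\textbf{Errors in Case (B).} Even here, the bookkeeping around the lemma and the endgame are wrong as written.

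First, Lemma \ref{Fconcavity} is applied to $\xi=(u_{111},u_{221},\dots,u_{nn1})$.
\begin{itemize}
\item The good term it consumes is $2\sum_{i>1}F^{ii}u_{ii1}^2/((1+2\delta)\lambda_1^2)$. This comes from the $i=p$ part of the perturbation term $\sum_i\sum_{p>1}2F^{ii}u_{1pi}^2/(\lambda_1(\lambda_1-\lambda_p+1))$.
\item The lemma also carries $2(\nabla_1F)^2/F^2=2(\nabla_1\psi)^2/\psi^2$, which costs an $O(\lambda_1)$ error after scaling.
\item The mixed derivatives $u_{11p}$, $p>1$, are a separate matter. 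They do appear on the bad side, contrary to your remark, and are absorbed by the $i=1$ part of the perturbation term together with the Andrews term $2\sum_{p>1}\frac{F^{pp}-F^{11}}{\lambda_1-\lambda_p}u_{11p}^2$.
\end{itemize}
Your version, written with $u_{11i}$, conflates the two and uses one good term twice.

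Second, your conclusion rests on $F^{11}\lambda_1\gtrsim\sum_iF^{ii}$, which is false. For $k=2$ and $\lambda=(N,\varepsilon,\dots,\varepsilon)$ with $\sigma_2=1$, one has $F^{11}\lambda_1\approx1$ while $\sum_iF^{ii}\approx(n-1)N$.

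Fortunately, no $\sum_iF^{ii}$ error arises in Theorem \ref{thm1}, so the correct endgame is simpler.
\begin{itemize}
\item The term $\psi_{p_l}u_{11l}/\lambda_1$ from $\nabla_{11}\psi$, which you dropped, combines with $\beta u_l\psi_{p_l}\lambda_l$ via the critical equation into $-\alpha\psi_{p_l}u_l/u$. Hence all errors are $O(\lambda_1+\beta+\alpha/|u|)$.
\item These errors are absorbed by $\beta F^{11}\lambda_1^2\geqslant c\beta\psi_0\lambda_1$, which is Lemma \ref{sigmak}(4) in $\mathbb{R}^{n+m}$, as you note.
\item The $i=1$ term $\alpha F^{11}u_1^2/u^2$ is absorbed by $\beta F^{11}\lambda_1^2$ once $(-u)\lambda_1$ is large, taking e.g.\ $\alpha=\beta^2$. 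No case split is needed.
\end{itemize}

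Finally, $\sum_iF^{ii}u_{ii}=kF-\sum_r ra_r\sigma_{k-r}$ is not bounded. Only the upper bound $\sum_iF^{ii}u_{ii}\leqslant kF$ is needed. Under Condition \eqref{con1} it follows from $\sum_iF^{ii}u_{ii}=k\sigma_k^{(n+m)}-\sum_jy_j\sigma^{(n+m)}_{k-1;y_j}$ with $y_j\geqslant0$.
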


When $m = 1$, Condition \eqref{con1} is equivalent to Condition \eqref{con2}. 
The Pogorelov estimates in case (B) were obtained by Li-Ren \cite{LR} under the additional assumption that $\sigma_k > 0$, together with Condition \eqref{con2}. 
In contrast, our case (B) relaxes the assumption in \cite{LR} due to Lemma \ref{sigmak} (8).
When $m = 1$, in case (A) with $k = n$, the same Pogorelov estimates were first established by Li-Ren \cite{LR}; for completeness, we provide an alternative proof.
When $m = 1$ and $k = 2$ or $k = 3$, Liu-Ren \cite{LR2} derived the same Pogorelov estimates under Condition \eqref{con2};
When $m = 1$, for general $k$, assuming in addition that the solution is semi-convex, Liang-Yan-Zhu \cite{LYZ} and Jin-Xu \cite{JX} also proved the Pogorelov estimates.

We also consider the Dirichlet problem with $\psi$ depending only on $x$. In this case, we establish the following Pogorelov estimates, which are independent of $||\nabla u||_{C^0}$.

\begin{theorem}\label{thm2}
	Suppose that $u : \Omega \rightarrow \mathbb{R}$ is a smooth admissible solution to the Dirichlet problem \eqref{eq1} 
    with $\psi \equiv \psi(x)$. Suppose that the sum-of-Hessians operator $F$ satisfies Hypothesis (RR) and Condition \eqref{con1} (or Condition \eqref{con2}). Then, we have the  following interior estimate
	\begin{equation}\label{pogrelov}
		(-u)^\beta \Delta u \leqslant C,
	\end{equation}
    under one of the following conditions,
\begin{enumerate}
\item[(A)] $k=n-1$ or $k=n$,
     \item[(B)] $2\leqslant k\leqslant n-2$, and
    $(D^2 u)_{\min}\geqslant - \delta \Delta u$ for some sufficiently small $\delta >0$.
\end{enumerate}
Here $\beta> 0$ and $C > 0$ depend only on $n$, $k$, $m$, $\delta$, $\psi_0$, $||a||$, $||\psi||_{C^2}$, $||u||_{C^0}$, and the diameter of the domain $\Omega$.
\end{theorem}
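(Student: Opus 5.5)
The approach follows the Li--Ren--Wang / Chu--Dinew scheme for gradient-independent Pogorelov estimates: a Pogorelov-type test function in which the gradient enters only through $|x|^2$-type terms rather than through $|\nabla u|^2$. Since $\psi=\psi(x)$, differentiating the equation produces no $\nabla u$ or $D^2u$ terms from $\psi$. The only place $\|u\|_{C^1}$ entered Theorem \ref{thm1} was the term $\frac{1}{2}|\nabla u|^2$ in the auxiliary function, and we remove it.

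Normalize by translating so that $0\in\Omega$. Consider
\[
\phi=\beta\ln(-u)+\ln \lambda_{\max}(D^2u)+\frac{\epsilon}{2}|x|^2 ,
\]
or its perturbed version, where $u_{11}$ is perturbed so that it is smooth at the maximum point $x_0$. By the comparison principle with a large ball (admissibility gives $\Delta u>0$, so $u$ is subharmonic), $u<0$ in $\Omega$. We also have $\|u\|_{C^0}$ and $\mathrm{diam}\,\Omega$ as controlled quantities. At $x_0$ we rotate so that $D^2u$ is diagonal with $u_{11}=\lambda_1$ maximal. The first-order condition gives
\[
\frac{u_{11i}}{u_{11}}=-\beta\frac{u_i}{u}-\epsilon x_i .
\]
The second-order condition $0\ge F^{ii}\phi_{ii}$ gives, after differentiating the equation twice and using $F^{ii}u_{ii}=\sum_r (k-r)a_r\sigma_{k-r}+k\sigma_k$ (bounded by $C$ times $1+\sum F^{ii}$ in terms of $\psi$),
\[
0\ge \frac{-F^{pq,rs}u_{pq1}u_{rs1}}{u_{11}}-\frac{F^{ii}u_{11i}^2}{u_{11}^2}+\beta\frac{F^{ii}u_{ii}}{u}-\beta\frac{F^{ii}u_i^2}{u^2}+\epsilon\sum F^{ii}-\frac{C}{u_{11}}.
\]
Here the extra terms from the perturbation of $\lambda_{\max}$ are nonnegative, and $\psi_{11}$ is bounded by $\|\psi\|_{C^2}$.

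The key step is the concavity inequality of Lemma \ref{Fconcavity}, applied via the $\Gamma_k^{n+m}$ interpretation under (RR) and Condition \eqref{con1} or \eqref{con2}. Under case (A) it holds unconditionally; under case (B) it holds thanks to the dynamic semi-convexity. It controls $\frac{F^{ii}u_{11i}^2}{u_{11}^2}$ for $i\ge2$, and a portion of the $i=1$ term, by the third-order concavity term, provided $u_{11}$ is large. We then split indices as in Chu--Dinew / LRW. For $i=1$ we use the $i=1$ term of the first-order condition, which gives $\beta^2u_1^2/u^2$ up to the $\epsilon x_1$ term. This is absorbed using $F^{11}u_{11}^2\ge c\,\sum F^{ii}$-type bounds: $F^{11}\lambda_1\ge c\,\sigma_k$-type lower bounds for the $\Gamma_k^{n+m}$ operator, with the $y$ entries fixed.

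The main obstacle is the term $-\beta F^{ii}u_i^2/u^2$, since $|\nabla u|$ is not controlled. Following Chu--Dinew, one either uses the good term $\beta^2$ from squaring the first-order condition with the sharper concavity (choosing the $1+\delta$ gain in Lemma \ref{Fconcavity} to dominate $\beta$ versus $\beta^2$), or first proves a Pogorelov-type interior gradient estimate $(-u)|\nabla u|\le C(\|u\|_{C^0},\mathrm{diam})$. I would try the latter first: a test function $\ln|\nabla u|+\gamma\ln(-u)+\epsilon|x|^2$, using $\sum F^{ii}\ge c>0$, which follows from $\psi\ge\psi_0$ and the Maclaurin-type inequality in $\Gamma_k^{n+m}$. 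Then $|u_i|^2/u^2\le C(-u)^{-4}$, and choosing the power $\beta$ large absorbs this into $\epsilon\sum F^{ii}$ after multiplying through by $u^{4}$. This closes to $(-u)^\beta u_{11}\le C$, hence to \eqref{pogrelov}, since $\Delta u\le n u_{11}$ (because $\lambda_{\min}\ge -C\lambda_1$ in the cone).
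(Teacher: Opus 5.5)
Your test function $\beta\ln(-u)+\ln\la_{\max}+\frac\epsilon2|x|^2$ (perturbed) and your use of Lemma \ref{Fconcavity} match the paper. The step that fails is your treatment of $-\beta F^{ii}u_i^2/u^2$.

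An interior bound $(-u)|\nabla u|\le C$ only yields $-\beta F^{ii}u_i^2/u^2\ge -C\beta(-u)^{-4}\sum_iF^{ii}$. This coefficient blows up near $\partial\Omega$ and grows with $\beta$, so $\epsilon\sum_iF^{ii}$ cannot absorb it. Multiplying by $u^4$ changes neither signs nor relative sizes, and the gradient estimate itself is only sketched. Likewise, for $i=1$ you invoke a good term $F^{11}u_{11}^2$ that your test function does not produce: in Theorem \ref{thm1} it came from $\frac L2|\nabla u|^2$, which you removed.

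No gradient information is needed. The critical equation gives $\beta u_i/u=-u_{11i}/u_{11}-\epsilon x_i$, so with $0\in\Omega$,
\[
\beta\sum_i\frac{F^{ii}u_i^2}{u^2}=\frac1\beta\sum_iF^{ii}\Big(\frac{u_{11i}}{u_{11}}+\epsilon x_i\Big)^2\le\frac2\beta\sum_i\frac{F^{ii}u_{11i}^2}{u_{11}^2}+\frac{2\epsilon^2\mathrm{diam}(\Omega)^2}{\beta}\sum_iF^{ii}.
\]
The last term is absorbed by $\epsilon\sum_iF^{ii}$ for $\beta$ large. The first is only a $\frac2\beta$-fraction of the third-order term.
\begin{itemize}
\item For $i=1$ it is absorbed by the gain $\gamma$ of Lemma \ref{Fconcavity}; take $2/\beta\le\gamma$.
\item For $i\ge2$ the lemma is not what does the work. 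You must keep the perturbation terms $2\sum_{p>1}F^{ii}u_{1pi}^2/(u_{11}(u_{11}-u_{pp}+1))$ that you discard as merely nonnegative. The ones with $i=1<p$, together with $\frac{2(F_p-F_1)}{\la_1-\la_p}u_{11p}^2$ from \eqref{fppqq1}, put a coefficient of about $\frac{2}{1+2\de}>1+\frac2\beta$ in front of $F^{pp}u_{11p}^2/u_{11}^2$; this is \eqref{fppqq3}.
\item The ones with $p=i>1$ supply the term $2\sum_{i>1}F_i\xi_i^2/((1+2\de)\la_1F)$, $\xi_i=u_{ii1}$, on the left-hand side of Lemma \ref{Fconcavity}. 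Without them the lemma cannot be applied.
\end{itemize}
Your ``first option'' gestures at this route, which is the paper's.

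Two further ingredients are missing.

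First, the conclusion. After the above you reach $0\ge\frac{k\beta\psi}{u}+\frac\epsilon2\sum_iF^{ii}-C$. This bounds $\la_1$ only if $\sum_iF^{ii}$ grows with $\la_1$; a positive lower bound $\sum_iF^{ii}\ge c$ like the one you mention is not enough. The paper uses
\[
\sum_iF^{ii}\ge F^{nn}\ge c\,\s_{k-1}^{(n+m)}(\la,y)\ge c\,\la_1^{1/(k-1)},
\]
via Lemma \ref{sigmak} (9) and Newton--Maclaurin with $\s_k^{(n+m)}=\psi\ge\psi_0$; see \eqref{sumFi}, \eqref{sumFi2}. This gives $(-u)\la_1^{1/(k-1)}\le C$ at $x_0$. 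It also needs the sharp bound $F^{ii}u_{ii}\le k\psi$, which comes from $y_j\ge0$ (resp.\ $a_r\ge0$); see \eqref{kF}, \eqref{kF2}. The bound $C(1+\sum_iF^{ii})$ you wrote would create a term $\frac{C\beta}{u}\sum_iF^{ii}$, again unabsorbable.

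Second, Lemma \ref{Fconcavity} does not hold ``unconditionally'' in Case (A): it requires $\la_n\ge-\de\la_1$. For $k=n$ this is automatic for large $\la_1$, since $\la_n>-a_1$. For $k=n-1$ one only has $\la_n\ge-\frac{\la_1}{n-1}-C$ (see \eqref{n-1,n}). The regime $\la_n<-\de\la_1$ then needs the separate concavity inequality of Lemma \ref{key n-1}, whose weight $(\la_1-\la_i+1)^{-1}$ matches the perturbation of $\la_{\max}$.
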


For Pogorelov estimates for $k$-Hessian equations and $k$-th mean curvature equations, with constants $C$ depending on $\|\nabla u\|_{C^0}$, see Chou-Wang \cite{CW} and Sheng-Urbas-Wang \cite{SUW}, respectively, in the case where $\psi$ does not depend on $\nabla u$. 
We also refer the reader to Liu-Trudinger \cite{LiuT} and Jiang-Trudinger \cite{JT} for related estimates for the Monge-Amp\`ere equation. Recently,
Pogorelov estimates for Hessian quotient equations were established by Lu-Tsai \cite{LT2}.
The estimates in Theorem \ref{thm2} play a crucial role in establishing a rigidity theorem for entire solutions in $\mathbb{R}^n$ with quadratic growth. 
Notably, the condition $(D^2 u)_{\min} \geqslant - \delta \Delta u$ is weaker, since it allows $-(D^2 u)_{\min}$ to be sufficiently large. 
This relaxation enables us to extend several rigidity theorems obtained in earlier works \cite{BCGJ, CD, LRW, Zhang}.
More precisely, we prove a Liouville-type theorem for smooth entire solutions in $\mathbb{R}^n$ to the equation \eqref{sigma-k}.
A function $u:\mathbb{R}^n \rightarrow \mathbb{R}$ is said to satisfy the \textit{quadratic growth condition} if there exist positive constants $C_1$, $C_2$, and $R_0$ such that
\begin{align}
\label{qg}
u(x) \geqslant C_1 |x|^2 - C_2, \quad \text{for } |x| \geqslant R_0.
\end{align}

As a direct corollary of Theorem \ref{thm2}, we obtain the following Liouville-type results under the quadratic growth condition. 
The proof can be found in \cite{BCGJ, LRW} and is therefore omitted.

\begin{corollary}\label{cor1}
	For $2\leqslant k\leqslant n-2$, suppose $u:\mathbb{R}^n \rightarrow \mathbb{R}$ is a smooth solution to equation \eqref{sigma-k} under the Hypothesis (RR) and Condition \eqref{con1} (or Condition \eqref{con2}), satisfying a dynamic semi-convex condition, i.e., $(D^2 u)_{\min}\geqslant -\delta\Delta u$ for some sufficiently small $\delta > 0$. If $u$ satisfies the quadratic growth condition \eqref{qg}, then $u$ must be a quadratic polynomial.
\end{corollary}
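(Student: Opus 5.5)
The plan follows the classical scheme of \cite{BCGJ, LRW}, with Theorem \ref{thm2} supplying the interior Pogorelov bound on each sublevel set.

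\emph{Normalization.} Since $F$ is invariant under adding affine functions, I subtract an affine function so that $u\geqslant 0$ near the origin. The quadratic growth condition \eqref{qg} then gives, for $R$ large, that the sublevel set
\[
\Omega_R=\{x: u(x)<R^2\}
\]
is bounded, contains $B_{cR}$, and is contained in $B_{CR}$.

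\emph{Rescaling.} Set
\[
v_R(y)=\frac{u(Ry)-R^2}{R^2}.
\]
Then $D^2v_R(y)=D^2u(Ry)$, so $v_R$ solves $F(D^2 v_R)=1$ on
\[
\tilde\Omega_R=\{y: v_R(y)<0\},
\]
with $v_R=0$ on $\partial\tilde\Omega_R$. Moreover $B_c\subset\tilde\Omega_R\subset B_C$ and $|v_R|\leqslant C$, uniformly in $R$. Because Hessian eigenvalues are unchanged by this scaling, the dynamic semi-convexity condition $(D^2v_R)_{\min}\geqslant -\delta\Delta v_R$ still holds, and so do the admissibility, Hypothesis (RR) and Condition \eqref{con1} (or \eqref{con2}).

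\emph{Uniform Hessian bound.} Apply Theorem \ref{thm2}, case (B), with $\psi\equiv1$. The constants depend only on $\|v_R\|_{C^0}$ and the diameter of the domain, both of which are uniform in $R$. This gives
\[
(-v_R)^\beta\Delta v_R\leqslant C .
\]
Convexity of $u$ is not needed to show $-v_R\geqslant c_0>0$ on $B_{c/2}$: it suffices that $u(Ry)\leqslant R^2/2$ for $|y|\leqslant c/2$ when $R$ is large. To get this I would use the upper growth bound $u\leqslant C|x|^2+C$, which follows from $\Delta u\geqslant 0$ (as $\lambda\in\Gamma_1$) combined with the lower quadratic bound, via the standard argument in \cite{BCGJ}. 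If that is not directly available, I would instead argue with the sublevel-set geometry as in \cite{LRW}. Either way, on $B_{c/2}$ we obtain $|D^2u(Ry)|=|D^2v_R(y)|\leqslant C$. Since $\lambda\in\Gamma_1$ and $\lambda_{\min}\geqslant-\delta\Delta u$, bounding $\Delta u$ bounds all eigenvalues, so $D^2 u$ is bounded on $B_{cR/2}$ uniformly in $R$, and hence on all of $\mathbb{R}^n$.

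\emph{Uniform ellipticity.} With $D^2u$ bounded, $F(D^2u)=1$ and $\lambda$ lying in the admissible cone, the eigenvalues stay in a compact subset of the cone. Lemma \ref{sigmak} gives a positive lower bound on $\partial F/\partial\lambda_i$ there, so the equation is uniformly elliptic. It is also concave after replacing $F$ by $\sigma_k^{(n+m)}(\lambda,y)^{1/k}$ via Hypothesis (RR).

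\emph{Conclusion.} Evans--Krylov applied to $v_R$ yields
\[
[D^2 v_R]_{C^{\alpha}(B_{c/4})}\leqslant C.
\]
Scaling back, $[D^2u]_{C^\alpha(B_{cR/4})}\leqslant CR^{-\alpha}$. Letting $R\to\infty$ shows $D^2u$ is constant, so $u$ is a quadratic polynomial.

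\emph{Main obstacle.} I expect the hardest step to be the uniform upper bound $u\leqslant C(1+|x|^2)$, which is what gives the separation $-v_R\geqslant c_0$ on a fixed ball. I would first try the comparison argument of \cite{BCGJ}, adapted to the admissible class here.
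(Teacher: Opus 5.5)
Your overall scheme is the one the paper defers to \cite{BCGJ, LRW}: rescale on sublevel sets, apply Theorem~\ref{thm2} with $\psi\equiv 1$, get a global Hessian bound, then use Evans--Krylov and let $R\to\infty$. However, the step you flag as the main obstacle relies on a false implication. Subharmonicity together with the lower bound \eqref{qg} does not give $u\leqslant C(1+|x|^2)$. For instance, $|x|^2+e^{x_1}$ is subharmonic and satisfies \eqref{qg}, yet grows exponentially, so no comparison argument can extract an upper bound from these two facts alone. Your claim $B_{cR}\subset\Omega_R$ and the separation $-v_R\geqslant c_0$ on $B_{c/2}$ both rest on this. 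As written, your derivation of the global bound on $D^2u$ therefore does not go through.

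The repair is that no a priori upper bound is needed. Apply the Pogorelov estimate on $\{y: v_R(y)<-\tfrac12\}=\{y: u(Ry)<R^2/2\}$, where $-v_R\geqslant\tfrac12$ holds by definition. Theorem~\ref{thm2} then gives $\Delta u\leqslant 2^{\beta}C$ on $\{u<R^2/2\}$, with $C$ independent of $R$.

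\begin{itemize}
\item Every fixed $x$ lies in $\{u<R^2/2\}$ once $R$ is large, so $\Delta u\leqslant C$ on all of $\mathbb{R}^n$.
\item Combined with $\lambda\in\Gamma_1^n$ and $\lambda_{\min}\geqslant-\delta\Delta u$, this bounds $|D^2u|$ globally. For $\lambda\in\Gamma_1^n$ under Condition~\eqref{con1}, remove the $m$ entries $y_i$ using Lemma~\ref{sigmak}~(5); this gives $\lambda\in\Gamma_{k-m}^n$ with $k-m\geqslant1$.
\item The quadratic upper bound is then a consequence rather than a prerequisite, and you do not need it for the last step either. Since $u$ is entire, $w_R(y)=R^{-2}\bigl(u(Ry)-u(0)-Du(0)\cdot Ry\bigr)$ solves $F(D^2w_R)=1$ on $B_1$ with $|w_R|+|D^2w_R|\leqslant C$.
\item Evans--Krylov then gives $[D^2w_R]_{C^\alpha(B_{1/2})}\leqslant C$, i.e.\ $[D^2u]_{C^\alpha(B_{R/2})}\leqslant CR^{-\alpha}$, and letting $R\to\infty$ finishes the proof.
\end{itemize}

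The remaining ingredients of your proposal are fine:
\begin{itemize}
\item the uniform $C^0$ and diameter bounds for $v_R$;
\item invariance of admissibility and dynamic semi-convexity under the scaling;
\item uniform ellipticity by compactness inside $\Gamma_k^{n+m}$;
\item concavity of $F^{1/k}=\sigma_k^{(n+m)}(\lambda,y)^{1/k}$.
\end{itemize}
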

\begin{remark}
    For the case $k>2$, $m=0$, this can be seen as a generalization of the rigidity results recently derived by Shankar-Yuan for the 2-Hessian equation, as an application of their Theorem 1.2 in \cite{SY2}. For the case $m=1$ with Condition \eqref{con2}, Liu-Ren \cite{LR2} proved the same rigidity results under the assumption $\s_k>0$.
\end{remark}
In particular, we derive the following rigidity results for admissible entire solutions of equation \eqref{sigma-k}.
\begin{corollary}\label{cor2}
	For $k=n-1$ or $k=n$, suppose $u:\mathbb{R}^n \rightarrow \mathbb{R}$ is a smooth solution to equation \eqref{sigma-k} under the Hypothesis (RR) and Condition \eqref{con1} (or Condition \eqref{con2}). If $u$ satisfies the quadratic growth condition \eqref{qg}, then $u$ must be a quadratic polynomial.
\end{corollary}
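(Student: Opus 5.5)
The plan is to follow the scheme of Bao--Chen--Guan--Ji and Li--Ren--Wang, with Theorem \ref{thm2} supplying the key interior estimate. Let $u$ be an entire admissible solution of \eqref{sigma-k} satisfying \eqref{qg}. We may add a constant, which does not change the equation, and assume $u(0)=\min u<0$; for $k\le n-1$ admissibility only gives $\Delta u>0$, so $u$ is subharmonic rather than convex, but \eqref{qg} still makes the sublevel sets bounded.

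\emph{Rescaling.} For $R$ large, set
\[
\Omega_R=\{y:\ u(Ry)<R^2\},\qquad u_R(y)=\frac{u(Ry)-R^2}{R^2}.
\]
Then $D^2u_R(y)=(D^2u)(Ry)$, so $u_R$ solves $F(D^2u_R)=1$ in $\Omega_R$ with $u_R=0$ on $\partial\Omega_R$. The hypotheses (RR) and Condition \eqref{con1} (or \eqref{con2}) are pointwise conditions on the eigenvalues, so they are preserved. By \eqref{qg}, $\Omega_R\subset B_{C}$ uniformly in $R$. Also $u_R\ge -1-o(1)$, since $u\ge \min u$. Hence $\operatorname{diam}\Omega_R$ and $\|u_R\|_{C^0}$ are bounded independently of $R$.

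\emph{Applying Theorem \ref{thm2}.} Case (A) needs no extra hypothesis, so Theorem \ref{thm2} gives
\[
(-u_R)^\beta\Delta u_R\le C
\]
with $C$ independent of $R$. Next we need $-u_R$ bounded below near the origin. On $\{|y|\le r_0\}$ we want $u(Ry)\le R^2/2$, i.e.\ an upper quadratic bound on $u$. To get it, I would use that $\Delta u>0$ and the standard argument of \cite{BCGJ,LRW}. For $\sigma_k$ with $k\ge2$, admissibility forces $|\lambda|\le C\Delta u$. Integrating the equation over the bounded sublevel set $\{u<R^2\}$, or comparing with a quadratic barrier on a ball, gives $u(x)\le C|x|^2+C$. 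I would simply cite \cite{BCGJ,LRW} for this step, since the paper says the argument is standard.

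\emph{Conclusion of the bound.} We then get $\Delta u_R\le C$ on $B_{r_0}$ uniformly in $R$, hence $|D^2u|\le C$ on all of $\mathbb{R}^n$. With $F(D^2u)=1$ and $D^2u$ bounded, the eigenvalues stay in a compact subset of the admissible cone: $F=1$ keeps them away from the boundary. The operator is therefore uniformly elliptic on the solution.

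\emph{Liouville step.} Concavity of $F^{1/k}$ holds on $\Gamma_k^{n+(m)}$, by restricting the $\sigma_k^{1/k}$ concavity on $\Gamma_k^{n+m}$ to the affine slice $\{y\}$. Evans--Krylov then gives $[D^2u]_{C^\alpha(B_R)}\le CR^{-\alpha}\|D^2u\|_{L^\infty}$. Letting $R\to\infty$ shows $D^2u$ is constant, so $u$ is a quadratic polynomial.

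The main obstacle is the upper quadratic bound needed to place the origin well inside $\Omega_R$, without convexity. I would handle it as in \cite{LRW}, using only $u\ge$ quadratic and $\Delta u>0$ together with the gradient and doubling arguments there.
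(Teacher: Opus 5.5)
\textbf{Verdict: same approach as the paper, but with a genuine gap at the step you call the main obstacle.} The paper omits the proof and refers to \cite{BCGJ, LRW}. Your plan matches that: rescale on sublevel sets, apply Theorem \ref{thm2} in Case (A) with constants independent of $R$, deduce a global Hessian bound, and finish with Evans--Krylov using concavity of $F^{1/k}$ on $\Gamma_k^{n+(m)}$. The bounds you give for $\Omega_R$, $\|u_R\|_{C^0}$ and $\operatorname{diam}\Omega_R$ are correct, and so are the ellipticity and Liouville steps. One small correction: for the sum operator, $|D^2u|$ is controlled through $(\la,y)\in\Gamma_2^{n+m}$, which gives $|\la|\le \s_1(\la,y)=\Delta u+a_1$, not $|\la|\le C\Delta u$.

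\textbf{The unnecessary step.} You do not need an upper quadratic bound on $u$, nor do you need $\{u_R<-1/2\}$ to contain a fixed ball $B_{r_0}$. Theorem \ref{thm2} gives $\Delta u_R\le 2^\beta C$ on $\{u_R<-1/2\}$, that is,
\[
\Delta u(x)\le 2^\beta C\quad\text{whenever } u(x)<R^2/2,
\]
with $C$ independent of $R$. Since $u$ is finite at every point, each fixed $x$ lies in $\{u<R^2/2\}$ once $R^2>2u(x)$. Hence $\Delta u\le 2^\beta C$ on all of $\mathbb{R}^n$. This exhaustion is the standard argument in \cite{BCGJ,LRW}. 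An upper quadratic bound is then a consequence of the Hessian bound, not an input.

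\textbf{Why your sketch for that bound fails.} It does not work as stated, and the cited papers do not supply such an a priori bound.
\begin{itemize}
\item Comparison with a quadratic barrier bounds $u$ from above only if $u$ is already bounded above on the boundary of the ball, so the argument is circular.
\item Integrating the divergence-form equation over $\{u<R^2\}$ controls a volume or flux, not pointwise values of $u$.
\item The hypotheses $\Delta u>0$ and \eqref{qg} alone cannot give the bound: $|x|^4+|x|^2$ satisfies both and grows quartically.
\end{itemize}
As written, your proof rests on an unsupported claim. Replacing that paragraph with the exhaustion argument closes the gap.
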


For the case $m=0$, \eqref{sumH} reduces to equation \eqref{khessian}. Under the quadratic growth condition, Chen and Xiang \cite{CX}
proved the theorem for the case $k=2$, with solutions in the $\Gamma_k^n$ cone, under the additional assumption that $\sigma_3 (\lambda) > -K$ for some positive constant $K$. This assumption was previously employed by Guan and Qiu \cite{GQ} to establish a purely interior second order estimate. Chu and Dinew \cite{CD} recovered this result for general $k$ by assuming $\sigma_{k+1}(\lambda) > -K$. This was achieved as a special case of their broader framework, which encompasses a wide class of fully nonlinear equations and, in particular, includes the $p$-Monge-Amp\`ere equations. Du \cite{Du} established a necessary and sufficient condition under which a locally strictly convex solution to equation \eqref{sigma-k} must be a quadratic function. It should be noted that when $k=2$ and $n=3$ Theorem \ref{thm2} was proved by Chen and Xiang \cite{CX} under the assumption that $\lambda(D^2u) \in \Gamma_{2}^3$; when $k=n-1$, Theorem \ref{thm2} was proved by Tu \cite{Tu} under the assumption that $\lambda(D^2u) \in \Gamma_{n-1}^n$, using ideas from \cite{Zhang} and \cite{LT}. Therefore, the problem remains open for the case $2\leqslant k\leqslant n-2$.

The structure of the paper is as follows. 
In Section 2, we introduce some useful notation, properties of the $k$-th elementary symmetric function, and some preliminary calculations and estimates. 
In Section 3, we prove the key inequality for sums of elementary symmetric functions. 
In Sections 4 and 5, we prove Theorems~\ref{thm1} and~\ref{thm2} for Case (B) under Condition~\eqref{con1}, respectively. 
In Sections 6 and 7, we discuss Case (A) and Condition~\eqref{con2}, respectively.

\section{preliminaries}

	For $x = (x_1, \ldots, x_{n})\in \mathbb{R}^{n}$,
	we use $s_{k;i} (x)$ to denote the $k$-th elementary function with $x_i = 0$
	and $s_{k;ij} (x)$ to denote the $k$-th elementary function with $x_i = x_j = 0$.
	Define $x | j : = (x_1, \ldots, x_{j-1}, 0, x_{j+1}, \ldots, x_{n} )$. Then, $s_{k;i} = s_k(x|i)$ and $s_{k;ij} = s_k (x|ij)$.
	Below, we list some basic properties of the $k$-th elementary symmetric function.
	\begin{lemma}\label{sigmak}
		For $x = (x_1, \ldots, x_{n}) \in \mathbb{R}^{n}$ and $k = 1, \cdots, n$, we have
		\begin{itemize}
			\item[(1)] $ s_k = s_{k;i} + x_i s_{k-1; i}$, $\forall$ $1 \leqslant i \leqslant n$;
			\item[(2)] $ \sum_{i=1}^{n} s_{k;i} = (n-k) s_k$ and $\sum_{i=1}^{n} s_{k-1;i} x_i = k s_k$;
			\item[(3)] $\sum_{i=1}^{n} s_{k-1;i} x_i^2 = s_1 s_k - (k+1) s_{k+1}$;
			\item[(4)] If $x \in \Gamma_k^n$ and $x_1 \geqslant \cdots \geqslant x_{n}$, then we have $x_1 s_{k-1; 1} \geqslant \frac{k}{n} s_k$;
			\item[(5)] If $x \in \Gamma_{k}^n$, then we have $x | j \in \Gamma_{k-1}^n$ for all $1\leqslant j \leqslant n$;
			\item[(6)] If $x \in \Gamma_{k}^n$ and $x_i \geqslant x_j$, then we have $s_{k-1;i} \leqslant s_{k-1;j}$;
			\item[(7)] If $x \in \Gamma_k^n$ and $x_1 \geqslant \cdots \geqslant x_{n}$, then $s_{k-1} \geqslant x_1 \cdots x_{k-1}$;
			\item[(8)] For  $x \in \Gamma_{k}^n$, if  $s_{k} \leqslant A_{1}$  and  $s_{k+1} \geqslant-A_{2}$  for some constants  $A_{1}$, $A_{2} \geqslant 0$, we have that  for all $i$, $x_{i}+K \geqslant 0$  for some uniform positive constant  $K$  depending on  $n, k, A_{1}$ and $A_{2}$;
            \item[(9)] For $x \in \Gamma_k^n$ with $x_1 \geqslant \cdots \geqslant x_{n}$, we have
            $s_{k-1;k} \geqslant C s_{k-1}$ for some $C$ depending only on $n$ and $k$.
		\end{itemize}
	\end{lemma}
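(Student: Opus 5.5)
Lemma \ref{sigmak} collects standard facts, so the plan is to prove the items in order and cite the literature (Li--Ren--Wang, Zhang, Lin--Trudinger) for the more delicate ones.

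Items (1)--(3) are purely algebraic. For (1), split each monomial of $s_k$ according to whether it contains $x_i$. For (2), a given $k$-monomial survives in $s_{k;i}$ for exactly $n-k$ indices $i$, which gives the first identity. The second follows from (1) together with the Euler relation $\sum_i x_i\partial_i s_k=ks_k$, since $\partial_i s_k=s_{k-1;i}$. For (3), multiply (1) at level $k$ by $x_i$ to get $x_i^2 s_{k-1;i}=x_i s_k - x_i s_{k;i}$. Summing over $i$ and using (2) at level $k+1$, namely $\sum_i x_i s_{k;i}=(k+1)s_{k+1}$, gives $s_1s_k-(k+1)s_{k+1}$.

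For (5), I will use the interlacing characterization of G\aa rding cones. For $x\in\Gamma_k^n$, the derivative $\partial_j s_l=s_{l-1;j}>0$ for $l\le k$, since $\Gamma_k$ is a convex cone on which $s_l$ is hyperbolic. This is the standard fact that $s_{l-1}(x|j)>0$ for all $l\le k$, i.e.\ $x|j\in\Gamma_{k-1}^n$. One can also argue directly: $t\mapsto s_l(x+t e_j)$ is affine with positive slope, and letting $t\to-\infty$ along the cone boundary gives the conclusion.

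For (6), by (1) we have $s_{k-1;i}-s_{k-1;j}=(x_j-x_i)s_{k-2;ij}$, and $s_{k-2;ij}>0$ by applying (5) twice. This gives the inequality.

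For (4), use $x_1 s_{k-1;1}=s_k-s_{k;1}$. We have $\sum_i s_{k;i}=(n-k)s_k$, and by (6) $s_{k;1}\le s_{k;i}$, so $s_{k;1}\le\frac{n-k}{n}s_k$. Here the needed monotonicity is $s_{k;1}\le s_{k;i}$, which follows because $s_{k;1}-s_{k;i}=(x_i-x_1)s_{k-1;1i}\le 0$ with $s_{k-1;1i}\ge0$ by (5). Then $x_1s_{k-1;1}\ge \frac kn s_k$.

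For (7), induct on $k$ via (1). We have $s_{k-1}=s_{k-1;k}+x_k s_{k-2;k}$, and need $x_1\cdots x_{k-1}\le s_{k-1}$. This is the known Lin--Trudinger type estimate. The approach is to show $s_{k-1}(x)\ge s_{k-1}(x_1,\dots,x_{k-1},0,\dots)$ by successively zeroing out $x_n,x_{n-1},\dots,x_k$. Zeroing $x_j$ with $j\ge k$ changes $s_{k-1}$ by $-x_j s_{k-2;j}$. If $x_j\le0$ this decreases the value, fine. If $x_j>0$, a separate comparison is needed, using that $x_1,\dots,x_{k-1}\ge x_j>0$. I would cite \cite{LRW} for this.

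For (9), use $s_{k-1;k}\ge s_{k-1;1}$ by (6) reversed. Then $s_{k-1;k}\ge x_1\cdots x_{k-1}\cdot c$, and $s_{k-1}\le C x_1\cdots x_{k-1}$ in $\Gamma_k$, again by the Lin--Trudinger estimate $s_{k-1}\le C x_1\cdots x_{k-1}$ together with $s_{k-1;k}\ge c\, x_1\cdots x_{k-1}$. Careful citation suffices here.

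Item (8) is the main obstacle. I would argue by contradiction. Suppose $x_n\to-\infty$. Write $s_{k+1}=s_{k+1;n}+x_n s_{k;n}$ and $s_k=s_{k;n}+x_n s_{k-1;n}$, with $s_{k-1;n}\ge c\,s_{k-1}>0$. Since $s_k\le A_1$ and $x_n$ is very negative, we get $s_{k;n}\approx |x_n|s_{k-1;n}$, which is large. Then the Newton--Maclaurin inequality on $x|n\in\Gamma_k$ bounds $s_{k+1;n}\le C s_{k;n}^2/s_{k-1;n}$. This gives $s_{k+1}\le s_{k;n}\bigl(C s_{k;n}/s_{k-1;n}+x_n\bigr)$, and with $s_{k;n}/s_{k-1;n}\approx |x_n|$ the constant must be tracked precisely (via $(k+1)(n-k)$ factors) to get a negative multiple of $x_n^2 s_{k-1;n}$. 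Thus $s_{k+1}\le -c|x_n|^2 s_{k-1;n}\to-\infty$ once $s_{k-1;n}$ is bounded below, which contradicts $s_{k+1}\ge -A_2$. Obtaining the sharp constant in this step, and a lower bound for $s_{k-1;n}$ (from $s_{k-1}\ge$ a power of $s_k$ on $\Gamma_k$), is the delicate part. Otherwise I would cite Chu--Dinew / Zhang for (8).
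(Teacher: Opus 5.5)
\textbf{Items (1)--(3), (5), (6).} These are fine and match what the paper treats as standard. Your ``direct'' argument for (5) is circular, since the slope of $t\mapsto s_l(x+te_j)$ is exactly $s_{l-1;j}$. The G{\aa}rding fact you cite covers it, as does the paper's citation.

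\textbf{The genuine gap is in (4).} You need $s_{k;1}\le s_{k;i}$ for every $i$, and you justify it by $s_{k;1}-s_{k;i}=(x_i-x_1)s_{k-1;1i}$ with $s_{k-1;1i}\ge 0$ ``by (5)''. But (5) applied twice only gives $x|1i\in\Gamma_{k-2}^n$, which controls $s_{k-2;1i}$, not $s_{k-1;1i}$. Equivalently, using (6) one level up would require $x\in\Gamma_{k+1}^n$.

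The monotonicity is in fact false. Take $x=(2,1,-\tfrac12)\in\Gamma_2^3$. Then $s_{2;1}=-\tfrac12>-1=s_{2;2}$, equivalently $x_1s_{1;1}=1<\tfrac32=x_2s_{1;2}$. So $x_1s_{k-1;1}$ need not be the largest term in $\sum_i x_is_{k-1;i}=ks_k$, and an averaging argument of this type cannot prove (4).

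A correct proof splits into two cases:
\begin{itemize}
\item[(a)] If $s_{k;1}\le0$, then $x_1s_{k-1;1}=s_k-s_{k;1}\ge s_k$.
\item[(b)] If $s_{k;1}>0$, then $(x_2,\dots,x_n)\in\Gamma_k^{n-1}$. Newton--Maclaurin in $n-1$ variables gives $s_{k;1}\le\frac{n-k}{k(n-1)}\,s_{1;1}\,s_{k-1;1}\le\frac{n-k}{k}\,x_1s_{k-1;1}$. Hence $s_k=x_1s_{k-1;1}+s_{k;1}\le\frac nk\,x_1s_{k-1;1}$.
\end{itemize}
The paper simply cites Hou--Ma--Wu for this item.

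\textbf{Items (7)--(9).} Your sketches here also contain errors, although the paper only cites for these items.

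In (7) the signs are reversed. Since $s_{k-1}(x|j)=s_{k-1}(x)-x_js_{k-2;j}$, zeroing a \emph{negative} entry increases $s_{k-1}$; that is the problematic case, not the harmless one. The simple route is induction on the largest entry. Write $s_{k-1}=x_1s_{k-2;1}+s_{k-1;1}\ge x_1s_{k-2;1}$, using $s_{k-1;1}>0$ by (5). Then $s_{k-2;1}\ge x_2\cdots x_{k-1}$ by the induction hypothesis applied to $(x_2,\dots,x_n)\in\Gamma_{k-1}^{n-1}$.

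In (9), $s_{k-1;k}\ge s_{k-1;1}$ says nothing about $x_1\cdots x_{k-1}$, since $s_{k-1;1}$ does not involve $x_1$ and can be much smaller. The lower bound $s_{k-1;k}\ge c\,x_1\cdots x_{k-1}$ is the whole content of Lin--Trudinger, so (9) is a pure citation, as in the paper.

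In (8) your contradiction argument works, but the lower bound for $s_{k-1;n}$ cannot come from a power of $s_k$: the hypotheses give no positive lower bound on $s_k$. Instead, when $x_n<0$:
\begin{itemize}
\item[(i)] $x|n\in\Gamma_k^n$, and $x_{k-1}\ge x_k\ge|x_n|/(n-k)$ because $x_k+\dots+x_n>0$.
\item[(ii)] So (7) gives $s_{k-1;n}\ge x_1\cdots x_{k-1}\ge c|x_n|^{k-1}$.
\item[(iii)] Newton's inequality gives $s_{k+1;n}s_{k-1;n}\le\theta\,s_{k;n}^2$ with $\theta=\frac{k(n-k-1)}{(k+1)(n-k)}<1$, and $s_{k;n}=s_k+|x_n|s_{k-1;n}$.
\item[(iv)] Hence, for $|x_n|$ large, $s_{k+1}\le-(1-\theta)x_n^2s_{k-1;n}+\theta A_1|x_n|\le-c|x_n|^{k+1}+\theta A_1|x_n|$, contradicting $s_{k+1}\ge-A_2$.
\end{itemize}
No delicate constant tracking is needed.
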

	
	\begin{proof}
		For (1), (2) and (3), it is trivial. 
		For (4), see \cite{HMW}. For (5), see \cite{HS}. It can also be derived from another characterization of $\Gamma_k^n$ given as below (see \cite{K})
		\[
		\Gamma_k^{n} = \left\{x \in \mathbb{R}^n \Big| 
        \begin{aligned}
        & s_k > 0,\frac{\partial s_k}{\partial x_{i_1}} > 0, \ldots, 
		\frac{\partial^k s_k }{\partial x_{i_1} \cdots \partial x_{i_k}} > 0,\\
        &\quad\quad \mbox{for all} \; 1 \leqslant i_1 < \cdots < i_k \leqslant n \end{aligned}
        \right\}.
		\]
		For (6) and (7), see \cite{YYLi}.
		For (8), see \cite{LRW, Zhang}.
        For (9), see \cite{LTr}.
	\end{proof}
	
\begin{lemma}\label{key-c}
	Suppose $(a_1,\dots,a_m)\in\mathbb R^m$ and $y_i\in \mathbb R, i=1,\dots,m$ are the roots of an $m$-th degree polynomial $P(t)=t^m+\sum_{r=1}^m(-1)^ra_rt^{m-r}$, then $\forall \la\in \mathbb{R}^n$, we have 
    \begin{equation}
    \label{eq:linear-combination}
		\sigma_k^{(n+m)}(\lambda,y)
				=\sigma_k^{(n)}(\lambda)+a_1\sigma_{k-1}^{(n)}(\lambda)+\cdots+a_m\sigma_{k-m}^{(n)}(\lambda).
			\end{equation}
	\end{lemma}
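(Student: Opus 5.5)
The plan is to expand $\sigma_k^{(n+m)}(\lambda,y)$ by splitting the variables into the two blocks $\lambda\in\mathbb{R}^n$ and $y\in\mathbb{R}^m$. Every $k$-element subset of the $n+m$ indices splits uniquely into $j$ indices from the $y$-block and $k-j$ indices from the $\lambda$-block. This gives the standard convolution identity
\begin{equation*}
\sigma_k^{(n+m)}(\lambda,y)=\sum_{j=0}^{m}\sigma_j^{(m)}(y)\,\sigma_{k-j}^{(n)}(\lambda),
\end{equation*}
with the conventions $\sigma_0=1$ and $\sigma_i=0$ for $i<0$ or $i>n$. These conventions make the identity valid for every $k$, including $k<m$ and $k-j>n$. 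The identity can also be read off from the generating function
\begin{equation*}
\prod_{i=1}^n(1+\lambda_i t)\prod_{l=1}^m(1+y_l t)=\sum_{k\geqslant0}\sigma_k^{(n+m)}(\lambda,y)t^k,
\end{equation*}
by comparing coefficients of $t^k$ in the product of the two factors.

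The second step identifies $\sigma_j^{(m)}(y)$ with $a_j$ using Vieta's formulas. Since $y_1,\dots,y_m$ are the roots of the monic polynomial $P$, we have
\begin{equation*}
P(t)=\prod_{l=1}^m(t-y_l)=\sum_{r=0}^m(-1)^r\sigma_r^{(m)}(y)\,t^{m-r}.
\end{equation*}
Comparing this with $P(t)=t^m+\sum_{r=1}^m(-1)^r a_r t^{m-r}$ gives $\sigma_r^{(m)}(y)=a_r$ for $r=1,\dots,m$, and $\sigma_0^{(m)}(y)=1$. Substituting into the convolution identity yields \eqref{eq:linear-combination}.

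There is no real obstacle here. The only point needing care is the bookkeeping of the conventions. The terms $a_r\sigma_{k-r}^{(n)}$ with $k-r<0$ vanish, which matches the statement since $m<k$ in the paper's setting, and terms with $k-r>n$ likewise vanish. I would also remark that Hypothesis (RR) is used only to guarantee that $y$ is real, so that $(\lambda,y)\in\mathbb{R}^{n+m}$. The algebraic identity itself holds over $\mathbb{C}$ for any roots.
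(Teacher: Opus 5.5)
Your proposal is correct and follows essentially the same route as the paper: the block-splitting identity $\sigma_k^{(n+m)}(\lambda,y)=\sum_{r=0}^m \sigma_r^{(m)}(y)\,\sigma_{k-r}^{(n)}(\lambda)$, followed by Vi\`ete's formulas to identify $\sigma_r^{(m)}(y)=a_r$. Your generating-function justification and the remark on the conventions are just slightly more explicit versions of what the paper states.
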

		
    \begin{proof}
			Let $\lambda=(\lambda_1,\dots,\lambda_n)\in\mathbb R^n$ and $y=(y_1,\dots,y_m)\in\mathbb R^m$.  
			Set $x=(\lambda,y)\in\mathbb R^{n+m}$.
			For $r\ge 0$, let $e_r(y)$ denote the $r$-th elementary
			symmetric polynomial in $y_1,\dots,y_m$, with the convention $e_0\equiv 1$.
			Then for every integer $k\ge 0$, by the definition of $\s_k$, we have
			\begin{equation}\label{eq:lifting-identity}
				\sigma_k^{(n+m)}(x)\;=\;\sum_{r=0}^{m} e_r(y)\,\sigma_{k-r}^{(n)}(\lambda).
			\end{equation}
				By Vi\`ete's formulas,
			\[
				e_r(y)=a_r,\qquad r=1,\dots,m,
			\]
			which yields
            \eqref{eq:linear-combination} combining \eqref{eq:lifting-identity}.		
		\end{proof}
	
	Next, we recall some previous lemmas from \cite{HZ, Zhang}.
	\begin{lemma}\label{xk}
		Suppose $x^{\downarrow} \in \Gamma_k^n$. Then, we have
		\[
		x_k \leqslant C \left(\s_k^{\frac{1}{k}} (x) + |x_{n}|\right)
		\]
		for some constant $C$ depending on $n$ and $k$.
	\end{lemma}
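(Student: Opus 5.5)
We argue by splitting according to whether $x_k$ is large compared with $|x_n|$. Write $x=x^{\downarrow}$, so $x_1\geqslant\cdots\geqslant x_n$. If $x_k\leqslant N|x_n|$ for a large constant $N=N(n,k)$ fixed below, the estimate holds trivially with $C=N$. So we may assume $x_k> N|x_n|\geqslant 0$; in particular $x_1,\dots,x_k>0$.

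In that case split the indices into $P=\{i: x_i\geqslant 0\}$ and $Q=\{i:x_i<0\}$. Let $x'=(x_i)_{i\in P}$ and $x''=(x_i)_{i\in Q}$. Note $\{1,\dots,k\}\subset P$, and every entry of $x''$ has absolute value at most $|x_n|$. Expanding the elementary symmetric function over the two blocks gives
\[
\s_k(x)=\sum_{j=0}^{k}\s_{k-j}(x')\,\s_j(x'')\geqslant \s_k(x')-\sum_{j=1}^{k}\binom{n}{j}|x_n|^j\,\s_{k-j}(x').
\]

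The key step is a comparison inequality for the nonnegative vector $x'$:
\[
\s_{k-j}(x')\,x_k^{\,j}\leqslant C(n,k)\,\s_k(x'), \qquad 1\leqslant j\leqslant k.
\]
To see this, take any $(k-j)$-subset $S\subset P$. The set $\{1,\dots,k\}\setminus S$ has at least $j$ elements, so we can choose $T\subset\{1,\dots,k\}\setminus S$ with $|T|=j$. Each $x_i$ with $i\in T$ satisfies $x_i\geqslant x_k$, hence
\[
\prod_{i\in S}x_i\cdot x_k^{\,j}\leqslant \prod_{i\in S\cup T}x_i .
\]
The right-hand side is a term of $\s_k(x')$, and all terms of $\s_k(x')$ are nonnegative. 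A given $k$-subset arises as $S\cup T$ from at most $\binom{k}{j}$ choices of $S$. Summing over $S$ therefore yields the claim.

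Inserting this into the expansion gives
\[
\s_k(x)\geqslant \s_k(x')\Bigl(1-C\sum_{j\geqslant1}(|x_n|/x_k)^j\Bigr)\geqslant \s_k(x')\bigl(1-C'/N\bigr).
\]
Choosing $N$ large makes the bracket at least $\tfrac12$. Also $\s_k(x')\geqslant x_1\cdots x_k\geqslant x_k^k$, since all terms are nonnegative. Hence $x_k\leqslant 2^{1/k}\s_k(x)^{1/k}$, and combining the two cases proves the lemma.

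The only delicate point is the comparison inequality, namely the counting argument that attaches $j$ top-$k$ indices to each $(k-j)$-subset. Beyond that step, the argument uses only the sign structure of the expansion.
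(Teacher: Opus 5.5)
Your proof is correct, but it is organized differently from the paper's. The paper uses the cone condition at the outset. From $x_k+\cdots+x_n>0$ it gets $|x_n|\leqslant(n-k+1)x_k$, so any term of $\s_k$ with $j\geqslant1$ negative factors is, in absolute value, at most $x_1\cdots x_{k-j}|x_n|^j\leqslant C\,x_1\cdots x_{k-1}|x_n|$. This gives the single inequality \eqref{c}, $x_1\cdots x_k\leqslant\s_k+C\,x_1\cdots x_{k-1}|x_n|$, valid on all of $\Gamma_k^n$, and $x_k$ is compared with $|x_n|$ only at the very end.

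You split into cases first, so that in the nontrivial case $|x_n|/x_k<1/N$. You then absorb the negative block through the convolution identity and your counting bound $\s_{k-j}(x')\,x_k^{j}\leqslant\binom{k}{j}\s_k(x')$.

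Your route buys generality. It never uses $\Gamma_k^n$ beyond $\s_k(x)\geqslant0$; in Case 2 the positivity of $\s_k$ even comes out of the argument. So it proves the estimate for every ordered $x$ with $\s_k(x)\geqslant0$.

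The paper's route buys the intermediate inequality \eqref{c}, with coefficient $1$ in front of $\s_k$. This is reused in Claim 1 of the proof of Lemma \ref{key}, where $\s_k$ is replaced by the bound $C\tl_2\cdots\tl_{k-1}\tl_k^2$. The bare statement of the lemma would not suffice there, since $(\tl_2\cdots\tl_{k-1}\tl_k^2)^{1/k}$ is not small compared with $\tl_k$. Your argument can be upgraded to give \eqref{c} on $\Gamma_k^n$ as well: in Case 2 use $\s_k(x')\leqslant\binom{n}{k}x_1\cdots x_k$, and Case 1 is trivial in the cone. As written, however, it only produces $\s_k(x)\geqslant\frac12\s_k(x')$.
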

	
	\begin{proof}
		
		Since $x_k + \cdots + x_{n} > 0$, we have $|x_{n}| \leqslant (n-k+1) x_k$, and hence
		\begin{equation}\label{c}
			x_1 \cdots x_{k-1} x_k \leqslant \s_k + C (n, k) x_1 \cdots x_{k-1} |x_{n}|.  
		\end{equation}
		Therefore, we obtain
		\begin{equation}
			\label{c1}
			x_k \leqslant 2C(n,k) \left(\s_k^{\frac{1}{k}}+|x_{n}|\right).
		\end{equation}
		This completes the proof of the lemma.
	\end{proof}

Set
	\[
	q_k = \frac{\s_k}{\s_{k-1}}, \; \s_{k;i} = \frac{\partial \s_{k+1}}{\partial \lambda_i} \; \mbox{and} \; q_{k;i} = \frac{\s_{k;i}}{\s_{k-1;i}}.
	\] 
	Note that $q_1 = \s_1$ since $\s_0 \equiv 1$.
	Recall the following lemmas.

\begin{lemma}\label{HS1}
If $\lambda \in \Gamma_k^n$, then for any $\xi\in \mathbb{R}^n$,
\begin{align*}
   -\partial^2_\xi q_{2;i_1,\cdots,i_{k-2}}\geqslant \frac{|[\xi]_{i_1,\cdots,i_{k-2}}^\perp|^2}{\s_1(\lambda|i_1,\cdots,i_{k-2})}.
\end{align*}
Here for any $1\leqslant \ell\leqslant k-2$, $[\xi]_{i_1,\cdots,i_{\ell}}$ denotes the vector that is obtained by throwing out the $i_1$-th,$\cdots,$ $i_{\ell}$-th components of $\xi$  and $[\xi]_{i_1,\cdots,i_{\ell}}^\perp$ denotes orthogonal part of $[\xi]_{i_1,\cdots,i_{\ell}}$ with respect to $[\lambda]_{i_1,\cdots,i_{\ell}}.$
Moreover,
\begin{align*}
       -\partial^2_{\xi}q_{k-1;i_1}\geqslant -\sum_{i\in\{1,\cdots, n\}\setminus \{i_1\}}\frac{\lambda_i^2\partial^2_{\xi}q_{k-2;i_1,i}}{(k-1)(q_{k-1;i_1,i}+\lambda_i)^2}.
   \end{align*}   
\end{lemma}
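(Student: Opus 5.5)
The plan is to reduce both inequalities to explicit quotient identities and then use concavity of simple one- and two-variable functions. Throughout, the admissibility of the reduced vectors comes from Lemma \ref{sigmak}(5). Applied iteratively to $\lambda\in\Gamma_k^n$, it gives $\lambda|i_1\cdots i_\ell\in\Gamma_{k-\ell}^n$. In particular, $\mu:=\lambda|i_1,\dots,i_{k-2}\in\Gamma_2^n$, so $\s_1(\mu)>0$, and all denominators below are positive.

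\emph{First inequality.} Set $\mu=[\lambda]_{i_1,\dots,i_{k-2}}$, viewed as a vector in $\mathbb{R}^{n-k+2}$, and put $s=\s_1(\mu)$. Since $2\s_2=\s_1^2-|\mu|^2$, we have
\[
q_2(\mu)=\tfrac12 s-\tfrac{|\mu|^2}{2s}.
\]
The first term is linear, so only $f(\mu)=|\mu|^2/(2s)$ contributes to the second derivative. Write $\eta=[\xi]_{i_1,\dots,i_{k-2}}$ (derivatives in the removed directions vanish) and $\mathbf 1=(1,\dots,1)$. A direct computation gives
\[
\partial^2_\eta f=\frac{|\eta|^2}{s}-\frac{2(\mu\cdot\eta)(\mathbf 1\cdot\eta)}{s^2}+\frac{|\mu|^2(\mathbf 1\cdot\eta)^2}{s^3}=\frac{1}{s}\Big|\eta-\frac{\mathbf 1\cdot\eta}{s}\,\mu\Big|^2 .
\]
Since $\eta-t\mu$ has length at least the distance from $\eta$ to the line $\mathbb{R}\mu$, which is $|\eta^\perp|$, we get $-\partial^2_\xi q_{2;i_1,\dots,i_{k-2}}=\partial^2_\eta f\geqslant|\eta^\perp|^2/s$. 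This is exactly the claim.

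\emph{Second inequality.} Let $\mu=\lambda|i_1$. By Lemma \ref{sigmak}(2) applied to $\mu$, we have $(k-1)\s_{k-1}(\mu)=\sum_{i\neq i_1}\lambda_i\,\s_{k-2}(\mu|i)$. By Lemma \ref{sigmak}(1), $\s_{k-2}(\mu)=\s_{k-2}(\mu|i)+\lambda_i\s_{k-3}(\mu|i)$. Dividing, and writing $Q_i$ for the quotient $\s_{k-2}(\lambda|i_1 i)/\s_{k-3}(\lambda|i_1 i)$ (the quantity denoted $q_{\cdot;i_1,i}$ in the statement, with the paper's index convention), we obtain
\[
q_{k-1;i_1}=\frac{1}{k-1}\sum_{i\neq i_1}h(\lambda_i,Q_i),\qquad h(x,q)=\frac{xq}{x+q}.
\]
Here $Q_i+\lambda_i=\s_{k-2}(\mu)/\s_{k-3}(\mu|i)>0$ by the cone properties, using Lemma \ref{sigmak}(5) twice. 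The key point is that $h$ is concave on $\{x+q>0\}$. Indeed, its Hessian is $-\frac{2}{(x+q)^3}(q,-x)^{T}(q,-x)\leqslant 0$, and no sign condition on $x$ or $q$ individually is needed.

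Hence, along $\xi$, the chain rule gives
\[
\partial^2_\xi h(\lambda_i,Q_i)=D^2h[(\xi_i,\partial_\xi Q_i)]+\partial_q h\,\partial^2_\xi Q_i\leqslant \frac{\lambda_i^2}{(Q_i+\lambda_i)^2}\,\partial^2_\xi Q_i .
\]
This uses $\partial^2_\xi\lambda_i=0$ and $\partial_q h=x^2/(x+q)^2$. Summing over $i$ and dividing by $k-1$ yields the stated lower bound for $-\partial^2_\xi q_{k-1;i_1}$.

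The main obstacle I expect is bookkeeping rather than analysis. One must check that the quotient appearing inside $\partial^2_\xi$ and the one in the denominator are the same function, given the paper's convention $\s_{k;i}=\partial\s_{k+1}/\partial\lambda_i$, and verify positivity of $Q_i+\lambda_i$ and of each reduced $\s_{j}$ from the cone inclusions. Once the decomposition into $h(\lambda_i,Q_i)$ is fixed, the joint concavity of $xq/(x+q)$ does all the work.
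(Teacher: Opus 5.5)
The paper states this lemma without proof (it is only recalled), so there is no argument of the paper's to compare against. Judged on its own, your proof is correct.

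For the first inequality, your computation gives the exact identity
\[
-\partial^2_\xi q_{2;i_1,\dots,i_{k-2}}=\tfrac1s\bigl|\eta-\tfrac{\mathbf 1\cdot\eta}{s}\mu\bigr|^2 .
\]
The bound then follows because $|\eta-t\mu|\geqslant|\eta^\perp|$ for every $t$, and only $s=\s_1(\mu)>0$ is used.

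For the second inequality, the representation $q_{k-1;i_1}=\frac{1}{k-1}\sum_{i\neq i_1}h(\la_i,Q_i)$ is right. It requires $k\geqslant3$ together with $\s_{k-2}(\la|i_1)>0$ and $\s_{k-3}(\la|i_1 i)>0$, both of which follow from Lemma \ref{sigmak}(5). Your Hessian of $h$ is correct, and the chain-rule step is exactly what is needed.

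The bookkeeping point you left open does need settling, and it goes against the printed statement. With the paper's convention $\s_{j;i_1 i}=\s_j(\la|i_1 i)$, your $Q_i$ is $q_{k-2;i_1,i}$. The weight $\partial_q h=\la_i^2/(\la_i+Q_i)^2$ therefore carries $q_{k-2;i_1,i}$ in the denominator, so what you have actually proved is
\[
-\partial^2_\xi q_{k-1;i_1}\geqslant-\sum_{i\neq i_1}\frac{\la_i^2\,\partial^2_\xi q_{k-2;i_1,i}}{(k-1)(q_{k-2;i_1,i}+\la_i)^2}.
\]
This is the correct form. The printed $q_{k-1;i_1,i}$ in the denominator is a misprint, and the literal version is false. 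For instance, take $n=k=4$, $i_1=1$, $\la=(1,1,1,\epsilon)$ and $\xi=(0,1,0,\epsilon/2)$.
\begin{itemize}
\item Writing $q_{3;1}=1/(1/\la_2+1/\la_3+1/\la_4)$, one finds $-\partial^2_\xi q_{3;1}=\epsilon^2/(1+2\epsilon)^2$.
\item Since $q_{3;1,4}=0$, the $i=4$ term of the printed right-hand side equals $\frac13\cdot\frac14=\frac1{12}$.
\item The other two terms are nonnegative.
\end{itemize}
So the printed right-hand side exceeds the left side for $\epsilon\leqslant\frac12$.

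You should therefore state the corrected inequality explicitly rather than writing $q_{\cdot;i_1,i}$. With that change, and with $k\geqslant3$ assumed for the second part, your proof is complete.
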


\begin{lemma}
If $\lambda \in \Gamma_k^n$, then for any $\zeta\in \mathbb{R}^n$,
    \begin{align*}
       -\s_{k-1}\partial^2_\zeta q_k\geqslant C\sum_{j=1}^{k-1}\lambda_1\cdots\hat{\lambda}_j\cdots\lambda_{k-1}\left|[\zeta]^\perp_{1,\cdots,\hat{j},\cdots,k-1}\right|^2.
   \end{align*}
\end{lemma}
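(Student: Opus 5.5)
Throughout I assume $\lambda$ is ordered, $\lambda_1\geqslant\cdots\geqslant\lambda_n$. This is implicit in the statement, since the right-hand side singles out $\lambda_1,\dots,\lambda_{k-1}$. The plan is to peel off one index at a time using the recursive inequality in Lemma \ref{HS1}, going from $q_k$ down to $q_2$, and then to close with the first inequality of Lemma \ref{HS1}.

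\emph{Descent to $q_2$.} The second inequality of Lemma \ref{HS1} expresses $-\partial^2_\zeta q$ for a vector with some indices deleted as a nonnegative combination of the same quantity with one more index deleted and one lower order. The weights have the form $\lambda_i^2/\big((\ell)(q_{\ell;I,i}+\lambda_i)^2\big)$. I would first restate it at top level (no index deleted) for $q_k$; by the same Huisken--Sinestrari argument,
\[
-\partial^2_\zeta q_k\geqslant-\sum_i \frac{\lambda_i^2\,\partial^2_\zeta q_{k-1;i}}{k\,(q_{k-1;i}+\lambda_i)^2}.
\]
Then I iterate $k-2$ times. All terms are nonnegative because every $q_{\ell;I}$ is concave on the relevant cone, using Lemma \ref{sigmak}(5). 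So, for each fixed $j\in\{1,\dots,k-1\}$, I may discard everything except the single chain that deletes the indices of $I_j:=\{1,\dots,k-1\}\setminus\{j\}$, in increasing order. At the bottom, the first inequality of Lemma \ref{HS1} gives
\[
-\partial^2_\zeta q_{2;I_j}\geqslant \frac{|[\zeta]^\perp_{I_j}|^2}{\s_1(\lambda|I_j)}.
\]
Summing over $j$ costs only a factor of $k-1$, since each chain is dominated by the full sum.

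\emph{Coefficient estimates.}
\begin{enumerate}
\item[(i)] For $\s_{k-1}$: since $\lambda\in\Gamma_k^n\subset\Gamma_{k-1}^n$, Lemma \ref{sigmak}(7) gives $\s_{k-1}\geqslant\lambda_1\cdots\lambda_{k-1}$.
\item[(ii)] For the bottom denominator: every entry of $\lambda|I_j$ is at most $\lambda_j$, so $\s_1(\lambda|I_j)\leqslant n\lambda_j$. Together with (i),
\[
\frac{\s_{k-1}}{\s_1(\lambda|I_j)}\geqslant \frac{\lambda_1\cdots\hat\lambda_j\cdots\lambda_{k-1}}{n}.
\]
\item[(iii)] For each intermediate weight: I need $q_{\ell;I,i}\leqslant C\lambda_i$, which makes the weight bounded below by a constant $c(n,k)>0$. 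Since indices are removed in increasing order, the vector $\lambda|I,i$ has all entries at most $\lambda_i$, except possibly $\lambda_j$ when $j<i$. Away from that exception, I would use the standard bound $q_\ell(x)\leqslant C(n,\ell)\,x_{\max}$ for $x\in\Gamma_\ell$. This follows from Newton--Maclaurin, $q_\ell\leqslant C\s_1$, together with $\s_1\leqslant n\,x_{\max}$.
\end{enumerate}

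\emph{Main obstacle.} The hard step is (iii) when $j<i$, where $\lambda_j$ (possibly much larger than $\lambda_i$) survives in the reduced vector. My first idea is to delete $j$-free indices in decreasing order instead, so that the surviving large entries are handled by Lemma \ref{sigmak}(6). Failing that, I would use the ratio bound $q_{\ell;I,i}\leqslant \s_{\ell}(\lambda|I,i)/\s_{\ell-1}(\lambda|I,i)$. In that ratio, both numerator and denominator contain $\lambda_j$ to the same power in the dominant terms, so $\lambda_j$ cancels. Justifying this cancellation, via Lemma \ref{sigmak}(1),(9) applied to the reduced vector, is the point I expect to need the most care. Once (iii) holds with $c(n,k)>0$, multiplying by $\s_{k-1}$ and combining (i)--(iii) yields the claimed inequality.
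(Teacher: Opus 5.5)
Your architecture is the right one. You restate the second inequality of Lemma \ref{HS1} at top level for $q_k$, iterate it down to $q_2$, and keep for each $j$ only the chain deleting $I_j=\{1,\dots,k-1\}\setminus\{j\}$. You then close with the first inequality of Lemma \ref{HS1} and multiply by $\s_{k-1}$ using (i) and (ii). The paper itself only recalls this lemma without proof, right after Lemma \ref{HS1}, which is exactly the tool you use. These steps, and the nonnegativity of the discarded terms, are fine.

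\textbf{The gap is step (iii).} You prove $q_{\ell;I,i}\leqslant C\lambda_i$ only when every surviving entry is at most $\lambda_i$. You leave open the case where $\lambda_j$ survives with $j<i$, and this case is unavoidable: when $j=1$ it occurs at every step after the first. Neither remedy you offer closes it:
\begin{itemize}
\item deleting in decreasing order only increases the number of surviving entries exceeding $\lambda_i$, so the bound via $x_{\max}$ gets worse;
\item the ``ratio bound'' is just the definition of $q$, and Lemma \ref{sigmak} (9) plays no role.
\end{itemize}

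\textbf{The fix.} Replace $x_{\max}$ by the $m$-th largest entry: for $x\in\Gamma^n_m$ with $x_1\geqslant\cdots\geqslant x_n$, one has $q_m(x)\leqslant C(n,m)\,x_m$.
\begin{itemize}
\item Lemma \ref{sigmak} (7) gives $\s_{m-1}(x)\geqslant x_1\cdots x_{m-1}$.
\item Since $|x_n|\leqslant (n-m+1)x_m$ (as in the proof of Lemma \ref{xk}), each product $x_{a_1}\cdots x_{a_m}$ with $a_1<\cdots<a_m$ is at most $Cx_1\cdots x_m$ in absolute value. Hence $\s_m(x)\leqslant Cx_1\cdots x_m$, and the bound on $q_m$ follows.
\end{itemize}

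Now let $J\subset\{1,\dots,k-1\}$ with $|J|=s$ be the deleted set right after deleting $i\in J$. Then $\lambda|J\in\Gamma^n_{k-s}$. Exactly $k-1-s$ indices of $\{1,\dots,k-1\}$ survive, so the $(k-s)$-th largest entry of $\lambda|J$ is $\lambda_k\leqslant\lambda_i$. Hence $q_{k-s}(\lambda|J)\leqslant C\lambda_k\leqslant C\lambda_i$. The weight $\lambda_i^2/\big((k-s+1)(\lambda_i+q_{k-s}(\lambda|J))^2\big)$ is therefore at least $c(n,k)>0$. This holds wherever $j$ sits and in any deletion order.

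Your cancellation idea can also be made rigorous. Set $w=\lambda|J$ and $y=w|j$. Lemma \ref{sigmak} (1) exhibits $q_{k-s}(w)$ as a weighted mean, with positive weights $\lambda_j\s_{k-s-2}(y)$ and $\s_{k-s-1}(y)$, of $q_{k-s-1}(y)$ and $\s_{k-s}(y)/\s_{k-s-1}(y)$. By Newton's inequality both are at most $C\s_1(y)$, which is at most $Cn\lambda_i$ in your increasing order.

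With (iii) in hand, your argument proves the lemma.
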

It follows from the above two lemmas that 
\begin{lemma}\label{decom}
Assume that $\zeta=(0,\zeta_2,\cdots,\zeta_{k-1},\zeta_k,\dots,\zeta_n)$ satisfying 
\begin{align*}
(\zeta_k,\dots,\zeta_n)\perp(\lambda_k,\dots,\lambda_n).
\end{align*}
If $\lambda \in \Gamma_k^n$, then
    \begin{align*}
       -\s_{k-1}\partial^2_\zeta q_k\geqslant C\frac{\sum_{j=1}^{k-1}\lambda_1\lambda_2\cdots\lambda_{k-1}\lambda_k^2\zeta_j^2}{\lambda_j^3}+C\lambda_1\lambda_2\cdots\lambda_{k-2}\sum_{p=k}^{n}\zeta_p^2.
   \end{align*}
\end{lemma}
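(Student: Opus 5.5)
We prove Lemma \ref{decom} by combining the preceding lemma (the lower bound for $-\s_{k-1}\partial^2_\zeta q_k$ in terms of $|[\zeta]^\perp_{1,\dots,\hat j,\dots,k-1}|^2$) with the structure of $\zeta$. Throughout we order $\lambda_1\geqslant\cdots\geqslant\lambda_n$. Since $\lambda\in\Gamma_k^n$, the entries $\lambda_1,\dots,\lambda_{k-1}$ are positive and $\lambda_k+\cdots+\lambda_n>0$ (Lemma \ref{xk}'s proof), so $\lambda_k>0$. The task is to bound each squared norm from below:
\[
S_j:=\left|[\zeta]^\perp_{1,\dots,\hat j,\dots,k-1}\right|^2 .
\]
Here $[\zeta]_{1,\dots,\hat j,\dots,k-1}=(\zeta_j,\zeta_k,\dots,\zeta_n)$, and we project orthogonally off $w_j:=(\lambda_j,\lambda_k,\dots,\lambda_n)$.

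First, compute $S_j$ exactly. Write $v=(\zeta_j,\zeta_k,\dots,\zeta_n)$. By hypothesis $(\zeta_k,\dots,\zeta_n)\perp(\lambda_k,\dots,\lambda_n)$, so $\langle v,w_j\rangle=\lambda_j\zeta_j$. Set $L:=\lambda_k^2+\cdots+\lambda_n^2$. Then
\[
S_j=\zeta_j^2+\sum_{p\geqslant k}\zeta_p^2-\frac{\lambda_j^2\zeta_j^2}{\lambda_j^2+L}
=\frac{L\,\zeta_j^2}{\lambda_j^2+L}+\sum_{p\geqslant k}\zeta_p^2 .
\]
Since $\lambda_k$ is the largest of $\lambda_k,\dots,\lambda_n$ and $\lambda_k>0$, we have $L\geqslant\lambda_k^2$. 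Also $\lambda_j\geqslant\lambda_k$ gives $\lambda_j^2+L\leqslant\lambda_j^2+(n-k+1)\lambda_k^2\leqslant C\lambda_j^2$, using $|\lambda_p|\leqslant(n-k+1)\lambda_k$ for $p\geqslant k$. Hence
\[
S_j\geqslant c\,\frac{\lambda_k^2}{\lambda_j^2}\zeta_j^2+\sum_{p\geqslant k}\zeta_p^2 .
\]
For $j=1$ the term $\zeta_1=0$ anyway.

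Second, insert this into the previous lemma. The weight there is $\lambda_1\cdots\hat\lambda_j\cdots\lambda_{k-1}=\lambda_1\cdots\lambda_{k-1}/\lambda_j$. The $\zeta_j$-part therefore produces
\[
C\,\frac{\lambda_1\cdots\lambda_{k-1}\lambda_k^2\zeta_j^2}{\lambda_j^3},
\]
which is exactly the first sum. For the $\zeta_p$-part we drop all $j$ except $j=k-1$. Its weight is $\lambda_1\cdots\lambda_{k-2}$, which gives the second term. All weights are nonnegative because $\lambda_1,\dots,\lambda_{k-1}>0$, so discarding terms is legitimate. The case $k=2$ is handled the same way, with an empty product equal to $1$.

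The main obstacle is the exact projection computation: we must check that the orthogonality hypothesis kills the cross terms, so that the $\zeta_j$ coefficient is $L/(\lambda_j^2+L)$. The remaining work is the comparison $\lambda_j^2+L\lesssim\lambda_j^2$, which relies on the admissibility bound for negative entries.
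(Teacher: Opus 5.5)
Your proposal is correct and follows the route the paper indicates. The paper derives this lemma from the preceding lower bound for $-\sigma_{k-1}\partial^2_\zeta q_k$ (deferring the details to Lemma 2.8 of Zhang), and the substance of that step is your projection computation: the orthogonality hypothesis reduces $|[\zeta]^\perp_{1,\dots,\hat j,\dots,k-1}|^2$ to $\frac{L\zeta_j^2}{\lambda_j^2+L}+\sum_{p\geqslant k}\zeta_p^2$, then $\lambda_k^2\leqslant L\leqslant C\lambda_j^2$ is applied, and for the $\zeta_p$-terms only the $j=k-1$ weight is kept. The one implicit input, $\lambda_k+\cdots+\lambda_n>0$ for ordered $\lambda\in\Gamma_k^n$, follows by iterating Lemma \ref{sigmak} (5).
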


\begin{lemma}\label{qk,con}
\begin{align*}
   \frac{\frac{\partial^2 q_k}{\partial\lambda_p\partial\lambda_q }}{q_k}=\frac{\frac{\partial^2 \s_k}{\partial\lambda_p\partial\lambda_q}}{\s_k}-\frac{\frac{\partial \s_k}{\partial\lambda_p}}{\s_k}\frac{\frac{\partial \s_{k-1}}{\partial\lambda_q}}{\s_{k-1}}-\frac{\frac{\partial \s_k}{\partial\lambda_q}}{\s_k}\frac{\frac{\partial \s_{k-1}}{\partial\lambda_p}}{\s_{k-1}}-\frac{\frac{\partial^2 \s_{k-1}}{\partial\lambda_p\partial\lambda_q}}{\s_{k-1}}+2\frac{\frac{\partial \s_{k-1}}{\partial\lambda_p}\frac{\partial \s_{k-1}}{\partial\lambda_q}}{\s_{k-1}^2}.
\end{align*}
\end{lemma}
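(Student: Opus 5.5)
The identity is a purely algebraic consequence of the quotient structure $q_k=\s_k/\s_{k-1}$. I would prove it by logarithmic differentiation, on the set where $\s_k\neq 0$ and $\s_{k-1}\neq 0$; in particular this includes $\Gamma_k^n$, where both are positive. Write $\partial_p=\partial/\partial\lambda_p$ and set
\[
A_p=\frac{\partial_p \s_k}{\s_k},\qquad B_p=\frac{\partial_p \s_{k-1}}{\s_{k-1}}.
\]

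The first step is to take logarithms, $\ln q_k=\ln \s_k-\ln \s_{k-1}$, and differentiate once. This gives $\partial_p q_k/q_k = A_p - B_p$.

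The second step is to relate the normalized Hessian of $q_k$ to the Hessian of $\ln q_k$ through the elementary identity
\[
\frac{\partial_p\partial_q q_k}{q_k}=\partial_p\partial_q \ln q_k+\frac{\partial_p q_k}{q_k}\,\frac{\partial_q q_k}{q_k}.
\]
The third step is to compute each term from the log decomposition. Since
\[
\partial_p\partial_q\ln \s_k=\frac{\partial_p\partial_q \s_k}{\s_k}-A_pA_q,
\]
and the analogous formula holds for $\s_{k-1}$, we get
\[
\partial_p\partial_q \ln q_k=\frac{\partial_p\partial_q\s_k}{\s_k}-A_pA_q-\frac{\partial_p\partial_q\s_{k-1}}{\s_{k-1}}+B_pB_q .
\]

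The last step is to add the product term
\[
(A_p-B_p)(A_q-B_q)=A_pA_q-A_pB_q-A_qB_p+B_pB_q .
\]
The two $A_pA_q$ terms cancel and the two $B_pB_q$ terms combine. What remains is
\[
\frac{\partial_p\partial_q\s_k}{\s_k}-A_pB_q-A_qB_p-\frac{\partial_p\partial_q\s_{k-1}}{\s_{k-1}}+2B_pB_q,
\]
which is exactly the stated formula.

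There is no genuine obstacle here. The only points needing care are the sign bookkeeping in the cross terms and noting the domain on which the denominators are nonzero.
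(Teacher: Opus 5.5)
Your proof is correct and is essentially the paper's argument: the paper states this lemma without proof, as the routine second derivative of the quotient $q_k=\sigma_k/\sigma_{k-1}$, and your logarithmic differentiation via $\ln q_k=\ln\sigma_k-\ln\sigma_{k-1}$ is the same computation, with the cross terms and the factor $2$ coming out right. One small imprecision: where $\sigma_k$ or $\sigma_{k-1}$ is negative you should write $\ln|\sigma_k|$ and $\ln|\sigma_{k-1}|$ (or apply the quotient rule directly), although the identity is rational and so holds wherever $\sigma_k\sigma_{k-1}\neq 0$ anyway.
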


\section{A Key Concavity Lemma}

In this section, we establish a concavity inequality for sums of Hessian operators.
We begin by proving the following key inequality for the $k$-Hessian operator.
Corresponding concavity inequalities for the sum operators follow as direct corollaries.
	\begin{lemma}\label{key}
    There exist a sufficiently small \(\delta\in(0,1)\) and a sufficiently large \(K_0\), depending only on \(n\) and \(k\), such that if
    $\lambda\in \Gamma^n_k$ and \[\lambda_1\geq \lambda_2\geq \cdots\geq \lambda_n \geqslant -\delta \lambda_1,\quad\lambda_1 /\s_k^{\frac{1}{k}}>K_0,\]
		then the following inequality 
		\begin{equation}
			\label{positive}
			- \sum_{p\neq q} \frac{\s_k^{pp,qq} \xi_p \xi_q}{\s_k}
			+ 2 \frac{(\sum_i \s_k^{ii} \xi_i)^2}{\s_k^2} 
			+ 2 \sum_{i > 1} \frac{\s_k^{ii} \xi_i^2}{(1 + 2\delta )\lambda_1 \s_k} 
			\geqslant (1+\gamma) \frac{\s_k^{11} \xi_1^2}{\lambda_1 \s_k}
		\end{equation}
        holds for every vector $\xi = (\xi_1, \ldots, \xi_n) \in \mathbb{R}^{n}$, with 
        $\gamma = 1/(1+16k)$.
	\end{lemma}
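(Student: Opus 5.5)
The plan is to follow Zhang's strategy and reduce \eqref{positive} to the concavity of the quotient $q_k=\s_k/\s_{k-1}$. First rewrite the left-hand side through $\log$ derivatives. Write $\partial_\xi$ for the directional derivative in $\lambda$ and set $\s_k^{pp}=\partial\s_k/\partial\lambda_p$. Then
\[
-\sum_{p\neq q}\frac{\s_k^{pp,qq}\xi_p\xi_q}{\s_k}+\frac{(\partial_\xi\s_k)^2}{\s_k^2}=-\partial_\xi^2\log\s_k ,
\]
and $\log\s_k=\log q_k+\log\s_{k-1}$. Lemma \ref{qk,con} gives
\[
-\partial^2_\xi\log\s_k=-\frac{\partial^2_\xi q_k}{q_k}+\frac{(\partial_\xi q_k)^2}{q_k^2}-\partial^2_\xi\log\s_{k-1}.
\]
The left-hand side of \eqref{positive} therefore equals
\[
-\frac{\partial_\xi^2 q_k}{q_k}+\frac{(\partial_\xi q_k)^2}{q_k^2}-\partial^2_\xi\log\s_{k-1}+\frac{(\partial_\xi\s_k)^2}{\s_k^2}+2\sum_{i>1}\frac{\s_k^{ii}\xi_i^2}{(1+2\delta)\lambda_1\s_k}.
\]
The term $-\partial^2_\xi\log\s_{k-1}$ is not sign-definite. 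I plan to absorb it against the extra square $(\partial_\xi\s_k)^2/\s_k^2$ together with the last sum, using the identity $\partial\s_{k-1}/\s_{k-1}=\partial\s_k/\s_k-\partial q_k/q_k$ to trade squares.

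Second, I will quantify the regime. Since $\lambda_n\ge-\delta\lambda_1$, Lemma \ref{xk} gives $\lambda_k\le C(\s_k^{1/k}+\delta\lambda_1)\le C(K_0^{-1}+\delta)\lambda_1$. Hence all of $\lambda_k,\dots,\lambda_n$ are $O(\epsilon)\lambda_1$ with $\epsilon=K_0^{-1}+\delta$. From this I get $\s_k^{11}=\s_{k-1;1}$ and Lemma \ref{sigmak} (4), (9). The target right-hand side $\s_k^{11}\xi_1^2/(\lambda_1\s_k)$ is then comparable to $\xi_1^2/\lambda_1^2$ up to factors $1+O(\epsilon)$, because $\lambda_1\s_{k-1;1}\approx\s_k$ when $\lambda_1$ dominates.

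The main contribution then comes from the square $2(\partial_\xi\s_k)^2/\s_k^2$, which contains $2(\s_k^{11}\xi_1)^2/\s_k^2\approx 2\xi_1^2/\lambda_1^2$. This is almost twice what is needed, and the surplus produces $\gamma$.

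Third, and this is the main obstacle, are the cross terms $\xi_1\xi_j$, $j>1$. They appear both in the square $(\sum_i\s_k^{ii}\xi_i)^2$ and in $-\s_k^{11,jj}\xi_1\xi_j$. I will decompose $\xi=\xi_1e_1+\zeta$ and split $\zeta$ into a component parallel to $(\lambda_k,\dots,\lambda_n)$ in the small block and a perpendicular component. This is exactly the setting of Lemma \ref{decom}. That lemma bounds $-\s_{k-1}\partial^2_\zeta q_k$ from below by
\[
C\sum_{j<k}\frac{\lambda_1\cdots\lambda_{k-1}\lambda_k^2\zeta_j^2}{\lambda_j^3}+C\lambda_1\cdots\lambda_{k-2}\sum_{p\ge k}\zeta_p^2,
\]
which controls the perpendicular and middle components. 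The parallel component along $\lambda$ is handled by the homogeneity of $q_k$ (it is linear along $\lambda$). The mixed terms are then handled by Cauchy--Schwarz with weight $\gamma$, split against the good term $2\sum_{i>1}\s_k^{ii}\xi_i^2/((1+2\delta)\lambda_1\s_k)$. The factor $(1+2\delta)$ is the slack that the negative entries $\lambda_i\ge-\delta\lambda_1$ cost in estimates such as $\s_k^{11,jj}\le\s_k^{jj}/\lambda_1\cdot(1+O(\delta))$. I would first try the crude bound $|\s_k^{11,jj}|\le C\s_k^{jj}/\lambda_1$ and check that the loss is at most a $1/(8k)$ fraction of the surplus, which should yield $\gamma=1/(1+16k)$ once $\delta$ is small and $K_0$ is large depending on $n,k$.
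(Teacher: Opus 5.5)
\textbf{There is a genuine gap: the hard case is missing.} Your plan rests on the claim that $\lambda_1\s_{k-1;1}\approx\s_k$. That would make the right-hand side $\approx\xi_1^2/\lambda_1^2$, and a factor-two surplus would give $\gamma$. The claim fails exactly in the regime that makes the lemma hard. Since $\s_k=\lambda_1\s_{k-1;1}+\s_{k;1}$, it needs $|\s_{k;1}|\ll\s_k$. A negative $\lambda_n$ allows $-\s_{k;1}\gg\s_k$ under all the hypotheses. For $k=2$ and $\lambda=(1,\epsilon,0,\dots,0,-\epsilon+\epsilon^2+\epsilon^3)$ one has $\s_2=2\epsilon^3+\epsilon^4$ and $\s_{1;1}=\epsilon^2+\epsilon^3$. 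Hence $\lambda_1\s_{1;1}/\s_2\approx 1/(2\epsilon)$, while $\lambda_n\geqslant-\delta\lambda_1$ and $\lambda_1/\s_2^{1/2}\to\infty$. So the target $\s_k^{11}\xi_1^2/(\lambda_1\s_k)$ exceeds $\xi_1^2/\lambda_1^2$ by an unbounded factor, and nothing in your proposal produces a $\xi_1^2$-coefficient of order $-\s_{k;1}/\s_k$.

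The paper treats $\s_{k;1}>-\s_k/(16k)$ roughly as you envisage, using concavity of $q_k$ plus a surplus of size $\xi_1^2/\lambda_1^2$. The case $\s_{k;1}\leqslant-\s_k/(16k)$ needs a separate argument (all in the normalized variables $\tl$):
\begin{itemize}
\item Claims 1--3: $\tl_k\leqslant C|\tl_n|$, $-\s_{k+1}\approx\tl_1\cdots\tl_{k-1}\tl_k^2\gtrsim\s_k$, and $\s_{k-1;1}\approx\tl_2\cdots\tl_k^2$.
\item The decomposition $\xi=(1+a)\xi_1\tl-a\xi_1e_1+\zeta$ with $(\zeta_k,\dots,\zeta_n)\perp(\tl_k,\dots,\tl_n)$. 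Homogeneity kills only the $\tl$-component. The leftover $-a\xi_1e_1$ produces $-2a^2\s_{k;1}\xi_1^2/\s_k$ inside $-\partial_\xi^2q_k/q_k$.
\item Lemma \ref{sigmak} (3), which bounds $\sum_{i>\ell}\s_k^{ii}\xi_i^2/\s_k$ below by roughly $-(k+1-\ell)(1+a)^2\s_{k;1}\xi_1^2/\s_k$.
\end{itemize}
The good term carries the factor $2$ and $k+1-\ell\geqslant2$. So these combine to $-2\tilde a\,\s_{k;1}\xi_1^2/\s_k$ with $\tilde a=a^2+2(1+a)^2\geqslant 2/3$. It is this quantity, $\geqslant\frac43(\s_{k-1;1}/\s_k-1)\xi_1^2$, that beats $(1+\gamma)\s_{k-1;1}/\s_k$, not the $e_1$-part of a square.

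\textbf{Even in the benign case, the mechanism you describe is not the one that works.}
\begin{itemize}
\item The square $(\partial_\xi\s_k)^2/\s_k^2$ vanishes for every $\xi$ tangent to the level set of $\s_k$, including ones with $\xi_1\neq0$. So it cannot be the main source, and its $\xi_1\xi_j$ cross terms cannot be peeled off.
\item Conversely, $-\partial_\xi^2\log\s_{k-1}$ is not indefinite. $\log\s_{k-1}$ is concave on $\Gamma_{k-1}\supset\Gamma_k$, and this term is the actual source of the surplus in the paper. One uses $(\partial_\xi\log q_k)^2+(\partial_\xi\log\s_k)^2\geqslant\frac12(\partial_\xi\log\s_{k-1})^2$. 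One also uses that $\s_{k-1}$ is affine in $\lambda_1$, so $-\partial_1^2\log\s_{k-1}=(\s_{k-2;1}/\s_{k-1})^2$, and that $\s_{k-2;1}/\s_{k-1}\geqslant(1-C\delta')/\lambda_1$. This yields $-\partial_\xi^2q_k/q_k+(1+\frac1{2k})(\s_{k-2;1}\xi_1/\s_{k-1})^2-2\sum_{j>1}I_{1j}\xi_1\xi_j$.
\item Your crude bound $|\s_k^{11,jj}|\leqslant C\s_k^{jj}/\lambda_1$, followed by Cauchy--Schwarz against $\s_k^{jj}\xi_j^2/(\lambda_1\s_k)$, cannot close. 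In the variables $\tl$, for $j\geqslant k$ it costs a $\xi_1^2$-coefficient of order $\s_{k-2;1j}^2/(\s_{k-1;j}\s_k)\sim\tl_2\cdots\tl_{k-1}/\s_k$, e.g.\ when $\tl_2,\dots,\tl_{k-1}\sim1$. The target is at most $C\tl_2\cdots\tl_k/\s_k$. So you lose the unbounded factor $1/\tl_k\geqslant c/\delta'$, where $\delta'$ plays the role of your $\epsilon$.
\end{itemize}
The crude bound discards the cancellation between the cross terms of the square and those of the Hessian. The paper keeps it: after passing to $\log\s_{k-1}$, the cross coefficients are $I_{1j}=-(\s_{k-2;1j}^2-\s_{k-1;1j}\s_{k-3;1j})/\s_{k-1}^2$. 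These are of size $O(\tl_k^2/\tl_j^2)$ for $j<k$ and $O(1)$ for $j\geqslant k$. They are absorbed by the good term via the lower bounds \eqref{I-j-1}--\eqref{I-j-3} on $\s_{k-1;j}/\s_k$, at a cost of only $C\delta'^{1/(k-1)}/\varepsilon$.
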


	\begin{proof}
        Let $$\tl_i=\frac{\la_i}{\la_1}, \quad 1\leqslant i \leqslant n.$$
		As the inequality is homogeneous of degree $-2$, it is equivalent to verify that 
		\[
		- \sum_{p\neq q} \frac{\s_k^{pp,qq}(\tl)\xi_p \xi_q}{\s_k(\tl)}
		+ 2 \frac{(\sum_i \s_k^{ii}(\tl) \xi_i)^2}{\s_k(\tl)^2} 
		+ 2 \sum_{i > 1} \frac{\s_k^{ii}(\tl) \xi_i^2}{(1 + 2\delta )\s_k(\tl)} 
		\geqslant (1+\gamma) \frac{\s_k^{11}(\tl) \xi_1^2}{\s_k(\tl)},
		\]
		Note that
		\[
		\tl_1=1, \; \tl_n\geqslant -\de \; \mbox{and}\; 
		\s_k(\tl)=\frac{\s_k(\la)}{\la_1^k}<K_0^{-k}.
		\]
		We obtain that
		\begin{equation}
			\label{main}
			\begin{aligned}
				&- \sum_{p\neq q} \frac{\s_k^{pp,qq} \xi_p \xi_q}{\s_k}
				+ 2 \frac{(\sum_i \s_k^{ii} \xi_i)^2}{\s_k^2}\\
				= &\; - \frac{\partial_\xi^2 q_k}{q_k} + (\partial_\xi \log q_k)^2
				- \partial_\xi^2 \log \s_{k-1} + (\partial_\xi \log\s_k)^2\\
				\geqslant &\; - \frac{\partial_\xi^2 q_k}{q_k} + \frac{1}{2} (\partial_\xi \log \s_{k-1})^2 - \partial_\xi^2 \log \s_{k-1}.
			\end{aligned}
		\end{equation}
		Here we used Cauchy-Schwarz inequality
		\[\begin{aligned}
			( \partial_\xi \log q_k )^2 = &\; ( \partial_\xi \log \s_k - \partial_\xi \log \s_{k-1} )^2 \\
			\geqslant &\; \frac{1}{2} (\partial_\xi \log \s_{k-1})^2 - (\partial_\xi \log \s_{k})^2.
		\end{aligned}\]
		Define
		\[
		\hat \xi = (0, \xi_2, \cdots, \xi_{n}).
		\]
		By $\partial_\xi = \xi_1 \partial_1 + \partial_{\hat \xi}$,
		a direct calculation shows that
		\begin{equation}\label{f1}
			\partial_\xi \log \s_{k-1} = \frac{\s_{k-2;1} \xi_1}{\s_{k-1}} + \partial_{\hat \xi} \log \s_{k-1}    
		\end{equation}
		and
		\begin{equation}\label{f2}
          \begin{aligned}
			\partial_\xi^2 \log \s_{k-1} = &\; \partial_{\hat \xi}^2 \log \s_{k-1} 
			- \Big( \frac{\s_{k-2; 1} \xi_1}{\s_{k-1}} \Big)^2 \\
			&\; + 2 \sum_{j>1} \frac{\s_{k-1} \s_{k-3;1j} - \s_{k-2;1} \s_{k-2;j}}{\s_{k-1}^2} \xi_1 \xi_j. 
            \end{aligned}
		\end{equation}
		We note that \begin{equation}\label{concavity}
			- \partial_{\hat \xi}^2 \log \s_{k-1} = - \sum_{i=1}^{k-1} \frac{\partial_{\hat \xi}^2 q_i}{q_i} + \sum_{i=1}^{k-1} (\partial_{\hat \xi} \log q_i)^2 \geqslant \sum_{i=1}^{k-1} (\partial_{\hat \xi} \log q_i)^2.
		\end{equation}
		By \eqref{f1}, \eqref{f2} and \eqref{concavity},
		we get
		\[\begin{aligned}
			& \frac{1}{2} (\partial_\xi \log \s_{k-1})^2 - \partial_\xi^2 \log \s_{k-1}\\
			\geqslant &\; \frac{1}{2} \Big(\frac{\s_{k-2;1} \xi_1}{\s_{k-1}} + \sum_{i=1}^{k-1} \partial_{\hat \xi} \log q_i\Big)^2 + \sum_{i=1}^{k-1} (\partial_{\hat \xi} \log q_i)^2\\
			&\; +\Big( \frac{\s_{k-2; 1} \xi_1}{\s_{k-1}} \Big)^2
			- 2 \sum_{j>1} \frac{\s_{k-1} \s_{k-3;1j} - \s_{k-2;1} \s_{k-2;j}}{\s_{k-1}^2} \xi_1 \xi_j.
		\end{aligned}
		\]
		By Cauchy-Schwarz inequality, we see that
		\[\begin{aligned}
			\Big(\frac{\s_{k-2;1} \xi_1}{\s_{k-1}} + \sum_{i=1}^{k-1} \partial_{\hat \xi} \log q_i\Big)^2
			\geqslant \frac{1}{k} \Big(\frac{\s_{k-2;1} \xi_1}{\s_{k-1}} \Big)^2
			- \sum_{i=1}^{k-1} (\partial_{\hat \xi} \log q_i)^2.
		\end{aligned}\]
		Let
		\[
		I_{1j} : = \frac{\s_{k-1} \s_{k-3;1j} - \s_{k-2;1} \s_{k-2;j}}{\s_{k-1}^2}. 
		\]
		We then obtain from the above two inequalities and \eqref{main} that
			\begin{equation}
				\label{main-1}
				\begin{aligned}
					& - \sum_{p\neq q} \frac{\s_k^{pp,qq} \xi_p \xi_q}{\s_k}
					+ 2 \frac{(\sum_i \s_k^{ii} \xi_i)^2}{\s_k^2}\\
					\geqslant &\; - \frac{\partial_\xi^2 q_k}{q_k} + \Big( 1 + \frac{1}{2k} \Big) \Big(\frac{\s_{k-2;1} \xi_1}{\s_{k-1}}\Big)^2 - 2 \sum_{j>1} I_{1j} \xi_1 \xi_j.
				\end{aligned}
			\end{equation}
			
			Now we deal with the last two terms in the above inequality. By the assumption we note that $-\tilde \lambda_n$ is not larger than $\delta\ll 1$. From Lemma \ref{xk}, we have 
            \begin{align}\label{tl k}
                \tl_k\leqslant C'(\de+\la_1^{-1}\s_k^{\frac{1}{k}}(\la))\leqslant C\de'
            \end{align}
            where $\de'=2\max\lbrace \de,K_0^{-1}\rbrace.$
			Then we have
			\[
			|\s_{k-1;1}|\leqslant C\tl_2 \cdots \tl_{k-1}\tl_k  \leqslant C\delta' \s_{k-1},
			\]
			
			By Lemma \ref{sigmak} (1), we obtain that
			\begin{equation}
				\label{s-k-2-1}
				\frac{\s_{k-2;1}}{\s_{k-1}} = \frac{\s_{k-1} - \s_{k-1;1}}{\tl_1 \s_{k-1}} \geqslant \frac{1 - C \delta'}{\tl_1},
			\end{equation}
			where $C$ depends on $n$ and $k$. From Lemma \ref{sigmak} (1), we can derive that
			\[
			\s_{k-2;1} \s_{k-2;j} - \s_{k-1} \s_{k-3;1j} 
			= \s_{k-2;1j}^2 - \s_{k-1;1j} \s_{k-3;1j} \geqslant 0,
			\] 
			where we used Newton's inequality.
			For $1 < j \leqslant k-1$,  since $|\tl_{n}| \leqslant (n-k) \tl_k$ we have
			\[
			\s_{k-2;1j} 
			\leqslant C \frac{\tl_1 \cdots \tl_k}{\tl_1 \tl_j}
			\leqslant C \frac{\s_{k-1} \tl_k}{\tl_j}
			\]
			and
			\[
			|\s_{k-1;1j}| \s_{k-3;1j} \leqslant C \frac{\tl_1 \cdots \tl_k^2}{\tl_1 \tl_j} 
			\frac{\tl_1 \cdots \tl_{k-1}}{\tl_1 \tl_j} \leqslant C \Big(\frac{\s_{k-1} \tl_k}{\tl_j}\Big)^2.
			\]
			On the other hand, for $k \leqslant j \leqslant n$, we have 
			\[
			\s_{k-2;1j} \leqslant C \frac{\tl_1 \cdots \tl_{k-1}}{\tl_1} \leqslant C \s_{k-1}
			\]
			and
			\[
			|\s_{k-1;1j}| \s_{k-3;1j} \leqslant C \frac{\tl_1 \cdots \tl_k}{\tl_1} 
			\frac{\tl_1 \cdots \tl_{k-2}}{\tl_1} \leqslant C \s_{k-1}^2.
			\]
			Therefore, we get, for $1 < j \leqslant k-1$, that
	\begin{align}\label{I1j 1}
	    - 2 I_{1j} \xi_1 \xi_j
			\geqslant -2 \frac{C \tl_k^2}{\tl_j^2} |\xi_1 \xi_j|
			\geqslant - \varepsilon \xi_1^2 - \frac{C\tl_k^4 \xi_j^2}{\varepsilon  \tl_j^4}\geqslant - \varepsilon \xi_1^2 - \frac{C\tl_k \xi_j^2}{\varepsilon  \tl_j},
	\end{align}
and, for $k \leqslant j \leqslant n$, that
\begin{align}\label{I1j 2}
			- 2 I_{1j} \xi_1 \xi_j
			\geqslant -2 C |\xi_1 \xi_j|
			\geqslant - \varepsilon \xi_1^2 - \frac{C \xi_j^2}{\varepsilon },
\end{align}
			where we have used the Cauchy-Schwarz inequality. Here $\varepsilon > 0$ is a small constant to be decided later and $C$ depends only on $n$ and $k$.

			If $\s_{k;j}>0$, then $\s_{k-1;1j}> 0$ and by Lemma \ref{sigmak} (1), we see that 
			\begin{equation}
				\label{sk-1}
				\s_{k-1;j} = \s_{k-2;1j} + \s_{k-1;1j} > 0\quad \mbox{for} \; 
				\tl \in \Gamma_k.  
			\end{equation}
			By Maclaurin's inequality, we have
			\[
			(\s_{k-1;1j})^{\frac{k-2}{k-1}} \leqslant C(n,k)\s_{k-2;1j}.
			\]
			On the other hand, we have
			\[
			\s_{k-1;1j} \leqslant C \tl_2\cdots \tl_{k-1} \tl_k \leqslant C \delta'.
			\]
            where we use $0<\tl_i\leqslant 1$ for any $2\leqslant i\leqslant k$ and \eqref{tl k} in the last inequality. Therefore, we obtain
			\begin{equation}
				\label{ineq-2}
				0 < \s_{k-1;1j} \leqslant C\delta'^{\frac{1}{k-1}} \s_{k-2;1j},
			\end{equation}
			from which we derive that
			\[
			\s_{k-1;1} = \tl_j\s_{k-2;1j} + \s_{k-1;1j}
			\leqslant \Big( \tl_j + C\delta'^{\frac{1}{k-1}} \Big) \s_{k-2;1j}.
			\]
			When $\tl_j\geqslant \de'^{\frac{1}{k-1}}$, combining \eqref{sk-1},
			we obtain that
			\[
			\s_{k-1;1} \leqslant C\tl_j\s_{k-1;j}.
			\]
		When $\tl_j< \de'^{\frac{1}{k-1}} $,
			we have
			\[
			\s_{k-1;1} \leqslant  C\delta'^{\frac{1}{k-1}}  \s_{k-1;j}.
			\]
            
			Combining Lemma \ref{sigmak} (4), we obtain that,
			if
			$\tl_j\geqslant \de'^{\frac{1}{k-1}}$, then $1\leq j< k$, and 
			\begin{equation}
				\label{I-j-1}
				\frac{\s_{k-1;j}}{\s_k}\geqslant \frac{C}{\tl_j};
			\end{equation}
	if $\tl_j<\de'^{\frac{1}{k-1}}$, then
			\begin{equation}
				\label{I-j-2}
				\frac{\s_{k-1;j}}{\s_k}\geqslant \frac{C}{\de'^{\frac{1}{k-1}}}.
			\end{equation}
			If $\s_{k;j}\leqslant 0$, then by Lemma \ref{sigmak} (1), we have $\s_{k-1;j}\tl_j\geqslant \s_k,$
			that is, 
			\begin{equation}
				\label{I-j-3}
				\frac{\s_{k-1;j}}{\s_k} \geqslant \frac{1}{\tl_j}.
			\end{equation}
			
			Consequently, inserting \eqref{I-j-1}, \eqref{I-j-2} and \eqref{I-j-3} into \eqref{I1j 1} and \eqref{I1j 2}, we have
			\begin{align}\label{I1j 3}
			- 2 \sum_{j>1} I_{1j} \xi_1 \xi_j \geqslant - n \varepsilon \xi_1^2 - \frac{C\delta'^{\frac{1}{k-1}}}{\varepsilon} 
			\sum_{i > 1} \frac{\s_k^{ii} \xi_i^2}{ \s_k}
			\end{align}
  where we also use \eqref{tl k} and the fact, $\tl_k\leq \tl_j$ for any $1< j\leqslant k-1$.
			Combining \eqref{main-1}, \eqref{s-k-2-1} and \eqref{I1j 3},
			we arrive at the conclusion that 
			\begin{equation}
				\label{main-2}
				\begin{aligned}
					& - \sum_{p\neq q} \frac{\s_k^{pp,qq} \xi_p \xi_q}{\s_k}
					+ 2 \frac{(\sum_i \s_k^{ii} \xi_i)^2}{\s_k^2} 
					+ 2 \sum_{i > 1} \frac{\s_k^{ii} \xi_i^2}{(1 + 2\delta ) \s_k} \\
					\geqslant &\; - \frac{\partial_\xi^2 q_k}{q_k} 
					+ \Big( 1 + \frac{1}{2k} \Big) \Big(\frac{\s_{k-2;1} \xi_1}{\s_{k-1}}\Big)^2 
					- n \varepsilon \xi_1^2
					+ \Big(\frac{2}{1+2\delta} - \frac{C\delta'^{\frac{1}{k-1}}}{\varepsilon} \Big)
					\sum_{i > 1} \frac{\s_k^{ii} \xi_i^2}{\s_k}\\
					\geqslant &\; - \frac{\partial_\xi^2 q_k}{q_k} + \Big( 1 + \frac{1}{2k} 
					- 2n \varepsilon \Big) \Big(1-C\delta'\Big)^2 \xi_1^2
					+ \Big( 2 - 4\delta - \frac{C\delta'^{\frac{1}{k-1}}}{\varepsilon} \Big)
					\sum_{i > 1} \frac{\s_k^{ii} \xi_i^2}{\s_k}.
				\end{aligned}
			\end{equation}
			We denote by $Q$ the following quadratic form:
			\[
			Q:=  - \sum_{p\neq q} \frac{\s_k^{pp,qq} \xi_p \xi_q}{\s_k}
			+ 2 \frac{(\sum_i \s_k^{ii} \xi_i)^2}{\s_k^2} 
			+ 2 \sum_{i > 1} \frac{\s_k^{ii} \xi_i^2}{(1 + 2\delta ) \s_k}.
			\]
			We further derive from \eqref{main-2} that, for $\varepsilon = \frac{1}{8(n+1)k}$, there exists a constant $C$ depending only on $n$ and $k$, such that
			\begin{equation}
				\label{main-3}
				\begin{aligned}
					Q 
					\geqslant &- \frac{\partial_\xi^2 q_k}{q_k} + \Big( 1 + \frac{1}{4k} \Big) \Big(1-C\delta'\Big)^2 \xi_1^2
					+ \Big( 2 - C\delta'^{\frac{1}{k-1}} \Big)
					\sum_{i > 1} \frac{\s_k^{ii} \xi_i^2}{ \s_k}.
				\end{aligned}
			\end{equation}
			For sufficiently small $\delta$ depending only on $n,\ k$,
			we have 
			\begin{equation}
				\label{main-4}
				\begin{aligned}
					Q \geqslant - \frac{\partial_\xi^2 q_k}{q_k} +\Big( 1 + \frac{1}{8k} \Big) \xi_1^2
					+ \Big( 2 - C\delta'^{\frac{1}{k-1}} \Big)
					\sum_{i > 1} \frac{\s_k^{ii} \xi_i^2}{\s_k}.
				\end{aligned}
			\end{equation}
			We now assume that
			\[
			\s_{k;1}> - \frac{\s_k}{16k}.
			\]
			Then, from $\s_k = \tl_1 \s_{k-1;1} + \s_{k;1}$, it follows that 
			\[
			1 + \frac{1}{16k}>\frac{\s_{k-1;1}}{\s_k}.
			\]
			Therefore, for sufficiently small $\delta$ depending only on $n,\ k$, we obtain that
			\begin{equation}
				\label{main-5}
				\begin{aligned}
					Q \geqslant \Big( 1 + \frac{1}{1+ 16k} \Big) \frac{\s_{k}^{11} \xi_1^2}{ \s_k},
				\end{aligned}
			\end{equation}
			which is exactly the inequality stated in \eqref{positive} with $\gamma = 1/(1+ 16k)$.
			
			Next, we consider the case where
			\begin{equation}
				\label{sit-2}
				\s_{k;1} \leqslant - \frac{\s_k}{16k}.
			\end{equation}
			To proceed, we first establish several preliminary claims.
			
			\textit{\textbf{Claim 1}:} We have
			\begin{equation}
				\label{z-2'}
				\tl_k \leqslant C |\tl_{n}|\leqslant C\de
			\end{equation}
			for some constant $C$ depending on $n$ and $k$.
			Recall Lemma \ref{xk} and \eqref{c}. Since
			\begin{equation}
				\label{z-3}
				\frac{1}{16k} \s_k (\tl) < - \s_{k;1} (\tl)
				\leqslant C \tl_2 \cdots \tl_{k-1} \tl_k^2,
			\end{equation}
			we have 
			\[\tl_1\tl_2 \cdots \tl_k\leqslant C \tl_2 \cdots \tl_k^2 + C\tl_1 \cdots \tl_{k-1} |\tl_n|,\]
        Then due to \eqref{tl k}, we obtain 
        \begin{align*}
            \tl_k\leqslant C|\tl_n|\leqslant C\de,
        \end{align*}
			which implies Claim 1.
			
			\textit{\textbf{Claim 2}:} There exist two positive constants $c_1$ and $c_2$ only depending on $n$ and $k$, such that
			\begin{equation}
				\label{claim-2}
				-\s_{k+1} (\tl) \geqslant c_1 \tl_1 \cdots \tl_{k-1} \tl_k^2 \geqslant c_2\s_k (\tl).
			\end{equation}
			Note that
			\eqref{sit-2} implies that
			$\tl_{n} < 0$.
			Since
			$\s_{k;l} = \s_k - \tl_l \s_{k-1;l} > 0$ if $\tl_l < 0$,
			we know $\tl | l \in \Gamma_k$.
			Thus, by $\tl|n \in \Gamma_k$, we have the following:
			\begin{equation}
				\label{s_k-1n}
				\tl_1 \cdots \tl_{k-1} \tl_{n}^2 \leqslant \s_{k-1;n} \tl_{n}^2.
			\end{equation}
			By Lemma \ref{sigmak} (3), we obtain that
			\begin{equation}
				\label{z-1}
				\begin{aligned}
					\tl_1 \cdots \tl_{k-1} \tl_{n}^2<\sum_{1 \leqslant j \leqslant {n}} \s_{k-1;j} \tl_j^2 
					= \s_1 \s_k
					- (k + 1) \s_{k+1}.
				\end{aligned}
			\end{equation}
			Furthermore, by \eqref{sit-2}
			\begin{equation}
				\frac{1}{16k} \s_k<-\s_{k;1}=\s_{k+1;1}-\s_{k+1}.
			\end{equation}
			Then we have 
			\[
			\tl_1 \cdots \tl_{k-1} \tl_{n}^2\leqslant 
			C\left(\s_{k+1;1}- \s_{k+1}\right).
			\]
			We find that
			\[
			C |\s_{k+1;1}| \leqslant C\tl_2\cdots \tl_{k-1} \tl_k^3 \leqslant C\de \tl_1 \cdots \tl_{k-1} \tl_k^2 \leqslant \frac{1}{4} \tl_1 \cdots \tl_{k-1} \tl_{n}^2
			\]
			for sufficiently small $\delta$ depending only on $n,\ k$, where we use Claim 1.
			By combining the above two inequalities and applying Claim 1, we obtain the first inequality in Claim 2.
			By \eqref{z-3}, we obtain the second inequality in Claim 2.

			\textit{\textbf{Claim 3}:} There exist positive constants $c_3$, $c_4$, $c_5$ and $c_6$ only depending on $n$ and $k$, such that
			\begin{equation}
				\label{claim-3}
				c_5\s_k (\tl) \leqslant c_3 \tl_2 \cdots \tl_k^2 \leqslant \s_{k-1;1} \leqslant c_4 \tl_2 \cdots \tl_k^2\leqslant c_6\de^2\s_{k-1} (\tl).
			\end{equation}
			Note that
			\[
			- \s_{k+1} - C \tl_2 \cdots \tl_k^3 \leqslant - \tl_1 \s_{k;1} = - \s_{k+1} + \s_{k+1;1}  
			\leqslant - \s_{k+1} + C \tl_2 \cdots \tl_k^3.
			\]
			By Claim 1 and Claim 2, we see that
			\begin{equation}
				\label{sk+1}
				- \s_{k;1} \tl_1  
				\leqslant - \s_{k+1} + C \delta \tl_1 \tl_2 \cdots \tl_k^2
				\leqslant - (1 + C \delta) \s_{k+1} .
			\end{equation}
			Now by \eqref{sit-2} and the above inequality \eqref{sk+1}, it follows that
			\[
			\tl_1 \s_{k-1;1} = \s_k - \s_{k;1} \leqslant - 16k \s_{k;1} - \s_{k;1} \leqslant \frac{-C\s_{k+1}}{\tl_1} 
			\leqslant c_4 \tl_2 \cdots \tl_k^2.
			\]
			On the other hand, by \eqref{tl k} and Claim 2, we derive that
			\[
			\tl_1 \s_{k-1;1} > - \s_{k;1}
			\geqslant \frac{- \s_{k+1} - C \tl_2 \cdots \tl_k^3}{\tl_1}
			\geqslant c_3 \tl_2 \cdots \tl_k^2.
			\]
			
			We now continue with the proof.
			Let $e_1 = (1, 0, \cdots, 0)$. Then, we can find $\zeta = (0, \zeta_2, \cdots, \zeta_{n})$ and $a \in \mathbb{R}$
			such that
			\[
			\xi =(1+a)\xi_1 \tl- a \xi_1 e_1 + \zeta
			\]
			and
			\begin{equation}
				\label{perp}
				(\zeta_k, \cdots, \zeta_{n}) \perp (\tl_k, \cdots, \tl_{n}).
			\end{equation}
			Set
			\[
			II_{1p}' = \frac{\s_{k-2;1p}}{\s_k} - 
			\frac{\s_{k-1;p}}{\s_k} \frac{\s_{k-2;1}}{\s_{k-1}} 
			- \frac{\s_{k-1;1}}{\s_k} \frac{\s_{k-2;p}}{\s_{k-1}}
			+ \frac{\s_{k-2;1}}{\s_{k-1}} \frac{\s_{k-2;p}}{\s_{k-1}}
			\]
			and
			\[
			II_{1p}'' = \frac{\s_{k-2;1}}{\s_{k-1}} \frac{\s_{k-2;p}}{\s_{k-1}}
			- \frac{\s_{k-3;1p}}{\s_{k-1}}.
			\]
			By Lemma \ref{qk,con}, we have 
			\begin{equation}
				\label{q''}
				\begin{aligned}
					-\frac{\partial_\xi^2 q_k}{q_k} 
					= &\; - (-a\xi_1 e_1 + \zeta)_p \frac{q_k^{pp,qq}}{q_k} (- a\xi_1 e_1 + \zeta)_q\\
					= &\; - 2a^2 \xi_1^2 \frac{\s_{k-2;1}}{\s_{k-1}} 
					\Big(\frac{\s_{k-2;1}}{\s_{k-1}} - \frac{\s_{k-1;1}}{\s_k}\Big) 
					- \frac{\partial_\zeta^2 q_k}{q_k} + 2a \sum_{p>1} \xi_1 \zeta_p II_{1p}.
				\end{aligned}
			\end{equation}
			where
			\[\begin{aligned}
				II_{1p} 
				= &\; II_{1p}' + II_{1p}''.
			\end{aligned}\]
			
			Then we estimate $II_{1p}''$. A direct computation gives
			\[
			II_{1p}''
			= \frac{\s_{k-2;1p}^2-\s_{k-1;1p} \s_{k-3;1p}  }{\s_{k-1}^2}.
			\]
			Hence, by Claim 3, we have
			\begin{equation}
				\label{II-1}
				\Big|II_{1i}'' \Big| \leqslant 
				\frac{C \tl_k^2}{\tl_i^2}\leqslant C\frac{\s_{k-1;1}\tl_k^2}{\s_k\tl_i^2}
			\end{equation}
			for $2\leqslant i\leqslant k-1$.
			Note that, for $2\leqslant i\leqslant k-1$, \eqref{II'} reduces to
    \begin{equation}\label{II'}
            \begin{aligned}
				II_{1i}'=&
\frac{1}{\tl_1\tl_i}\left( \frac{\s_{k;1i}}{\s_k} -
			\frac{\s_{k;i}}{\s_k} \frac{\s_{k-1;1}}{\s_{k-1}} 
			- \frac{\s_{k;1}}{\s_k} \frac{\s_{k-1;i}}{\s_{k-1}}
			+ \frac{\s_{k-1;1}}{\s_{k-1}} \frac{\s_{k-1;i}}{\s_{k-1}}\right)\\
                = &\frac{\s_{k;1i}}{\tl_i \s_k} - \frac{\s_{k-1;i} \s_{k;1}}{ \tl_i \s_{k-1} \s_k} + \frac{\s_{k-1;1}}{\s_k}\Big(\frac{\s_{k-1;i}}{\s_{k-1} } - \frac{\s_k\s_{k-2;i}}{\s_{k-1}^2}\Big).
			\end{aligned}
            \end{equation}
       See also (3.52) of \cite{Zhang} for reference.
			A direct calculation shows that
			\[
			\frac{\s_{k;1i}}{ \tl_i \s_k} - \frac{\s_{k-1;i} \s_{k;1}}{ \tl_i \s_{k-1} \s_k} = \frac{1}{ \s_k} \frac{\s_{k;1i} \s_{k-2;i} - \s_{k-1;i} \s_{k-1;1i}}{\s_{k-1}}
			\]
			and
			\[
			\s_{k;1i} \s_{k-2;i} - \s_{k-1;i} \s_{k-1;1i}
			\leqslant C \Big( \frac{\tl_1 \cdots \tl_k^3}{\tl_1 \tl_i} \frac{\tl_1 \cdots \tl_{k-1}}{\tl_i}  + \frac{\tl_1 \cdots \tl_k}{\tl_i} \frac{\tl_1 \cdots \tl_{k}^2}{\tl_1 \tl_i} \Big).
			\]
			It follows that
			\begin{equation}
				\label{II-2}
				\begin{aligned}
					\Big| \frac{\s_{k;1i}}{ \tl_i \s_k} - \frac{\s_{k-1;i} \s_{k;1}}{ \tl_i \s_{k-1} \s_k} \Big|
					\leqslant C \frac{ \tl_1\cdots \tl_k^3}{\s_k\tl_i^2}
					\leqslant C\frac{\s_{k-1;1} \tl_k}{\s_k\tl_i^2},
				\end{aligned}
			\end{equation}
			where we used Claim 3 in the last inequality.
			Also, by Claim 3, we have
			\begin{equation}
				\label{II-3}
				\frac{\s_{k-1;1}}{\s_k}\Big| \frac{\s_{k-1;i}}{\s_{k-1}} - \frac{ \s_k\s_{k-2;i}}{\s_{k-1}^2} \Big|
				\leqslant C\frac{\s_{k-1;1}}{\s_k}\left(\frac{\tl_1 \cdots \tl_k}{\tl_i \s_{k-1} } + 
				\frac{(\tl_1 \cdots \tl_{k-1}\tl_k)^2}{\tl_i \s_{k-1}^2}\right)
				\leqslant C\frac{\s_{k-1;1}\tl_k}{\s_k\tl_i}.
			\end{equation}
			
			Combining \eqref{II-1}, \eqref{II-2} and \eqref{II-3}, we therefore obtain that, for $2\leqslant i \leqslant k-1$,
			\begin{equation}
				\label{2a}
				\begin{aligned}
					2a\xi_1 \zeta_i II_{1i}
					\geqslant &\; - 2C |a \xi_1 \zeta_i|\frac{ \s_{k-1;1} }{\s_k}
					\frac{\tl_k}{\tl_i^2} \\
					\geqslant &\; - \varepsilon \frac{\s_{k-1;1} }{\s_k\tl_i^3} \zeta_i^2 
					- \frac{C a^2 \tl_k^2}{\varepsilon} \frac{\s_{k-1;1} \xi_1^2}{\s_k\tl_i }. 
				\end{aligned}
			\end{equation}
			
			Now we estimate $II_{1p}$ with $k \leqslant p \leqslant n$.  Note that, \eqref{II'} reduces to 
			\[
			\begin{aligned}
				II_{1p}'
                =&\;-\frac{\s_{k-1;1p}}{\tl_1\s_k} +
			\frac{\s_{k-1;p}}{\s_k} \frac{\s_{k-1;1}}{\tl_1\s_{k-1}} 
			- \frac{\s_{k-1;1}}{\s_k} \frac{\s_{k-2;p}}{\s_{k-1}}
			+ \frac{\s_{k-2;1}}{\s_{k-1}} \frac{\s_{k-2;p}}{\s_{k-1}}\\
                  = &\; \frac{\s_{k;1p} - \s_k}{\tl_1^2\s_k} + \frac{\s_{k-1} \tl_p - \s_{k-2;p} \tl_p^2}{\tl_1^2\s_k}\\
				&\; + \frac{\s_{k-1;p}}{\s_k} \frac{\s_{k-1;1}}{ \s_{k-1}} + \frac{\s_{k-2;p}}{\s_{k-1}} \Big(\frac{\s_{k-2;1}}{\s_{k-1}} - \frac{\s_{k-1;1}}{\s_k}\Big).
			\end{aligned}
			\]
          See also (3.58) of \cite{Zhang} for reference.
			By Claim 3 and Lemma \ref{sigmak} (3), we note that
			\[
			\Big| \frac{\s_{k;1p} - \s_k}{\s_k} \Big| \leqslant C \frac{\tl_2 \cdots \tl_k^2}{\s_k}+ 1\leqslant C \frac{\s_{k-1;1}}{\s_k}.
			\]
			A direct computation shows that
			\[
			\frac{\s_{k-2;p} \tl_p^2}{ \s_k} 
			\leqslant 
			\frac{C \tl_1 \cdots \tl_{k-2} \tl_p^2 }{ \s_k} \leqslant C\frac{\s_{k-1;1}}{\s_k\tl_{k-1}}
			\]
			by Claim 1 and Claim 3.
			We further compute that
			\[
			\frac{\s_{k-1;p}}{\s_k} \frac{\s_{k-1;1}}{\s_{k-1}} 
			\leqslant C\frac{ \tl_1 \cdots \tl_{k-1} \s_{k-1;1}}{\s_{k-1}\s_k} 
			\leqslant C \frac{\s_{k-1;1}}{\s_k}
			\]
			and
			\[
			\Big| \frac{\s_{k-2;p}}{\s_{k-1}} \Big(\frac{\s_{k-2;1}}{\s_{k-1}} - \frac{\s_{k-1;1}}{\s_k} \Big) \Big|
			\leqslant \frac{C}{\tl_{k-1}} 
            \Big(1 + \frac{\s_{k-1;1}}{\s_k}\Big) 
			\leqslant \frac{C \s_{k-1;1}}{\tl_{k-1} \s_k}.
			\]
			Note that for $p\geqslant k$,
			\[
			\Big|II_{1p}'' \Big| \leqslant 
			C\leqslant C\frac{\s_{k-1;1}}{\s_k}.
			\]
			By \eqref{perp} and the above five inequalities, we obtain, 
			for $k \leqslant p \leqslant n$, that
			\begin{equation}
				\label{2a'}
				\begin{aligned}
					2a \xi_1 \zeta_p II_{1p}
					\geqslant &\; - 2C |a \xi_1 \zeta_p| \frac{\s_{k-1;1}}{\s_k} \frac{1}{\tl_{k-1}} \\
					\geqslant &\; - \varepsilon \frac{\s_{k-1;1}  }{\s_k\tl_{k-1} \tl_k^2} \zeta_p^2
					- \frac{C a^2  \tl_k^2}{\varepsilon} \frac{\s_{k-1;1} \xi_1^2}{\s_k\tl_{k-1} }.
				\end{aligned}
			\end{equation}
			
			For the first term of the last equality in \eqref{q''}, it follows from computation that
			\[
			- 2a^2 \xi_1^2 \frac{\s_{k-2;1}}{\s_{k-1}} 
			\Big(\frac{\s_{k-2;1}}{\s_{k-1}} - \frac{\s_{k-1;1}}{\s_k}\Big)
			= 2a^2 \xi_1^2 \frac{\s_{k-1} - \s_{k-1;1}}{ \s_{k-1}} 
			\Big(\frac{- \s_{k;1}}{\s_k} + \frac{ \s_{k-1;1}}{\s_{k-1}}\Big).
			\]
			Hence, by Claim 3 and \eqref{tl k}, we have
			\[\begin{aligned}
				- 2a^2 \xi_1^2 \frac{\s_{k-2;1}}{\s_{k-1}} 
				\Big(\frac{\s_{k-2;1}}{\s_{k-1}} - \frac{\s_{k-1;1}}{\s_k}\Big) 
				\geqslant - 2 a^2 \xi_1^2\Big(1 - C\delta \Big)
				\frac{ \s_{k;1}}{ \s_k}.
			\end{aligned}\]
			Substituting \eqref{2a}, \eqref{2a'} and the preceding inequality into \eqref{q''}, we obtain:
			\begin{equation}
				\label{q''-1}
				\begin{aligned}
					-\frac{\partial_\xi^2 q_k}{q_k} 
					\geqslant &\; - \frac{\partial_\zeta^2 q_k}{q_k} - 2 a^2 \xi_1^2\Big(1 - C\delta \Big)
					\frac{ \s_{k;1}}{\s_k} - \varepsilon \sum_{k \leqslant p \leqslant n} \frac{\s_{k-1;1}  }{\s_k\tl_{k-1} \tl_k^2} \zeta_p^2\\
					&\; - \frac{C a^2 \tl_k^2 }{\varepsilon} \frac{\s_{k-1;1} \xi_1^2 }{\s_k \tl_{k-1}} - \varepsilon \sum_{2 \leqslant i < k} \frac{\s_{k-1;1} }{\s_k\tl_i^3} \zeta_i^2 
					- \frac{C a^2 \tl_k}{\varepsilon} \frac{\s_{k-1;1} \xi_1^2}{ \s_k}.
				\end{aligned}
			\end{equation}
			By Lemma \ref{decom} (see Lemma 2.8 in \cite{Zhang} for the proof) and Claim 3, there exists a positive constant $b$ depending on $n$ and $k$ such that 
			\[\begin{aligned}
				-\frac{\partial_\zeta^2 q_k}{q_k} 
				\geqslant &\; b \frac{\tl_1 \cdots \tl_{k-2}}{\s_k} \Big( \sum_{2 \leqslant i < k}  \frac{\tl_{k-1} \tl_k^2}{\tl_i^3} \zeta_i^2 + \sum_{k \leqslant p \leqslant n} \zeta_p^2 \Big)\\
				\geqslant &\; b \sum_{2 \leqslant i < k} \frac{\s_{k-1;1}}{\s_k\tl_i^3} \zeta_i^2 
				+ b \sum_{k \leqslant p \leqslant n} \frac{\s_{k-1;1}}{\s_k\tl_{k-1} \tl_k^2}\zeta_p^2,
			\end{aligned}\] 
			where we used Claim 3 in the last inequality.
			Let $ \varepsilon = b/2$. Then, it follows from \eqref{q''-1} that
			\begin{equation}
				\label{q''-2}
				\begin{aligned}
					-\frac{\partial_\xi^2 q_k}{q_k} 
					\geqslant &\; \frac{b}{2} \sum_{2 \leqslant i < k} \frac{\s_{k-1;1} }{\s_k\tl_i^3} \zeta_i^2 + \frac{b}{2} \sum_{k \leqslant p \leqslant n} \frac{\s_{k-1;1} }{\s_k\tl_{k-1} \tl_k^2}\zeta_p^2 \\
					&\; -2 a^2 \xi_1^2 \Big(1 - C\delta \Big)
					\frac{\s_{k;1}}{ \s_k} - \frac{C a^2 \de}{b} \frac{\s_{k-1;1} \xi_1^2}{\s_k},
				\end{aligned}
			\end{equation}
			where $b > 0$ only depends on $n$ and $k$.
			By \eqref{main-4} and the above inequality,
			we arrive at 
			\begin{equation}
				\label{main-6}
				\begin{aligned}
					Q \geqslant &\;\frac{b}{2} \sum_{2 \leqslant p < k} \frac{\s_{k-1;1} }{\s_k\tl_p^3} \zeta_p^2 + \frac{b}{2} \sum_{k \leqslant p \leqslant n} \frac{\s_{k-1;1} }{\s_k\tl_{k-1} \tl_k^2}\zeta_p^2 - 2 a^2 \Big(1 - C\delta \Big) \frac{ \s_{k;1}}{ \s_k}\xi_1^2\\
					&\; - \frac{C a^2 \de}{b} \frac{\s_{k-1;1} }{\s_k}\xi_1^2
					+ \Big( 1 + \frac{1}{8k} \Big)  \xi_1^2 + \Big( 2 - C\delta^{\frac{1}{k-1}} \Big)
					\sum_{i > 1} \frac{\s_k^{ii} \xi_i^2}{ \s_k}.
				\end{aligned}
			\end{equation}

			We now address the last term in the above inequality. 
			
			\textbf{Notation}: there exists a $1\leqslant\ell\leqslant k-1 $, such that $\tl_\ell \geqslant\sqrt{\de}$ and $\tl_{\ell+1} < \sqrt{\de} $. 
		Then
			\begin{equation}
				\label{i>1}
				\begin{aligned}
					\sum_{i > \ell} \frac{\s_k^{ii} \xi_i^2}{ \s_k}
					= &\; \sum_{i > \ell} \frac{\s_k^{ii} }{ \s_k} \Big((1+a) \xi_1 \tl_i + \zeta_i \Big)^2\\
					\geqslant &\; (1+a)^2 \xi_1^2 \sum_{i > \ell} \frac{\s_k^{ii} \tl_i^2}{\s_k} - 2\Big|(1+a) \xi_1 \sum_{i > \ell} \frac{\s_k^{ii} \tl_i \zeta_i}{ \s_k}\Big|\\
					\geqslant &\; (1+a)^2 \xi_1^2 \sum_{i > \ell} \frac{\s_k^{ii} \tl_i^2}{ \s_k} - 2\Big|(1+a) \xi_1 \sum_{k \leqslant p\leqslant n} \frac{\s_{k-2;p} \tl_p^2 }{ \s_k} \zeta_p \Big|\\
					&\; - 2\Big|(1+a) \xi_1 \sum_{\ell < i\leqslant k-1} \frac{\s_{k-1;i} \tl_i}{ \s_k} \zeta_i \Big|,
				\end{aligned}
			\end{equation}
			where in the second inequality we use \eqref{perp} and obtain
			\[
			\sum_{k\leqslant p \leqslant n} \s_{k-1;p} \tl_p \zeta_p =
			\sum_{k\leqslant p \leqslant n} (\s_{k-1} \tl_p \zeta_p - \s_{k-2;p} \tl_p^2 \zeta_p)
			= - \sum_{k\leqslant p \leqslant n} \s_{k-2;p} \tl_p^2 \zeta_p.
			\]
			By Lemma \ref{sigmak} (3), we obtain that
			\begin{equation}
				\label{z-1'}
				\begin{aligned}
					\sum_{\ell + 1 \leqslant j \leqslant n} \s_{k-1;j} \tl_j^2 
					= \sum_{\ell + 1 \leqslant i \leqslant n} \tl_i \s_k - \sum_{1 \leqslant i \leqslant \ell} \s_{k+1;i}-(k+1-\ell)\s_{k+1}.
				\end{aligned}
			\end{equation}
			We find that
			\begin{equation}
				\label{s_k+1}
				\sum_{1 \leqslant i \leqslant \ell} |\s_{k+1;i}| \leqslant \frac{C}{\tl_{\ell}} \tl_1 \cdots \tl_{k-1} \tl_k^3 \leqslant C \sqrt{\de}\tl_1 \cdots \tl_{k-1} \tl_k^2.
			\end{equation}
			Observe that, inserting Claim 3 and \eqref{sk+1} into \eqref{z-1'}, we obtain
			\begin{equation}
				\label{i>ell}
				\begin{aligned}
					\sum_{i > \ell} \frac{\s_k^{ii} \tl_i^2}{ \s_k} 
					\geqslant &\; - \frac{ (k+1-\ell) \s_{k+1} }{\s_k} 
					- C \sqrt{\de}\frac{\tl_1 \cdots \tl_k^2}{ \s_k}\\
					\geqslant &\; - \frac{k+1-\ell}{1 + C \delta} \frac{ \s_{k;1} }{\s_k}
					- C\sqrt{\de} \frac{\s_{k-1;1}}{ \s_k}.
				\end{aligned}
			\end{equation}
			And for $k \leqslant p \leqslant n$, by Claim 3 we see that
			\[
			\s_{k-2;p} \tl_p^2 \leqslant C \tl_1 \cdots \tl_{k-2} \tl_k^2 \leqslant C\frac{\s_{k-1; 1} }{\tl_{k-1}}.
			\]
			By the above inequality and the Cauchy–Schwarz inequality, we obtain the following estimate: 
			\begin{equation}
				\label{p>=k}
				\begin{aligned}
					2\Big|(1+a) \xi_1 \frac{\s_{k-2;p} \tl_p^2}{ \s_k} \zeta_p \Big|
					\leqslant &\; \frac{b}{4} \frac{\s_{k-1;1} }{\s_k\tl_{k-1} \tl_k^2} \zeta_p^2 + \frac{C(1+a)^2}{b} \frac{\s_{k-1;1} \tl_k^2}{\s_k\tl_{k-1} } \xi_1^2\\
					\leqslant &\; \frac{b}{4} \frac{\s_{k-1;1} }{\s_k\tl_{k-1} \tl_k^2} \zeta_p^2 + \frac{C\de (1+a)^2}{b}\frac{ \s_{k-1;1} }{\s_k} \xi_1^2,
				\end{aligned}
			\end{equation}
			where Claim 3 is used in the last inequality.
			For $\ell < i \leqslant k-1$, by \eqref{sit-2}, we find that
			\[
			\s_{k} \leqslant -16 k \s_{k;1} \leqslant C \tl_2 \cdots \tl_k^2  \leqslant \frac{C \s_{k-1;1} }{\tl_i}
			\]
			and, also
			\[
			- \s_{k;i} \leqslant \frac{C \tl_1 \cdots \tl_k^2}{\tl_i} \leqslant \frac{C \s_{k-1;1} }{\tl_i}.
			\]
			By the above two inequalities and $\s_{k-1;i} \tl_i = \s_k - \s_{k;i}$, we obtain
			\begin{equation}
				\label{p<k-1}
				\begin{aligned}
					2\Big|(1+a) \xi_1 \frac{\s_{k-1;i} \tl_i}{ \s_k} \zeta_i \Big|
					\leqslant &\; \frac{b}{4} \frac{\s_{k-1;1}  \zeta_i^2}{\s_k\tl_i^3}
					+ \frac{C(1+a)^2}{b}\frac{\s_{k-1;1} \tl_i}{\s_k} \xi_1^2\\
					\leqslant &\; \frac{b}{4} \frac{\s_{k-1;1} \tl_1^2 \zeta_i^2}{\s_k\tl_i^3}
					+ \frac{C \sqrt{\de} (1+a)^2 }{b}\frac{\s_{k-1;1}}{\s_k} \xi_1^2,	
				\end{aligned}
			\end{equation}
			where we used the Cauchy-Schwarz inequality and the \textbf{Notation}. 
			Substituting the inequalities \eqref{i>ell}, \eqref{p>=k} and \eqref{p<k-1} into \eqref{i>1}, we conclude that
		\begin{equation}\label{sk 1}
		\begin{aligned}
				\sum_{i > \ell} \frac{\s_k^{ii} \xi_i^2}{\s_k}
				\geqslant &\; - (1+a)^2 \Big(\frac{k+1 -\ell}{1 + C\delta} \frac{ \s_{k;1} }{ \s_k} \xi_1^2 
				+ C\sqrt{\de} \frac{\s_{k-1;1}}{\s_k} \xi_1^2 \Big)\\	
				&\; - \frac{b}{4} \sum_{k \leqslant p\leqslant n} \frac{\s_{k-1;1}}{\s_k\tl_{k-1} \tl_k^2} \zeta_p^2 -\frac{C \de (1+a)^2 }{b}\frac{\s_{k-1;1}}{\s_k}\xi_1^2\\
				&\; - \frac{b}{4} \sum_{\ell < i \leqslant k-1} \frac{\s_{k-1;1} }{\s_k\tl_i^3} \zeta_i^2
				- \frac{C \sqrt{\de} (1+a)^2 }{b} \frac{\s_{k-1;1}}{\s_k} \xi_1^2.
			\end{aligned}    
		\end{equation}
			Let 
            \[
            \tilde a = a^2 + 2(1+a)^2.
            \]
			Substituting \eqref{sk 1} into \eqref{main-6}, we obtain:
			\begin{equation}
				\label{main-7}
				\begin{aligned}
					Q \geqslant & - 2\tilde a  \Big(1 - C\delta^{\frac{1}{k-1}} \Big)
					\frac{\s_{k;1}}{\s_k}\xi_1^2 + \Big(1 + \frac{1}{8k}\Big) \xi_1^2 - \frac{C \tilde a \sqrt{\de}}{b} \frac{\s_{k-1;1}}{\s_k} \xi_1^2.
				\end{aligned}
			\end{equation}
			where in the inequality we used $k + 1 - \ell \geqslant 2$.
			By the proof of Claim 3, we observe that
			$ \s_{k-1;1} \leqslant - (16k + 1) \s_{k;1}$.
We have
\[\begin{aligned}
- \tilde a  \frac{ \s_{k;1}}{ \s_k} \xi_1^2
\geqslant \frac{\tilde a }{C} \frac{\s_{k-1;1}}{\s_k} \xi_1^2.
\end{aligned}\]
We obtain from \eqref{main-7} that
\begin{equation}
	\label{main-9}
		Q \geqslant 2\tilde a (1 - C \delta) \Big( \frac{\s_{k-1;1} }{ \s_k} - 1\Big) \xi_1^2 + \Big( 1 + \frac{1}{8k} \Big) \xi_1^2
\end{equation}
by choosing $\de$ small enough. Note that
$2\tilde a \geqslant 4/3 >1 + 1/8k$. 
            Then, since
			$\s_{k;1}<0$, by choosing $\delta \ll 1$ depending only on $n,\ k$,
			we obtain from \eqref{main-9} that
			\begin{equation}
				\label{main-10}
				\begin{aligned}
					Q 
					\geqslant &\; (1 - C \delta) \Big(1 + \frac{1}{8k}\Big) 
					\frac{\s_{k-1;1} \xi_1^2}{ \s_k} 
                    \geqslant (1+\gamma) \frac{\s_{k-1;1} \xi_1^2}{ \s_k}.
				\end{aligned}
			\end{equation}
		\end{proof}

Now we can prove the concavity inequality for the sum Hessian operator \eqref{sumH}.
Let
    \[
     F_i (\la)= \frac{\partial F (\la)}{\partial \la_i}\;
     \mbox{and}\;
     F_{ij} (\la) = \frac{\partial^2 F (\la)}{\partial \la_i \partial \la_j}.
    \]
We have
\begin{lemma}
\label{Fconcavity}
    Suppose $(\la,y) \in \Gamma_k^{n+m}$, $\la_1\geqslant\la_2\geqslant\cdots\geqslant \la_n \geqslant -\delta \lambda_1$ and $y_m\geq -\delta\lambda_1$. Then, for sufficiently small $\delta \in (0, 1)$, sufficiently large $K_0$ depending only on $n$, $k$ and $K_1=\delta^{-1}y_{\min}$  such that $\lambda_1>\max\lbrace K_0F^{\frac{1}{k}},-K_1\rbrace$, the following inequality holds at $\lambda^{\downarrow}$:
    \begin{align}\label{key F}
  -\sum_{\substack{p \neq q}} \frac{F_{pq} \xi_p \xi_q}{F} + 2\frac{\left(\sum_{i=1}^{n} F_{i} \xi_i\right)^2}{F^2} + 2\sum_{i>1} \frac{F_{i}\xi_i^2}{(1+2\delta)\lambda_1 F} \geqslant (1+\gamma)\frac{F_{1} \xi_1^2}{\lambda_1 F}   
    \end{align}
    for any vector $\xi = (\xi_1, \ldots, \xi_n) \in \mathbb{R}^{n}$ with $\gamma = 1/(1+16k)$.
\end{lemma}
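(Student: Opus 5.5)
The plan is to reduce Lemma \ref{Fconcavity} to Lemma \ref{key} applied in dimension $n+m$, with the vector $(\lambda,y)\in\Gamma_k^{n+m}$. By Lemma \ref{key-c}, $F(\lambda)=\s_k^{(n+m)}(\lambda,y)$ for every $\lambda$, with $y$ fixed. Differentiating only in the $\lambda$-variables gives, for $1\leqslant p,q\leqslant n$,
\[
F_p=\s_k^{pp}(\lambda,y), \qquad F_{pq}=\s_k^{pp,qq}(\lambda,y).
\]
Extend $\xi\in\mathbb{R}^n$ to $\bar\xi=(\xi,0)\in\mathbb{R}^{n+m}$. Then every sum in \eqref{key F} coincides with the corresponding sum in \eqref{positive} for $\s_k^{(n+m)}$ at $(\lambda,y)$ evaluated on $\bar\xi$: the extra $y$-components contribute nothing since $\bar\xi$ vanishes there. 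So \eqref{key F} is exactly \eqref{positive}, with $n$ replaced by $n+m$, at the point $(\lambda,y)$ and direction $\bar\xi$.

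The step is to check the hypotheses of Lemma \ref{key} for $(\lambda,y)$ after decreasing rearrangement. The key point is that $\lambda_1$ must remain the largest entry, i.e. $\lambda_1\geqslant y_i$ for all $i$. The conditions $\lambda_1>-K_1=-\delta^{-1}y_{\min}$ and the role of $y_{\max}$ need care, so I would add, if necessary, $\lambda_1\geqslant \max_i y_i$ by enlarging $K_0$ (the $y_i$ are bounded by $\|a\|$ via Vieta) or simply observe $\lambda_1\geqslant K_0F^{1/k}$ dominates. Under Condition \eqref{con1}, all $y_i\geqslant 0\geqslant-\delta\lambda_1$. In general, the assumption $y_m\geqslant-\delta\lambda_1$ (with $y_m$ the smallest root), together with $\lambda_n\geqslant-\delta\lambda_1$, gives that the smallest entry of $(\lambda,y)$ is $\geqslant-\delta\lambda_1$. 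The ratio condition $\lambda_1/(\s_k^{(n+m)})^{1/k}=\lambda_1/F^{1/k}>K_0$ holds by assumption. Here $\delta,K_0$ are those of Lemma \ref{key} for dimension $n+m$, so they depend on $n+m$ and $k$, i.e. also on $m$; I would note that.

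Applying Lemma \ref{key} then yields
\[
Q(\bar\xi)\geqslant (1+\gamma)\frac{\s_k^{11}(\lambda,y)\xi_1^2}{\lambda_1\s_k(\lambda,y)},
\]
which is \eqref{key F}. One subtlety is the index set of the third term: in Lemma \ref{key} the sum runs over all indices of $(\lambda,y)$ except the one of the largest entry. Since the terms with $y$-indices vanish ($\bar\xi$ is zero there), this equals $\sum_{i>1}$ over $\lambda$-indices. Another is the ordering: Lemma \ref{key} is stated for sorted vectors, but the quadratic forms are symmetric, so permuting coordinates (placing the $y_j$ among the $\lambda_i$) is harmless provided the top entry is $\lambda_1$.

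The main obstacle is therefore guaranteeing that $\lambda_1$ is the maximal entry of $(\lambda,y)$. I would handle it by using the largeness $\lambda_1>K_0F^{1/k}$ and, if necessary, imposing $\lambda_1\geqslant y_{\max}$, which is harmless in applications where $\lambda_1\to\infty$.
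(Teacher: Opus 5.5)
Your proposal is correct and is essentially the paper's proof: lift to $(\lambda,y)\in\Gamma_k^{n+m}$ via Lemma \ref{key-c}, extend $\xi$ by zeros in the $y$-slots, rearrange decreasingly, and apply Lemma \ref{key} in dimension $n+m$. As you note, this makes the constants depend on $m$ as well.

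The paper also tacitly assumes $\lambda_1\geqslant\max_i y_i$ when it writes $\bar\lambda=(\lambda_1,\hat\lambda_{\tau(2)},\ldots)$. Imposing this explicitly, as you do, is the right fix; it is harmless in applications, since there $\lambda_1\to\infty$ while the $y_i$ are fixed. Your other suggestion, that $\lambda_1>K_0F^{1/k}$ alone forces it, is not valid, because $F$ can be arbitrarily small while $\lambda_1<y_{\max}$.
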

    \begin{proof}  
Define
\[\begin{aligned}
	\hat{\la} = &\;(\la_1,\ldots,\la_n,y_1,\dots,y_m),\\
    \hat{\xi}= &\; (\xi_1,\ldots,\xi_n,0,\ldots,0).
\end{aligned}\]
Hence, by Lemma \ref{key-c},
    \[F(\lambda)=\sigma_k^{(n)} (\la)+\sum_{r=1}^m a_r \sigma_{k-r}^{(n)} (\la)= \sigma_k^{(n+m)} (\hat{\la}).\]
    Since $\la_i = \hat \la_i$ for $1\leqslant i \leqslant n$, we have
    \[
    F_i(\la) = \sigma_{k-1;i}^{(n)} (\la)+\sum_{r=1}^m a_r \sigma_{k-r-1;i}^{(n)} (\la)
    = \sigma_{k-1;i}^{(n+m)} (\hat{\la}).
    \]
    Similarly, we see that
    \[
    F_{ij} (\la) = \sigma_{k-2;ij}^{(n+m)} (\hat{\la})\; \mbox{for}\; 1 \leqslant i\neq j \leqslant n.
    \]
    Then we rearrange $\hat{\lambda}$ and $\hat{\xi}$ accordingly:
\begin{align*}
\bar{\lambda}=\hat{\lambda}^{\downarrow}
= &\; (\lambda_1,\hat \lambda_{\tau(2)},\ldots, \hat \lambda_{\tau(n+m)}),\\
\bar{\xi}= &\; (\xi_1, \hat \xi_{\tau(2)},\ldots, \hat \xi_{\tau(n+m)}),
\end{align*}
where $\tau$ is a permutation.
	Moreover, we have
	\[
	\sum_{i=1}^{n+m} \sigma_{k-1;i}^{(n+m)}(\bar{\la})\bar\xi_i= \sum_{i=1}^{n+m} \sigma_{k-1;i}^{(n+m)} (\hat{\la})\hat{\xi}_i =\sum_{i=1}^{n} F_i  (\la)\xi_i,
    \]
    \[
	\sum_{i>1}^{n+m} \sigma_{k-1;i}^{(n+m)}(\bar{\la})\bar\xi_i^2 = \sum_{i>1}^{n+m} \sigma_{k-1;i}^{(n+m)}(\hat{\la})\hat{\xi}_i^2 = \sum_{i>1}^{n} F_i (\la)\xi_i^2,
	\]
	and
	\begin{align*}
		\sum_{p\ne q}^{n+m}\sigma_{k-2;pq}^{(n+m)}(\bar{\la})\bar\xi_p\bar\xi_q
	= \sum_{p\ne q}^{n+m} \sigma_{k-2;pq}^{(n+m)}(\hat{\la})\hat{\xi}_p\hat{\xi}_q = \sum_{p\ne q}^{n}F_{pq}(\la)\xi_p\xi_q.    
	\end{align*}
By applying Lemma \ref{key} to $\s_k^{(n+m)}(\bar \la)$,	
we obtain inequality \eqref{key F}.
\end{proof}
For the same reason, it follows that $F^{\frac{1}{k}}$ is concave in the $\Gamma_k^{(n+m)}$ cone.

\section{Proof of Theorem \ref{thm1} for Case (B)}

\begin{proof}
We consider the following function for $x \in \Omega$, $\xi \in \mathbb{S}^{n-1}$
\begin{eqnarray*}
\widetilde{P}(x, \xi)= \alpha \log (-u)+\log \max \{u_{\xi\xi}(x), 1\}+\frac{L}{2}|Du|^2,
\end{eqnarray*}
where $L\gg 1$, $\alpha >0$ are constants to be determined later.
Clearly, $\widetilde{P}$ attains its maximum at some interior point $x_0 \in \Omega$ and for some $\xi(x_0) \in \mathbb{S}^{n-1}$.
Choose a normal coordinate frame $e_1,\ldots,e_n$ at $x_0$ such that $\xi(x_0)=e_1$ and $D^2 u(x_0)$ is diagonal.
Denote the eigenvalues of $D^2 u(x_0)$ by
$$\lambda_1(x_0)\geqslant\lambda_2(x_0)\geqslant \cdots \geqslant \lambda_n(x_0).$$ 
Let $\gamma(s)$ be any geodesic starting at $x_0$ satisfying $\gamma'(0) = e_i(x_0)$. We construct an
orthonormal frame $e_1(s), e_2(s), \dots, e_n(s)$ near $x_0$ extending $e_1(x_0), e_2(x_0), \dots , e_n(x_0)$
parallel to $\gamma(s)$. We construct a unit vector field $v(x)$ near $x_0$ such that $v(x_0)=e_1(x_0)$ and $$\nabla_iv(x_0) = \sum_{p\neq 1}\frac{u_{1pi}}{\la_1+1-\la_p}e_p(x_0).$$
We may also assume that $\lambda_1(x_0)> 1$ is sufficiently large.
Then we consider the function
\begin{eqnarray*}
P(x)=\alpha\log (-u)+\log\left( \nabla^2 u\left( v(x),v(x)\right) +  \langle v(x), e_1(x)\rangle^2 - 1 \right)+\frac{L}{2}|Du|^2.
\end{eqnarray*}
Note that $x_0$ is also a maximum point of $P$. At $x_0$, we have 
\begin{align}\label{cri}
    0=\nabla_iP(x_0)=\alpha\frac{u_i}{u}+\frac{u_{11i}}{u_{11}}+Lu_i\lambda_i
\end{align}
   and 
   \begin{equation}\label{hess 31}
   \begin{aligned}
    0\geqslant &  \nabla_{ii}P(x_0) \\
    =&\alpha\frac{u_{ii}}{u}-\alpha\frac{u_i^2}{u^2}+\frac{u_{11ii}}{u_{11}}+\sum_{l>1}\frac{2u_{1li}^2}{u_{11}(u_{11}-u_{ll}+1)}\\
    &-\frac{u_{11i}^2}{u_{11}^2} + L \sum_l u_{li}^2 + L \sum_l u_{lii}u_l.
   \end{aligned}    
   \end{equation}

Define by
\[
F^{ij} = \frac{\partial F}{\partial u_{ij}}
\;\mbox{and}\;
F^{ij,kl} = \frac{\partial^2 F}{\partial u_{ij} \partial u_{kl}}.
\]
Since at $x_0$, $\{u_{ij}(x_0)\}$ is diagonal, we have
\[
F^{ij} = \delta_{ij} F_i.
\]   
By $F(D^2 u)=\psi(x,u, D u)$, we have at $x_0$ 
\begin{equation}\label{gra}
   \nabla_l F=\psi_l+\psi_uu_l+\psi_{u_l} \la_{l}
   = \nabla_l \psi
\end{equation}
and 
\begin{equation}\label{hess s}
\nabla^2_{11} F=F^{ij}u_{ij11} + F^{pq,rs}u_{pq1}u_{rs1} = \nabla^2_{11} \psi.
\end{equation}
By a direct calculation, there exists some constant $C>0$ depending on $\|\psi\|_{C^2}$ and $\|u\|_{C^1}$ such that 
\begin{align}
\label{psi}
    |\nabla_{l} \psi|\leqslant Cu_{11},\quad \nabla^2_{11} \psi \geqslant -Cu_{11}^2+\sum_l \psi_{u_l}u_{11l}
\end{align}
for $u_{11}$ sufficiently large.
As $(\lambda(D^2 u), y)\in \Gamma_k^{n+m}$, we have 
\begin{align*}
    \s^{(n+m)}_{k-1;y_i}>0.
\end{align*}
Therefore, by Condition \eqref{con1}, we obtain
\begin{align}\label{kF}
F^{ij}u_{ij}=k\s_k^{(n+m)}-\sum_{i=1}^m y_i \s^{(n+m)}_{k-1;y_i} \leqslant kF.
\end{align}
By \eqref{cri}, we derive that
\begin{align}
    \label{psi_u}
    \frac{\psi_{u_l}u_{11l}}{u_{11}} + \psi_{u_l} L \lambda_l u_l = -\alpha\frac{\psi_{u_l}u_l}{u}.
\end{align}
Contracting \eqref{hess 31} with $F^{ij}$, and by \eqref{gra}, \eqref{hess s}, \eqref{psi}, \eqref{kF} and \eqref{psi_u}, we obtain 
\begin{equation}\label{hess 32}
\begin{aligned}
    0 \geqslant &\; \alpha\frac{kF}{u} - \alpha\frac{F^{ii}u_i^2}{u^2}-\frac{F^{pq,rs}u_{pq1}u_{rs1}}{u_{11}} 
- \alpha\frac{\psi_{u_l}u_l}{u}\\
    &+2\sum_{p>1}\frac{F^{ii}u_{1pi}^2}{(1+ 2\delta) u_{11}^2} - \frac{F^{ii}u_{11i}^2}{u_{11}^2} + LF^{ii}u_{ii}^2 -Cu_{11} -CL.
\end{aligned}
\end{equation}
Recall the following formula in \cite{And07},
\begin{equation}\label{fppqq1}
 \begin{aligned}
-F^{pq,rs}u_{pq1}u_{rs1}=&-F^{pp,qq}u_{pp1}u_{qq1} - 2\sum_{p>q}F^{pq,qp}u_{pq1}^2\\
\geqslant& -\sum_{p\neq q} F_{pq} u_{pp1}u_{qq1}+2\sum_{p>1}\frac{F_{p}-F_{1}}{\lambda_1-\lambda_p}u_{11p}^2,
\end{aligned}   
\end{equation}
where we used Lemma \ref{sigmak} (6) in the inequality.
Note that
\begin{equation}\label{fppqq2}
 2 \sum_{p>1} \frac{F^{ii}u_{1pi}^2}{(1+2 \delta) u_{11}^2 }\geqslant  2\sum_{p>1}\frac{F^{11}u_{11p}^2}{(1+ 2\delta) u_{11}^2} + 2\sum_{i>1}\frac{F^{ii}u_{1ii}^2}{(1+2 \delta) u_{11}^2}.
\end{equation}
Using the critical equation \eqref{cri} and the Cauchy–Schwarz inequality, we obtain for any $i\geqslant 2$ that 
\begin{align}\label{alpha}
   -\alpha\frac{F^{ii}u_i^2}{u^2}=-\frac{F^{ii}}{\alpha}\left(\frac{u_{11i}}{u_{11}}+Lu_iu_{ii}\right)^2\geqslant -\frac{2}{\alpha}\frac{F^{ii}u_{11i}^2}{u_{11}^2}-2\frac{L^2}{\alpha}F^{ii}u_i^2u_{ii}^2.
\end{align}
Then, by \eqref{fppqq1}, \eqref{fppqq2} and choosing $\alpha = L^2 \gg 4$, we obtain
\begin{equation}\label{fppqq3}
    \begin{aligned}
&-\frac{F^{pq,rs}u_{pq1}u_{rs1}}{u_{11}}+2\sum_{p>1}\frac{F^{ii}u_{1pi}^2}{(1+ 2\delta) u_{11}^2}-\left( 1+\frac{2}{\alpha}\right)\sum_{i\geqslant2}\frac{F^{ii}u_{11i}^2}{u_{11}^2}\\
\geqslant&-\frac{F^{pp,qq}u_{pp1}u_{qq1}}{u_{11}}+\left(\frac{2}{1+ 2\delta} - \frac{3}{2} \right) \sum_{p>1}\frac{F^{pp} u_{11p}^2}{ u_{11}^2} + 2\sum_{i>1}\frac{F^{ii}u_{1ii}^2}{(1+ 2\delta) u_{11}^2}\\
\geqslant &-\frac{F^{pp,qq}u_{pp1}u_{qq1}}{u_{11}}+ 2\sum_{i>1}\frac{F^{ii}u_{1ii}^2}{(1+2\delta)u_{11}^2}
\end{aligned}
\end{equation}
by assuming $\delta\leqslant \frac{1}{6}$.
By using Lemma \ref{sigmak} (4), we have 
\begin{align}
F^{11}u_{11}^2= \s_{k-1;1}^{(n+m)}(\bar{\la})u_{11}^2\geqslant \frac{k}{n+m}F u_{11}.
\end{align}
Due to Lemma \ref{Fconcavity}, combining 
\eqref{alpha} and \eqref{fppqq3}, \eqref{hess 32} becomes 
\begin{equation}\label{hess 33}
\begin{aligned}
    0\geqslant &-L^2 \frac{F^{11}u_1^2}{u^2}-2F^{ii}u_i^2 u_{ii}^2 
     + L F^{ii}u_{ii}^2 + \frac{C_3L^2}{u} - C_4L -C_5 u_{11}\\
   \geqslant &\left(\frac{Lu_{11}^2}{5}-\frac{C_1L^2}{u^2}\right)F^{11} + L\left(\frac{2kF}{5(n+m)}u_{11} + C_3\frac{L}{u} - C_4\right)\\
   & + \left(\frac{L}{5}-C_2\right) F^{ii}u_{ii}^2 +\left(\frac{LkF}{5(n+m)}-C_5\right) u_{11}.
\end{aligned}
\end{equation}
Here $C_i$, $ i=1,2,\dots,5$ all depend on $n$, $k$, $\|\psi\|_{C^2}$ and $\|u\|_{C^1}$. We choose $$L=\max\left\{ 4,5C_2,\frac{5 (n+m) C_5}{k\psi_0}\right\},$$ 
so that the last two terms in the last inequality of \eqref{hess 33} are positive. Then we obtain 
$$\max (-u)^{\alpha}\lambda_1 \leqslant \max\left\{ \sqrt{5C_1L}\sup |u|^{\alpha-1},\frac{5(n+m)LC_3\sup |u|^{\alpha-1}}{k \psi_0}, \frac{5(n+m)C_4\sup |u|^{\alpha}}{k \psi_0}\right\}$$ and complete the proof.
\end{proof}

\section{Proof of Theorem \ref{thm2} for Case (B)}

\begin{proof}
	Consider the following test function
	\[
	P = \beta \log (-u) + \log \max \{u_{\xi\xi}(x), 1\} + \frac{1}{2} |x|^2.
	\]
	Suppose that $P$ attains its maximum at the point $x_0 \in \Omega$.
   As before, we construct a unit vector field $v(x)$ near $x_0$ such that $v(x_0)=e_1(x_0)$ and $$\nabla_iv(x_0) = \sum_{p\neq 1}\frac{u_{1pi}}{\la_1+1-\la_p}e_p(x_0).$$
We may also assume that $\lambda_1(x_0)> 1$ is sufficiently large.
Then we consider the function
\begin{eqnarray*}
\tilde{P}(x)=\beta\log (-u)+\log\left( \nabla^2 u\left( v(x), v(x)\right)+ \langle v(x),e_1(x)\rangle^2 -1\right)+\frac{|x|^2}{2}.
\end{eqnarray*}
Differentiating $\tilde P$ at $x_0$, we obtain
\begin{equation}
 \label{diff-1}
0 = \frac{u_{11i}}{\lambda_1} + \frac{\beta u_i}{u} + x_i
\end{equation}
and
\begin{equation}
	\label{diff-2}
	0 \geqslant \frac{u_{11ii}}{\lambda_1} + \sum_{\ell >1} \frac{2 u_{1\ell i}^2}{\lambda_1^2(1+2\delta)} - \frac{u_{11i}^2}{\lambda_1^2} + \frac{\beta u_{ii}}{u} - \frac{\beta u_i^2}{u^2} +1.
\end{equation}
By \eqref{diff-1}, we have
\[
- \frac{\beta F^{ii} u_i^2}{u^2} \geqslant - \frac{2}{\beta} \frac{F^{ii} u_{11i}^2}{\lambda_1^2} - \frac{2}{\beta} F^{ii} x_i^2.
\]
Contracting \eqref{diff-2} with $F^{ii}$, by \eqref{kF}, \eqref{fppqq1}, \eqref{fppqq2}, \eqref{fppqq3} and the above inequality, we obtain 
\[
	\begin{aligned}
	0 \geqslant &\;-\frac{F^{pp,qq}u_{pp1}u_{qq1}}{u_{11}}+2\frac{(\nabla_1 F)^2}{u_{11} F}+2\sum_{i>1}\frac{F^{ii}u_{1ii}^2}{(1+2\delta) u_{11}^2}\\
	& - \left( 1 + \frac{2}{\beta} \right) \frac{F^{11}u_{111}^2}{u_{11}^2}+ \frac{k\beta \psi}{u} + \left( 1 - \frac{C}{\beta} \right) \sum_{i=1}^n F^{ii} - \frac{C}{\lambda_1},
\end{aligned}
\]
where $C$ depends on $||\psi||_{C^2(\Omega)}$ and the diameter of $\Omega$.
By \eqref{key F}, we obtain
\[
		0 \geqslant\frac{k\beta \psi}{u} + \Big( 1 - \frac{C}{\beta} \Big) \sum_{i=1}^n F^{ii}  - \frac{C}{\lambda_1},
\]
where $C$ depends on $||\psi||_{C^2}$ and the diameter of $\Omega$.

Recall the definition of $\hat \la$ and $\bar \la$ in Lemma \ref{Fconcavity}.
Since $k \leqslant n$, it is easy to see that
$\la_n = \hat \la_{\tau(i_0)}$ for some $i_0 \geqslant k$.
By Condition \eqref{con1} and Lemma \ref{sigmak} (9), we obtain
\[
F^{nn} = \sigma_{k-1; i_0} (\bar \la) \geqslant c(n,k) \sigma_{k-1} (\bar \la).
\]
Using Newton inequality, we have 
    \begin{align}\label{new k-1}
       \s_{k-1}(\hat{\la})\geqslant C(n,k) \s_1^{\frac{1}{k-1}} (\hat{\la})\s_k^{\frac{k-2}{k-1}}(\hat{\la})\geqslant C\lambda_1^{\frac{1}{k-1}}
    \end{align}
       for $C$ depending on $n$, $k$ and $\inf \psi$. 
We derive that
\begin{align}
    \label{sumFi}
    \sum F^{ii} \geqslant C \la_1^{\frac{1}{k-1}}.
\end{align}
 By Lemma \ref{Fconcavity} and \eqref{sumFi}, assuming $\beta$ large enough we arrive at
\[
		0 \geqslant \frac{k\beta \psi}{u} 
		+ \frac{1}{2}\sum_{i=1}^n F^{ii}- C\geqslant \frac{k\beta \psi}{u} 
		+ C\lambda_1^{\frac{1}{k-1}}- C.
\]
We finally obtain
\[
	-u\lambda_1^{\frac{1}{\beta}}\leqslant -u\lambda_1^{\frac{1}{k-1}}\leqslant  \frac{k\beta \psi}{C}  
\]
for $\beta$ sufficiently large depending on $n$, $k$, 
$\psi_0$, $||\psi||_{C^2}$ and the diameter of $\Omega$.
This completes the proof.
\end{proof}

 \section{Proof of Theorems \ref{thm1} and \ref{thm2} for Case (A)}

We claim that if $k=n$, then $(D^2 u)_{\min}>-A$, where $A = \sum_{i=1}^m y_i=a_1\geqslant 0$. Since $\hat \la = (\lambda^{\downarrow} (D^2 u), y)\in \Gamma_n^{n+m}$, we have 
\begin{align*}
    \s_{1;12\dots n-1}^{(n+m)} (\hat \la) =\la_n+\sum_{i=1}^m y_i>0,
\end{align*}
which proves the claim. Then, by the arguments in Sections 4 and 5, we complete the proof of case (A) of Theorems \ref{thm1} and \ref{thm2} when $k=n$.

Next we consider the case $k=n-1$. We claim that 
\begin{align}
    \label{n-1,n}
    \la_n\geqslant -\frac{1}{n-1}\la_1 - C
\end{align}
 for some $C>0$ depending only on $n$, $k$ and $A$. Assume that 
\begin{align}\label{ln assumption}
   \la_n< -\frac{1}{n-1}\la_1.
\end{align}
Since $(\lambda(D^2 u), y)\in \Gamma_{n-1}^{n+m}$, we have 
\begin{align*}
    \s_{1;12 \dots n-2}^{(n+m)} (\hat \la)=\la_{n-1}+\la_n+\sum_{i=1}^m y_i>0.
\end{align*}
Then, invoking \eqref{ln assumption}, we obtain
\begin{align}\label{ln-1}
 \la_{n-1}>-\la_n-\sum_{i=1}^m y_i>\frac{1}{n-1}\la_1-a_1 > 0
\end{align}
for sufficiently large $\la_1$.
Recall that 
\begin{align*}
    \s_{n-1}^{(n+m)} (\hat \la) =\la_n\s_{n-2;n}^{(n+m)} (\hat \la) + \s_{n-1;n}^{(n+m)} (\hat \la) > 0.
\end{align*}
It follows from $\la_n < 0$ that $\s_{n-1;n}^{(n+m)}>0$ and 
\begin{align}\label{lan}
 -\la_n < \frac{\s_{n-1;n}^{(n+m)} (\hat \la) }{\s_{n-2;n}^{(n+m)}(\hat \la) }.   
\end{align}
We can estimate as 
\begin{align*}
  \s_{n-1;n}^{(n+m)}(\hat \la)\leqslant \s_{n-1;n}^{(n)}(\la)+C\la_1^{n-2}.
\end{align*}
and 
\begin{align*}
   \s_{n-2;n}^{(n+m)}(\hat \la)=\s_{n-2;n}^{(n)}(\la)+\sum_{i=1}^{m}a_i\s_{n-i-2;n}^{(n)}\geqslant \s_{n-2;n}^{(n)}(\la).
\end{align*}
Combining \eqref{ln assumption} and \eqref{ln-1}, we have
\begin{align*}
       \s_{n-2;n}^{(n+m)}(\hat \la)>\la_1\dots\la_{n-2}\geqslant C\la_1^{n-2}.
\end{align*}
Thus, by the above three inequalities, \eqref{lan} becomes 
\begin{align*}
    -\la_n \leqslant \frac{\s_{n-1;n}^{(n)} (\la)}{\s_{n-2;n}^{(n)} (\la)} + C\leqslant \frac{\lambda_1}{n-1}+C,
\end{align*}
where we use the Newton-Maclaurin inequality.
Note that \eqref{fppqq3} still holds in this case for some $\alpha$ large only depending on $n$.\\

When $k=n-1$, to complete the proof of Case (A) of Theorem \ref{thm1} and Theorem \ref{thm2}, we also establish the following concavity inequality of $n-1$ sums-of-Hessian operator $F=\s_{n-1}^{(n)}+\sum_{r=1}^m a_r \s_{n-r-1}^{(n)}$. As for the special case $m=0$,   $F=\s_{n-1}$, the following inequality was first proved by Ren-Wang \cite{RW1}. A simple proof was recently provided by Tu \cite{Tu} by using the method of Zhang \cite{Zhang} and Lu-Tsai \cite{LT}. We hereby give a concise proof using our new concavity inequality in Lemma \ref{key}.
\begin{lemma}\label{key n-1}
		Suppose that $( \lambda, y ) \in \Gamma_{n-1}^{n+m}$, $\la_1\geqslant\la_2\geqslant\cdots\geqslant \la_n$ and $C>F>1/C$. Then, there exist constants $\gamma \in (0, 1)$ and $K_0>0$ depending only on $n,\ k,\ C$ and $(a_1,\dots,a_m)$, such that if $\lambda_1>K_0$, the following inequality holds at $\lambda^{\downarrow}$ :
		\begin{equation}
			\label{positive-n-1}
			- \sum_{p\neq q} \frac{F_{pq} \xi_p \xi_q}{F}
			+ 2\frac{(\sum_i F_i \xi_i)^2}{F^2} 
			+ 2 \sum_{i > 1} \frac{F_i \xi_i^2}{(\la_1-\la_i+1)F} 
			\geqslant \left(1+\gamma\right) \frac{F_1 \xi_1^2}{\lambda_1 F}
		\end{equation}
        for any vector $\xi = (\xi_1, \ldots, \xi_n) \in \mathbb{R}^{n}$.
	\end{lemma}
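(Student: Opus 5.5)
The plan is to split according to whether the smallest eigenvalue obeys a dynamic semi-convexity bound, and to reduce the first case to Lemma \ref{Fconcavity}.

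\textbf{Case 1: $\lambda_n \geqslant -\delta\lambda_1$,} with $\delta$ the small constant of Lemma \ref{key}. Since $y$ is fixed and $\lambda_1>K_0$ is large, we also have $y_{\min}\geqslant -\delta\lambda_1$ and $\lambda_1> K_0' F^{1/(n-1)}$, because $F<C$. All hypotheses of Lemma \ref{Fconcavity} (lifting to $\sigma_{n-1}^{(n+m)}(\hat\lambda)$ and applying Lemma \ref{key}) therefore hold. For every $i>1$ we have $\lambda_1-\lambda_i+1\leqslant (1+\delta)\lambda_1+1\leqslant (1+2\delta)\lambda_1$ once $\lambda_1\geqslant \delta^{-1}$. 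Hence the third term of \eqref{positive-n-1} dominates the third term of \eqref{key F}, and \eqref{positive-n-1} follows with $\gamma=1/(1+16k)$.

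\textbf{Case 2: $\lambda_n<-\delta\lambda_1$.} Here I would first extract structure from $\hat\lambda\in\Gamma^{n+m}_{n-1}$, as in the proof of \eqref{n-1,n}.
\begin{itemize}
\item From $\lambda_{n-1}+\lambda_n+a_1>0$ we get $\lambda_{n-1}\geqslant \delta\lambda_1-a_1\geqslant \tfrac{\delta}{2}\lambda_1$. So $\lambda_1,\dots,\lambda_{n-1}$ are all comparable to $\lambda_1$.
\item By \eqref{n-1,n}, $|\lambda_n|\leqslant \frac{\lambda_1}{n-1}+C$.
\item Consequently $F_n=\sigma^{(n+m)}_{n-2;n}\approx \lambda_1\cdots\lambda_{n-2}\sim\lambda_1^{n-2}$ and $F_i\sim \lambda_1^{n-3}|\lambda_n|$ for $i<n$, while $F\approx 1$.
\end{itemize}
The matrix is then "nondegenerate" in the sense of Ren--Wang, and I would follow the Lu--Tsai/Tu route, working directly with $\sigma^{(n+m)}_{n-1}(\hat\lambda)$ and $\hat\xi=(\xi,0)$.
\begin{enumerate}
\item Write $-\sum_{p\neq q}F_{pq}\xi_p\xi_q/F+(\sum F_i\xi_i)^2/F^2 = -\partial^2_\xi \log F$. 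This is nonnegative, and in fact at least $\frac{1}{n-1}(\partial_\xi\log F)^2$ by concavity of $F^{1/(n-1)}$ on $\Gamma^{(n+m)}_{n-1}$. This leaves a spare term $(\sum_iF_i\xi_i)^2/F^2$.
\item Isolate $\xi_1$ via $F_1\xi_1=\sum_iF_i\xi_i-\sum_{i>1}F_i\xi_i$ and Cauchy--Schwarz:
\[
\frac{F_1^2\xi_1^2}{F^2}\leqslant (1+\varepsilon)\frac{(\sum_i F_i\xi_i)^2}{F^2}+C_\varepsilon\frac{(\sum_{i>1}F_i\xi_i)^2}{F^2}.
\]
The last sum is controlled by $\sum_{i>1}F_i\xi_i^2/((\lambda_1-\lambda_i+1)F)$ times $\sum_{i>1}F_i(\lambda_1-\lambda_i+1)/F$.
\item Compare $F_1/(\lambda_1F)$ with $F_1^2/F^2$. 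We have $\lambda_1F_1=F-\sigma^{(n+m)}_{n-1;1}$, and $\sigma^{(n+m)}_{n-1;1}\approx\lambda_2\cdots\lambda_{n-1}\lambda_n<0$ is of size $\lambda_1^{n-3}|\lambda_n|\cdot\lambda_1$. Thus $\lambda_1F_1/F$ is large, so $(1+\gamma)F_1/(\lambda_1F)\ll F_1^2/F^2$. The needed inequality then has slack of order $\lambda_1F_1/F$, which absorbs the constants $C_\varepsilon$.
\end{enumerate}

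\textbf{Main obstacle.} The delicate step is 2--3. One must check that
\[
\sum_{i>1}F_i(\lambda_1-\lambda_i+1)\lesssim F\cdot\frac{\lambda_1F_1}{F}\cdot\text{const}.
\]
The term $i=n$ is the problem: $F_n\sim\lambda_1^{n-2}$ and $\lambda_1-\lambda_n\leqslant \frac{n}{n-1}\lambda_1+C$. This is exactly where the sharp constant $\frac{1}{n-1}$ in \eqref{n-1,n} must be used. I would first try to handle the $\xi_n$ contribution using the exact $\sigma_{n-1}$ identity $\sum_iF_i\lambda_i=(n-1)F-\sum_r y_r\sigma_{n-2;y_r}$, together with the decomposition $\xi=c\lambda+\zeta$ as in Lemma \ref{key}. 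If that fails, I would fall back on Ren--Wang's explicit two-variable computation for the $(\xi_1,\xi_n)$ block. Constants would then depend on $\delta$ (hence $n$), $C$ and $a$.
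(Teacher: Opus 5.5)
Your Case 1 is correct and is the same reduction the paper makes. The gap is in Case 2: steps 1--3 cannot be made to work.

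\textbf{Why steps 1--3 fail.} Step 1 keeps only $\frac{1}{n-1}(\partial_\xi\log F)^2$ from $-\partial_\xi^2\log F$, and this vanishes on the hyperplane $\sum_iF_i\xi_i=0$. On that hyperplane your chain leaves only $2T$, where $T=\sum_{i>1}F_i\xi_i^2/((\lambda_1-\lambda_i+1)F)$. This must dominate $(1+\gamma)F_1\xi_1^2/(\lambda_1F)=(1+\gamma)(\sum_{i>1}F_i\xi_i)^2/(\lambda_1F_1F)$.

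Your Cauchy--Schwarz step is sharp there: take $\xi_i\propto\lambda_1-\lambda_i+1$ for $i>1$. So you would need $\sum_{i>1}F_i(\lambda_1-\lambda_i+1)\leqslant\frac{2}{1+\gamma}\lambda_1F_1$ with this fixed constant. The slack of order $\lambda_1F_1/F$ only helps the $(\sum_iF_i\xi_i)^2$ piece, not this one.

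Now set $\tilde\lambda=\lambda/\lambda_1$ and $\tau_i=1/\tilde\lambda_i$.
\begin{itemize}
\item $F\approx1$ forces $\sum_j\tau_j=O(\lambda_1^{-1})$.
\item Consequently $F_i=\lambda_1^{n-2}|\sigma_n(\tilde\lambda)|\tau_i^2(1+O(\lambda_1^{-1}))$.
\item The required ratio is therefore $\sum_{i\geqslant2}(\tau_i^2-\tau_i)+O(\lambda_1^{-1})\geqslant\tau_n^2+|\tau_n|-o(1)\geqslant n(n-1)-o(1)$, far above $2$.
\end{itemize}
The sharp constant in \eqref{n-1,n} is just the statement $|\tau_n|\gtrsim n-1$. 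It makes the $i=n$ term worse, not better.

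A concrete instance: take $n=3$, $m=0$, $\lambda\approx(L,L/2,-L/3)$ scaled so that $\sigma_2=1$, and $\xi=(-9,0,1)$. Then $\sum_iF_i\xi_i\approx0$ and $2T\approx 9/4$, while the right-hand side is $\approx\frac{27}{2}(1+\gamma)$. The lemma survives only because $-\sum_{p\neq q}F_{pq}\xi_p\xi_q=18$, which is exactly the term your step 1 discards.

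\textbf{What the paper does instead.} The missing idea is to compute that Hessian term rather than bound it through concavity of $F^{1/(n-1)}$.
\begin{itemize}
\item The paper rewrites $F_{pq}$ and $F_p$ through $\sigma_{n-1;p}$ and $\sigma_{n-1;pq}$ (see \eqref{k,1}, \eqref{k,2}, \eqref{sk-1 i}), normalizes, and substitutes $\eta_p=\xi_p/\tilde\lambda_p^2$.
\item The left side then splits into a perfect square $(\sigma_n\sum_p\eta_p+O(\lambda_1^{-1}))^2/\sigma^{(n+m)}_{n-1}(\hat\lambda)$, whose coefficient is of order $\lambda_1^{n-1}$, plus $|\sigma_n|$ times an indefinite diagonal form $c_1\eta_1^2+\sum_{i=2}^{n-1}c_i\eta_i^2-c_n\eta_n^2$. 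Here $c_1=1-\frac{1}{2n}$, $c_i\approx 2\tilde\lambda_i/(1-\tilde\lambda_i)$ and $c_n=2|\tilde\lambda_n|/(1+|\tilde\lambda_n|)$.
\item The diagonal form is positive definite on $\{\sum_p\eta_p=0\}$ iff $1/c_n>\sum_{i<n}1/c_i$. Using $\sum_j\tau_j\approx0$, this reduces to $\frac{n}{2}>\frac{2n}{2n-1}$, which is where $n\geqslant3$ enters (claim \eqref{k,Q3}).
\item The huge square then controls the component along $e$.
\end{itemize}

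So the relevant splitting is along $e$ in the $\eta$-variables, i.e.\ along $(\lambda_p^2)_p$ in $\xi$, not $\xi=c\lambda+\zeta$ as you proposed. It also couples all intermediate indices through the constraint $\sum_p\eta_p=0$. Hence your fallback of a two-variable $(\xi_1,\xi_n)$ block computation would not suffice for $n\geqslant4$.
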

\begin{proof}
Without loss of generality, we assume that $|\xi| = 1$.
Combining the result in Lemma \ref{key} and Lemma \ref{Fconcavity}, we only need to consider the case 
\begin{align}\label{ln d}
  \la_n < -\delta \la_1.  
\end{align}
Denote that 
\[\tl_i=\frac{\la_i}{\la_1} \; \mbox{and} \; \tilde y_j = \frac{y_j}{\la_1},
\]
where $1 \leqslant i \leqslant n$ and $ 1\leqslant j \leqslant m$.
By \eqref{n-1,n}, \eqref{ln-1} and \eqref{ln d}, we see that for $\la_1$ sufficiently large only depending on $n$ and $(a_1,\dots,a_m)$,
\begin{align}\label{ln}
    1=\tl_1 \geqslant \tl_2 \geqslant \cdots \geqslant \tl_{n-1} > \frac{\de}{2} > 0 > -\de > \tl_n > - \frac{2}{n-1}.
\end{align}
Let $\tl=(\tl_1,\dots,\tl_n)$, $\hat{\la}=(\tl_1,\dots,\tl_n,\tilde y_1,\dots, \tilde y_m)$. Note that
\[ 
\s_k^{(n+m)}(\hat{\la})=\frac{\s_k^{(n+m)}}{\la_1^k}\sim O(\la_1^{-k}).
\]

As in the proof of Lemma \ref{key}, it suffices to verify that
    
\begin{equation}\label{k}
\begin{aligned}
Q := &\; - \sum_{p\neq q} \s_{k-2;pq}^{(n+m)}(\hat{\la})\xi_p \xi_q + \frac{\left(\sum_i \s_{k-1;i}^{(n+m)}(\hat{\la}) \xi_i\right)^2}{\s_k^{(n+m)}(\hat{\la})} \\
&\; + 2 \sum_{i > 1} \frac{\s_{k-1;i}^{(n+m)}(\hat{\la}) \xi_i^2}{\tl_1-\tl_i+\la_1^{-1}} - \left(1+\frac{1}{2n}\right) \frac{\s_{k-1;1}^{(n+m)}(\hat{\la}) }{\tl_1}\xi_1^2 \geqslant 0.
\end{aligned}
\end{equation}
by assuming $\gamma\leq\frac{1}{2n}.$
Since 
\[
\s_k^{(n+m)}=\tl_p\tl_q\s_{k-2;pq}^{(n+m)}+(\tl_p+\tl_q)\s_{k-1;pq}^{(n+m)}+\s_{k;pq}^{(n+m)},
\]
then we have 
\[
\s_{k-2;pq}^{(n+m)}(\hat{\la})\xi_p \xi_q=\frac{\s_k^{(n+m)}}{\tl_p\tl_q}\xi_p \xi_q-\frac{(\tl_p+\tl_q)\s_{k-1;pq}^{(n+m)}}{\tl_p\tl_q}\xi_p \xi_q-\frac{\s_{k;pq}^{(n+m)}}{\tl_p\tl_q}\xi_p \xi_q.
\]
Combining 
\[\s_{k;p}^{(n+m)}=\tl_q\s_{k-1;pq}^{(n+m)}+\s_{k;pq}^{(n+m)},\]
we obtain 
 \begin{equation}\label{k,1}
F_{pq}(\tl)\xi_p \xi_q=\frac{\s_k^{(n+m)}}{\tl_p\tl_q}\xi_p \xi_q-\frac{\s_{k;p}^{(n+m)}+\s_{k;q}^{(n+m)}}{\tl_p\tl_q}\xi_p \xi_q+\frac{\s_{k;pq}^{(n+m)}}{\tl_p\tl_q}\xi_p \xi_q.
\end{equation}
Besides, as 
\begin{align}\label{sk-1 i}
  \s_{k-1;i}^{(n+m)}=\frac{\s_k^{(n+m)}}{\tl_i}-\frac{\s_{k;i}^{(n+m)}}{\tl_i},  
\end{align}
we have 
\begin{equation}\label{k,2}
\begin{aligned}
 &\frac{1}{\s_k^{(n+m)}} \s_{k-1;p}^{(n+m)}\s_{k-1;q}^{(n+m)}\xi_p\xi_q\\
 =&\; \frac{1}{\s_k^{(n+m)}}\left(\frac{\s_k^{(n+m)}}{\tl_p}-\frac{\s_{k;p}^{(n+m)}}{\tl_p}\right)\left(\frac{\s_k^{(n+m)}}{\tl_q}-\frac{\s_{k;q}^{(n+m)}}{\tl_q}\right)\xi_p\xi_q\\
=&\; \frac{\s_k^{(n+m)}}{\tl_p\tl_q}\xi_p\xi_q - \frac{\s_{k;p}^{(n+m)}+\s_{k;q}^{(n+m)}}{\tl_p\tl_q}\xi_p\xi_q + \frac{1}{\s_k^{(n+m)}}\frac{\s_{k;p}^{(n+m)}}{\tl_p}\frac{\s_{k;q}^{(n+m)}}{\tl_q}\xi_p\xi_q.
\end{aligned}
\end{equation} 
By \eqref{k,1}, \eqref{k,2} and \eqref{sk-1 i}, we have the first two terms in \eqref{k} become
\begin{equation}
\label{First Two term}
\begin{aligned}
& - \sum_{p\neq q} F_{pq}(\tl)\xi_p \xi_q
		+  \frac{\left(\sum_i F_i(\tl) \xi_i\right)^2}{F} \\
        =&-\sum_{p\neq q} \frac{\s_{k;pq}^{(n+m)}}{\tl_p\tl_q}\xi_p \xi_q+\sum_{p\neq q}\frac{1}{\s_k^{(n+m)}}\frac{\s_{k;p}^{(n+m)}}{\tl_p}\frac{\s_{k;q}^{(n+m)}}{\tl_q}\xi_p\xi_q\\
&+\sum_{i=1}^n\frac{\s_k^{(n+m)}}{\tl_i^2}\xi_i^2-	\sum_{i=1}^n\frac{2\s_{k;i}^{(n+m)}}{\tl_i^2}\xi_i^2
+\sum_{i=1}^n\frac{1}{\s_k^{(n+m)}}\left(\frac{\s_{k;i}^{(n+m)}}{\tl_i}\xi_i\right)^2.
\end{aligned}
\end{equation}
Then, from \eqref{sk-1 i} and \eqref{First Two term} it follows that
\[
\begin{aligned}
Q=&-\sum_{p\neq q} \frac{\s_{k;pq}^{(n+m)}}{\tl_p\tl_q}\xi_p \xi_q+\sum_{p, q}\frac{1}{\s_k^{(n+m)}}\frac{\s_{k;p}^{(n+m)}}{\tl_p}\frac{\s_{k;q}^{(n+m)}}{\tl_q}\xi_p\xi_q\\
&- \frac{1}{2n} \frac{\s_{k}^{(n+m)} \xi_1^2}{\tl_1^2} +\sum_{i=2}^n\frac{\s_k^{(n+m)}}{\tl_i^2}\xi_i^2 - \left(1-\frac{1}{2n}\right) \frac{\s_{k;1}^{(n+m)}}{\tl_1^2}\xi_1^2\\
&-2\sum_{i=2}^n\frac{\s_{k;i}^{(n+m)}}{\tl_i^2}\xi_i^2+2 \sum_{i > 1} \frac{\left(\s_k^{(n+m)}-\s_{k;i}^{(n+m)}\right)\xi_i^2}{\tl_i(\tl_1-\tl_i+\la_1^{-1})}.
\end{aligned}
\]
That is
\begin{equation}
\label{k,Q1}
\begin{aligned}
Q =&-\sum_{p\neq q} \frac{\s_{k;pq}^{(n+m)}}{\tl_p\tl_q}\xi_p \xi_q+\frac{1}{\s_k^{(n+m)}}\left(\sum_{p=1}^n\frac{\s_{k;p}^{(n+m)}}{\tl_p}\xi_p\right)^2\\
&- \frac{1}{2n} \frac{\s_{k}^{(n+m)} \xi_1^2}{\tl_1^2} +\s_k^{(n+m)}\sum_{i=2}^n\frac{\tl_1+\tl_i+\la_1^{-1}}{(\tl_1-\tl_i+\la_1^{-1})\tl_i^2}\xi_i^2\\
&- \left(1-\frac{1}{2n}\right)\frac{\s_{k;1}^{(n+m)}}{\tl_1^2}\xi_1^2-2\sum_{i=2}^n\frac{\s_{k;i}^{(n+m)} (\tl_1 + \la_1^{-1} )}{(\tl_1-\tl_i+\la_1^{-1}) \tl_i^2}\xi_i^2.
\end{aligned}
\end{equation}
Let $k=n-1$. Since $\tilde{y_i}=\frac{y_i}{\la_1}$ for $i=1,\dots,m$, we have 
\begin{align*}
   \s_{n-1;pq}^{(n+m)} (\hat \la) \sim O(\la_1^{-1})
\end{align*}
and 
\begin{align*}
\s_{n-1;p}^{(n+m)}(\hat{\la})=\s_{n-1;p}^{(n)} (\tl)+\sum_{i=1}^m \tilde a_i\s_{n-i-1;p}^{(n)}(\tl)\sim \frac{\s_n^{(n)}(\tl)}{\tilde{\la}_p}+O(\la_1^{-1}),
\end{align*}
where $\tilde a_i = e_i (\tilde y)$ for $i = 1,\ldots, m$.
Let 
\[
\eta_p=\frac{\xi_p}{\tl_p^2}, \;\mbox{where}\; 1 \leqslant p \leqslant n. 
\]
We obtain from \eqref{k,Q1} that
\begin{equation}\label{k,Q2}
\begin{aligned}
Q\gtrsim &\; \frac{1}{\s_{n-1}^{(n+m)}(\hat \la) } \left(\sum_{p=1}^n\left(\s_n^{(n)} +\sum_{i=1}^m \tilde a_i \tl_p\s_{n-i-1;p}^{(n)} \right)\eta_p\right)^2\\
&-\left(1-\frac{1}{2n}\right)\s_n^{(n)} \tl_1 \eta_1^2 - \sum_{i=2}^{n-1}\frac{ 2\s_n^{(n)} \tl_1\tl_i + O(\la_1^{-1})}{\tl_1-\tl_i+\la_1^{-1}}\eta_i^2\\
&-\frac{2\s_n^{(n)} \tl_1\tl_n}{\tl_1-\tl_n + \la_1^{-1}}  \eta_n^2 -O(\la_1^{-1})|\eta|^2.
\end{aligned}
\end{equation}
Let 
\[
e = (1,\dots,1)\in \mathbb{R}^n \; \mbox{and} \; \tau_p=\frac{1}{\tl_p}.
\]
We claim that, at $\tilde \la$, 
\begin{equation}\label{k,Q3}
\begin{aligned}
  \tilde{Q} :=&\; -\left(1-\frac{1}{2n}\right)\frac{\s_n^{(n)}}{\tau_1}\zeta_1^2-2\sum_{i=2}^{n-1}\frac{\s_n^{(n)}}{\tau_i-\tau_1+(\tl_i\la_1)^{-1}+\frac{1}{6n}}\zeta_i^2\\
  &\; -2\frac{\s_n^{(n)}}{\tau_n-\tau_1+(\tl_n\la_1)^{-1}}\zeta_n^2
\geqslant c|\zeta|^2
\end{aligned}
\end{equation}
for any vector 
\begin{align}\label{gam perp}
    \zeta\perp e,
\end{align}
 where $c>0$ depends only on $n$ and $\delta$. 
 Next we note that the positive coefficients of \eqref{k,Q3}, 
\begin{align*}
    \min\left\{ -\left(1-\frac{1}{2n}\right)\frac{\s_n^{(n)}}{\tau_1},  -2\frac{\s_n^{(n)}}{\tau_i-\tau_1+(\tl_i\la_1)^{-1}+\frac{1}{6n}}\right\} \geqslant -\frac{1}{c_0}\s_n^{(n)} >0
\end{align*}
  for any $i=2,\dots, n-1$, and the negative coefficients of \eqref{k,Q3}, 
  \begin{align*}
     0>-2\frac{\s_n^{(n)}}{\tau_n-\tau_1+(\tl_n\la_1)^{-1}} \geqslant   c_0\s_n^{(n)}, 
  \end{align*} 
  where $c_0>1$ only depends on $n$ and $\delta$. 
  If 
  \begin{align}\label{gam n}
      |\zeta|\geqslant 2nc_0|\zeta_n|,
  \end{align}
then there exists some $1\leqslant i\leqslant n-1$, such that $|\zeta_i|\geqslant \frac{1}{n}|\zeta|$ and 
\begin{align}\label{Q1}
\tilde Q\geqslant -\s_n^{(n)} \frac{|\zeta|^2}{n^2c_0} + \s_n^{(n)} \frac{|\zeta|^2}{4n^2c_0} \geqslant -\s_n^{(n)} \frac{|\zeta|^2}{2n^2c_0}.
\end{align}

Otherwise, we assume that 
\begin{align}\label{gam n reverse}
|\zeta|< 2nc_0|\zeta_n|.    
\end{align}
\eqref{gam perp} implies that
\begin{align}\label{gam perp2}
   \sum_{i=1}^{n-1}\zeta_i=-\zeta_n.
\end{align}
By Cauchy-Schwarz inequality,
\begin{align}\label{CS ineq}
    \sum_{i=1}^{n-1}\frac{\zeta_i^2}{b_i}\geqslant \frac{(\sum_{i=1}^{n-1}\zeta_i)^2}{\sum_{i=1}^{n-1}b_i}
\end{align}
for any $b_i>0$.
Denote by
\begin{align}\label{CS a}
  b_1=\frac{2n\tau_1}{2n-1},\quad b_i=\frac{\tau_i-\tau_1+(\tl_i\la_1)^{-1}+\frac{1}{6n}}{2}, \quad 2\leqslant i\leqslant n-1. 
\end{align}
And
\begin{equation}
\begin{aligned}\label{a}
  \sum_{i=2}^{n-1}b_i =&\; \frac{2n}{2n-1}\tau_1+\frac{\s_1(\tau)-(n-1)\tau_1-\tau_n}{2} +\sum_{i=1}^{n-1}\frac{(\tl_i\la_1)^{-1}}{2}+\frac{n-2}{12n} \\
    =&\; \frac{2n}{2n-1}\tau_1+\frac{\frac{\s_{n-1}}{\s_n}(\tl)-(n-1)\tau_1-\tau_n}{2} +\frac{n-2}{12n}+\sum_{i=1}^{n-1}\frac{(\tl_i\la_1)^{-1}}{2} \\
    \sim&\; -\frac{\tau_n}{2}+\left(\frac{1}{2n-1}-\frac{n-3}{2}\right)\tau_1 +\frac{1}{12}-\frac{1}{6n}+O(\la_1^{-1}) \\
     \leqslant &\; \frac{1}{2} \left(-\tau_n+\frac{2\tau_1}{3}-\frac{1}{3n}\right),
\end{aligned}    
\end{equation}
where we use the fact that $n\geqslant 3$.
Combining \eqref{k,Q3}, \eqref{CS ineq} and \eqref{a}, we have 
\begin{equation}\label{k,Q4}
\begin{aligned}
\tilde{Q} \geqslant&-2\s_n^{(n)} \left(\frac{1}{-\tau_n+\frac{2\tau_1}{3}-\frac{1}{3n}}+\frac{1}{\tau_n-\frac{2}{3}\tau_1}\right)\zeta_n^2\\
    =& -\frac{2}{3n}\s_n^{(n)} \zeta_n^2\frac{\tau_1}{(\tau_n-\frac{2\tau_1}{3}+\frac{1}{3n})(\tau_n- \frac{2}{3} \tau_1)}\geqslant 
    - c(n) \s_n^{(n)} \zeta_n^2,
\end{aligned}
\end{equation}
where we use \eqref{ln} in the last inequality. Then from \eqref{gam n reverse} and \eqref{k,Q4}, we have 
\begin{align}\label{Q2}
    \tilde{Q}\geqslant - \frac{c(n) \s_n^{(n)}}{4n^2c_0^2}|\zeta|^2.
\end{align}
Note that $-\s_n^{(n)} (\tl)=-\tl_1\cdots\tl_n\geqslant C_{\delta}$ by \eqref{ln}.
Combining \eqref{Q1} and \eqref{Q2}, we prove the claim \eqref{k,Q3}.

Now we are ready to explain that \eqref{k,Q2} is nonnegative.\\
If $\eta\perp e$, then due to \eqref{k,Q3},
\eqref{k,Q2} reduces to
\begin{equation*}
\begin{aligned}
Q\gtrsim &\; - \left(1-\frac{1}{2n}\right)\s_n^{(n)} \tl_1\eta_1^2 + 2\s_n^{(n)} \sum_{i=2}^{n-1}\frac{\tl_1\tl_i}{\tl_1-\tl_i+\la_1^{-1}+\frac{1}{6n}}\eta_i^2\\
&\; +
2\s_n^{(n)}\frac{\tl_1\tl_n}{\tl_1-\tl_n+\la_1^{-1}}\eta_n^2
 - O(\la_1^{-1})|\eta|^2
\geqslant 0,
\end{aligned}
\end{equation*}
where we define $\eta$ by $\eta_p=\frac{\xi_p}{\tl_p^2}.$
If $\eta \not\perp e$, then by appropriate normalization we only need to prove that $Q$ is nonnegative definite for any direction vector $\eta=\zeta + e$, where $\zeta\perp e$. This is because $Q$ is homogeneous of degree 2 (see equation \eqref{k}). Due to \eqref{k,Q3}, \eqref{k,Q2} reduces
\begin{equation}\label{k,Q6}
\begin{aligned}
Q\gtrsim &\; \frac{1}{\s_{n-1}^{(n+m)} (\hat \la)} \left(n\s_n^{(n)} +\sum_{p=1}^n\sum_{i=1}^m \tilde a_i\tl_p\s_{n-i-1;p}^{(n)} \eta_p\right)^2\\
&\;- \left\{\left(1-\frac{1}{2n}\right)\s_n^{(n)} \tl_1\eta_1^2 + 2\s_n^{(n)} \sum_{i=2}^{n-1}\frac{\tl_1\tl_i}{\tl_1-\tl_i+\la_1^{-1}+\frac{1}{6n}}\eta_i^2\right.\\
&\;\left. +
2\s_n^{(n)}\frac{\tl_1\tl_n}{\tl_1-\tl_n+\la_1^{-1}}\eta_n^2\right\}
 - O(\la_1^{-1})(|\zeta|^2+n) \\
\geqslant &\; \frac{1}{\s_{n-1}^{(n+m)}(\hat \la)} \left(n\s_n^{(n)} +\sum_{p=1}^n\sum_{i=1}^m \tilde a_i\tl_p\s_{n-i-1;p}^{(n)} \eta_p\right)^2
+ \frac{c|\zeta|^2}{2}-C,
\end{aligned}
\end{equation}
where we assume that $\la_1$ is sufficiently large depending only on $n$ and $(a_1,\dots,a_m)$. If $|\zeta|^2\geqslant\frac{2C}{c}$, then \eqref{k,Q6} is positive since the first term is a perfect square.
Thus, we assume that $|\zeta|^2<\frac{2C}{c}$. Then due to \eqref{k,Q6} we have 
\begin{align*}
Q\geqslant \frac{1}{\s_{n-1}^{(n+m)} (\hat \la)}\left(n\s_n^{(n)} +\sum_{p=1}^n\sum_{i=1}^m \tilde a_i\tl_p\s_{n-i-1;p}^{(n)} \eta_p\right)^2-C > 0.   
\end{align*}
Here we use the fact that 
\[
\tilde a_i\tl_p\s_{n-i-1;p}^{(n)} (\tilde \la)\eta_p\sim O(\la_1^{-1})\;
\mbox{and}\;
\s_{n-1}^{(n+m)}(\hat \la)\sim O(\la_1^{-(n-1)}).
\]
The proof is completed.
\end{proof}

\section{Condition \eqref{con2} in Theorems \ref{thm1} and \ref{thm2}}
In fact, we can also derive the same Pogorelov interior estimates for Theorem \ref{thm1} and Theorem \ref{thm2} under Condition \eqref{con2}: $a_i\geqslant 0$ and $\la\in \Gamma_{k-1}^n$ by noting that 
\begin{align}\label{kF2}
\sum_{i=1}^n F^{ii}u_{ii}=k \sigma_k^{(n)}+\sum_{i=1}^m(k-i)a_i\sigma_{k-i}^{(n)} \leqslant kF.
\end{align}
In addition, we can estimate $\sum_{i=1}^n F^{ii}$ as 
\begin{align*}
    \sum_{i=1}^n F^{ii}=(n-k+1)\s_{k-1}^{(n)} + \sum_{i=1}^m (n-k+i+1)a_i\s_{k-i-1}^{(n)}.
\end{align*}
Using \eqref{new k-1}, we have
\begin{align*}
    \s_{k-1}^{(n+m)}=\s_{k-1}^{(n)}+\sum_{i=1}^ma_i \s_{k-i-1}^{(n)} \geqslant C(\s_1^{(n+m)})^{\frac{1}{k-1}}\geqslant C\la_1^{\frac{1}{k-1}}.
\end{align*}
There exists some $0\leqslant i\leqslant m$, such that $a_i \s_{k-i-1}^{(n)} \geqslant \frac{C}{m+1}\la_1^{\frac{1}{k-1}}$ with $a_0=1$. Then we obtain 
\begin{align}
\label{sumFi2}
    \sum_{i=1}^n F^{ii}\geqslant C \la_1^{\frac{1}{k-1}}.
\end{align}
By replacing \eqref{kF} with \eqref{kF2} and \eqref{sumFi} with \eqref{sumFi2},
 we complete the proof of Theorem \ref{thm1} and Theorem \ref{thm2} under Condition \eqref{con2} by the same argument.


\begin{thebibliography}{99}

\bibitem{And07} B. Andrews, Pinching estimates and motion of hypersurfaces by curvature functions, J. Reine Angew. Math. 608 (2007), 17--33.
	
	
	
\bibitem{BCGJ}  J. Bao, J. Chen, B. Guan and M. Ji, Liouville property and regularity of a Hessian quotient equation, Amer. J. Math. 125 (2003), no. 2, 301–316.	

\bibitem{Ca} E. Calabi, 
Improper affine hyperspheres of convex type and a generalization of a theorem by K. J\"orgens,
Michigan Math. J. 5 (1958), 105-126.

\bibitem{CY}  S.-Y. A. Chang and Y. Yuan, A Liouville problem for the sigma-2 equation, Discrete Contin. Dyn. Syst. 28 (2010), no. 2, 659-664.

\bibitem{CDH} C.Q. Chen, W.S. Dong and F. Han, Interior Hessian estimates for a class of Hessian type equations, Calc. Var. Partial Differential Equations 62 (2023), no. 2, Paper No. 52, 15 pp.

\bibitem{CHO} C.Q. Chen, F. Han and Q.Z. Ou, The interior $C^2$ estimate for the Monge-Amp\`ere equation in dimension $n=2$, Anal. PDE 9 (2016), no. 6, 1419-1432.

\bibitem{CX} L. Chen and N. Xiang, Rigidity theorems for the entire solutions of 2-Hessian equation, J. Differential Equations 267 (2019), no. 9, 5202–5219.

\bibitem{CYau} S.-Y. Cheng and S.-T. Yau, Complete affine hypersurfaces. I. The completeness of affine metrics, Comm. Pure Appl. Math. 39 (1986), no. 6, 839-866.


\bibitem{CW} K.-S. Chou and X.-J. Wang, A variational theory of the Hessian equation. Comm. Pure Appl. Math. 54 (2001), no. 9, 1029–1064.


\bibitem{CD} J.C. Chu and S. Dinew, Liouville theorem for a class of Hessian equations, arXiv:2306.13825.

\bibitem{CJ} J.C. Chu and H. Jiao, Curvature estimates for a class of Hessian type equations, Calc. Var. Partial Differential Equations 60 (2021), Paper No. 90, 18 pp.

\bibitem{Dinew} S. Dinew, Interior estimates for p-plurisubharmonic functions, Indiana Univ. Math. J. 72 (2023), no. 5, 2025-2057.


\bibitem{D} H.J. Dong, Hessian equations with elementary symmetric functions, Comm. Partial Differential Equations 31 (2006), no. 7-9, 1005--1025.

\bibitem{Dong} W.S. Dong, Curvature estimates for $p$-convex hypersurfaces of prescribed curvature, Rev. Mat. Iberoam. 39 (2023), no. 3, 1039-1058.



\bibitem{Du} S.-Z. Du, Necessary and sufficient conditions to Bernstein theorem of a Hessian equation, Trans. Amer. Math. Soc. 375 (2022), no. 7, 4873–4892.





\bibitem{GQ} P.F. Guan and G.H. Qiu, Interior $C^2$ regularity of convex solutions to prescribing scalar curvature equations, Duke Math. J. 168 (2019), no. 9, 1641–1663. 

\bibitem{GRW} P.F. Guan, C.Y. Ren and Z.Z. Wang, Global $C^2$ estimates for convex solutions of curvature equations, Comm. Pure Appl. Math. 68 (2015), no. 8, 1287-1325.


\bibitem{GZ} P.F. Guan and X.W. Zhang, A class of curvature type equations, Pure Appl. Math. Q. 17 (2021), no. 3, 865-907. 

\bibitem{HL} F.R. Harvey and H.B. Lawson Jr., Calibrated geometries, Acta Math. 148 (1982), 47--157.


\bibitem{Heinz} E. Heinz,
On elliptic Monge-Amp\`ere equations and Weyl's embedding problem, J. Analyse Math. 7 (1959), 1-52.

\bibitem{HZ} H. Hong and R.J. Zhang, Curvature estimates for semi-convex solutions of the asymptotic Plateau problem in $\mathbb{H}^{n+1}$, arxiv.org/abs/2408.09428.


\bibitem{HMW} Z.L. Hou, X.-N. Ma and D.M. Wu, A second order estimate for complex Hessian equations on a compact K\"ahler manifold, Math. Res. Lett. 17 (2010), 547-561. 

\bibitem{HS} G. Huisken and C. Sinestrari, Convexity estimates for mean curvature flow and singularities of mean convex surfaces, Acta. Math. 183 (1999), 45-70.


\bibitem{JT} F. Jiang and N. Trudinger, On Pogorelov estimates in optimal transportation and geometric optics, Bull. Math. Sci. 4 (2014), no. 3, 407-431.


\bibitem{JX} C. Jin and L. Xu, The Pogorelov type $C^2$ estimates for sum Hessian equations,
J. Differential Equations 453 (2026), part 2, Paper No. 113853, 23 pp.

\bibitem{Jo} K. J\"orgens,
\"Uber die L\"osungen der Differentialgleichung $rt-s^2=1$,
Math. Ann. 127 (1954), 130-134.


\bibitem{K} N. Korevaar, A priori interior gradient bounds for solutions to elliptic Weingarten equations, Ann. Inst. H. Poincar\'e Anal. Non Lin\'eaire 4 (1987), no. 5, 405–421.


\bibitem{Kr} N.V. Krylov, On the general notion of fully nonlinear second-order elliptic equations, Trans. Amer. Math. Soc. {\bf 347} (1995), no.~3, 857--895.

\bibitem{LRW2} C.H. Li, C.Y. Ren and Z.Z. Wang, Curvature estimates for convex solutions of some fully nonlinear Hessian-type equations, Calc. Var. Partial Differential Equations 58 (2019), no. 6, Paper No. 188, 32 pp. 
\bibitem{LRW} M. Li, C.Y. Ren and Z.Z. Wang,
An interior estimate for convex solutions and a rigidity theorem, J. Funct. Anal. 270 (2016), 2691-2714.

\bibitem{LR} P.F. Li, C.Y. Ren, Pogorelov type $C^2$ estimates for sum Hessian equations, arXiv:2504.06711.


\bibitem{YYLi} Y.Y. Li, Some existence results of fully nonlinear elliptic equations of Monge-Amp\`ere type, Comm. Pure Appl. Math. 43 (1990), 233-271.


\bibitem{LYZ} W. Liang, J. Yan and H. Zhu, The Pogorelov estimates for the sum Hessian equation with rigidity theorem and parabolic versions, arXiv:2412.11822.

\bibitem{Liu} J.K. Liu, Interior $C^2$ estimate for Monge-Amp\`ere equation in dimension two, Proc. Amer. Math. Soc. 149 (2021), no. 6, 2479-2486.


\bibitem{LiuT} J.K. Liu and N. Trudinger, On Pogorelov estimates for Monge-Amp\`ere type equations, Discrete Contin. Dyn. Syst. 28 (2010), no. 3, 1121-1135.



\bibitem{LR2} Y. Liu and C.Y. Ren, Pogorelov type $C^2$ estimates for sum Hessian equations and a rigidity theorem, J. Funct. Anal.  284 (2023), no. 1, Paper No. 109726, 32 pp.

\bibitem{LTr}
 M. Lin and N. Trudinger, On some inequalities for elementary symmetric functions, Bull. Austral. Math. Soc. 50:2 (1994), 317–326.


\bibitem{LT} S.Y. Lu and Y.-L. Tsai, A simple proof of curvature estimates for the $n-1$ Hessian equation,  Proc. Amer. Math. Soc. 154 (2026), no. 2, 893–904.


\bibitem{LT2} S.Y. Lu and Y.-L. Tsai, Pogorelov type interior $C^2$ estimate for Hessian quotient equation and its application, J. Reine Angew. Math. 831 (2026), 155-184.

\bibitem{Po1} A.V. Pogorelov, 
The regularity of the generalized solutions of the equation $\det(\partial^2 u/\partial x^i \partial x^j)=\varphi(x^1, x^2,\ldots, x^n)>0$,
Dokl. Akad. Nauk SSSR 200 (1971), 534-537.
 
\bibitem{Po2} A.V. Pogorelov, On the improper convex affine hyperspheres,
Geometriae Dedicata 1 (1972), no. 1, 33-46.

\bibitem{Po3} A.V. Pogorelov, The Minkowski multidimensional problem, Scripta Ser. Math., V. H. Winston \& Sons, Washington 1978. 

\bibitem{RW1} C.Y. Ren and Z.Z. Wang, On the curvature estimates for Hessian equations, Amer. J. Math. 141 (2019), 1281-1315.

\bibitem{RW2} C.Y. Ren and Z.Z. Wang, The global curvature estimate for the $n-2$ Hessian equation, Calc. Var. Partial Differential Equations 62 (2023), no. 9, Paper No. 239, 50 pp.


\bibitem{SY} R. Shankar and Y. Yuan, Rigidity for general semiconvex entire solutions to the sigma-2 equation, Duke Math. J. 171 (2022), no. 15, 3201–3214.

\bibitem{SY2} R. Shankar and Y. Yuan, Hessian estimates for the sigma-2 equation in dimension four, Ann. of Math. (2), 201 (2025), no. 2, 489-513.

\bibitem{SUW} W. Sheng, J. Urbas and X.-J. Wang, Interior curvature bounds for a class of curvature equations, Duke Math. J. 123 (2004), no. 2, 235-264.


\bibitem{TruW}  N. Trudinger and X.-J. Wang, The Monge-Amp\`ere equation and its geometric applications. Handbook of geometric analysis. No. 1, 467-524, Adv. Lect. Math. (ALM), 7, Int. Press, Somerville, MA, 2008. 

\bibitem{Tu} Q. Tu, Pogorelov type estimates for $(n-1)$-Hessian equations and related rigidity theorems, arXiv:2405.02939.


\bibitem{Warren} M. Warren, Nonpolynomial entire solutions to $\sigma_k$ equations, Comm. Partial
Differential Equations 41 (2016), no. 5, 848–853.


\bibitem{Yuan} Y. Yuan, A monotonicity approach to Pogorelov's Hessian estimates for Monge-Amp\`ere equation, Math. Eng. 5 (2023), no. 2, Paper No. 037.


\bibitem{Zhang} R.J. Zhang, $C^2$ estimates for $k$-Hessian equations and a rigidity theorem, Adv. Math. 480 (2025), part A, Paper No. 110488.



\end{thebibliography}
\end{document}